\newtheorem{teo}{Theorem}[section]
\newtheorem{lemma}{Lemma}[section]
\newtheorem{cor}{Corollary}[section]
\newtheorem{oss}{Remark}[section]
\newtheorem{prop}{Proposition}[section]
\newcommand{\ra}{\longrightarrow}
\newcommand{\dis}{\displaystyle}
\newcommand{\bs}{\backslash}
\newcommand{\ov}{\overline}
\newcommand{\R}{\mathbb{R}}
\newcommand{\ti}{\widetilde}
\newcommand{\N}{\mathbb{N}}
\newcommand{\eps}{\varepsilon}
\newcommand{\ph}{\varphi}
\newcommand{\D}[2]{ \frac{\partial #1}{ \partial #2}}
\newcommand{\rw}{\rightharpoonup}
\newenvironment{Si}[1]{\left\{\begin{array}{#1}}{\end{array} \right. }
\title{Extremal Functions for Singular Moser-Trudinger Embeddings}
\author{Stefano Iula\thanks{The authors are supported by the Swiss National Science Foundation project nr. PP00P2-144669.}\\ {\small Universit\"at Basel}\\ {\small \texttt{stefano.iula@unibas.ch} }\and Gabriele Mancini${}^*$ \thanks{The author is supported by SISSA and the PRIN project \emph{Variational and perturbative aspects of nonlinear differential problems.}}\\  {\small Universit\"at Basel}\\  {\small \texttt{gabriele.mancini@unibas.ch} }}
\date{}
\begin{document}
\maketitle

\begin{abstract}
We study Moser-Trudinger type functionals in the presence of singular potentials. In particular we propose a proof of a singular Carleson-Chang type estimate by means of Onofri's inequality for the unit disk in $\R^2$. Moreover we extend the analysis of \cite{AdDr} and \cite{Csa} considering Adimurthi-Druet type functionals on compact surfaces with conical singularities and discussing the existence of extremals for such functionals.
\end{abstract}

\section{Introduction}

Let $\Omega\subseteq \R^2$ be a bounded  domain, from the well known Sobolev's inequality 

\begin{equation}\label{Sobolev}
\|u\|_{L^\frac{2p}{2-p}(\Omega)} \le S_p \|\nabla u\| _{L^p(\Omega)}  \qquad  p \in ( 1,2) ,\; u\in W^{1,p}_0(\Omega),
\end{equation}

one can deduce that the Sobolev space $H^1_0(\Omega):=W^{1,2}_0(\Omega)$ is embedded into $L^q(\Omega)$ $\forall\; q\ge 1$. A much more precise result was proved in 1967 by Trudinger \cite{Trud}: on bounded subsets of $H^1_0(\Omega)$ one has  uniform exponential-type integrability. Specifically, there exists $\beta>0$ such that
\begin{equation}\label{Tru}
\sup_{u\in H^1_0(\Omega),\;\int_{\Omega}|\nabla u|^2 dx \le 1}\int_{\Omega} e^{\beta u^2} dx <+\infty.
\end{equation}
This inequality was later improved by Moser in \cite{mos}, who proved that the sharp exponent in \eqref{Tru} is $\beta = 4\pi$, that is 
\begin{equation}\label{MT1}
\sup_{u\in H^1_0(\Omega),\;\int_{\Omega}|\nabla u|^2 dx \le 1}\int_{\Omega} e^{4\pi u^2} dx <+\infty,
\end{equation}
and 
\begin{equation}\label{MT2}
\sup_{u\in H^1_0(\Omega),\;\int_{\Omega}|\nabla u|^2 dx \le 1}\int_{\Omega} e^{\beta u^2} dx =+\infty
\end{equation}
for $\beta >4\pi$. 
An interesting question consists in studying the existence of extremal functions for \eqref{MT1}. Indeed, while there is no function realizing equality in \eqref{Sobolev}, one can prove that the supremum in \eqref{MT1} is always attained. This was  proved in \cite{CC} by Carleson and Chang for the unit disk  $D\subseteq \R^2$, and by Flucher (\cite{Flu}) for  arbitrary bounded domains  (see also \cite{stru} and \cite{Lin}). 
The proof of these results is based on a concentration-compactness  alternative stated by P. L. Lions (\cite{Lions}): for a sequence $u_n\in H^1_0(\Omega)$ such that $\|\nabla u_n\|_{L^2(\Omega)}=1$  one has, up to subsequences, either 
$$
\int_{\Omega} e^{4\pi u_n^2} dx \rightarrow \int_{\Omega} e^{4\pi u^2} dx
$$
where $u $ is the weak limit of $u_n$, or $u_n$ concentrates in a point  $x\in \ov{\Omega}$, that is
\begin{equation}\label{con}
|\nabla u|^2dx \rw \delta_x \qquad \mbox{ and }\qquad u_n\rw 0.
\end{equation}
The key step in \cite{CC} consists in proving that if a  sequence of radially symmetric functions $u_n\in H^1_0(D)$  concentrates at 0,  then
\begin{equation}\label{CC}
\limsup_{n\to \infty} \int_{D} e^{4\pi u_n^2} dx\le \pi(1+ e).
\end{equation}
Since for the unit disk the supremum in  \eqref{MT1} is strictly greater than $\pi(1+e)$, one can exclude concentration for maximizing sequences by means of \eqref{CC} and therefore prove existence of extremal functions for \eqref{MT1}. In \cite{Flu} Flucher observed that concentration at arbitrary points of a general domain $\Omega$ can always  be reduced, through properly defined rearrangements, to concentration of radially symmetric functions on the unit disk. In particular he proved that if $u_n\in H^1_0(\Omega)$ satisfies $\| \nabla u_n\|_2=1$ and 
\eqref{con}, then
\begin{equation}\label{Flu}
\limsup_{n\to \infty} \int_{\Omega} e^{4\pi u_n^2} dx\le \pi e^{1+4\pi A_\Omega(x)} + |\Omega|.
\end{equation}
where $A_\Omega(x)$ is the Robin function of $\Omega$, that is the trace of the regular part of the Green function of $\Omega$. He also proved 
\begin{equation}
\sup_{u\in H^1_0(\Omega),\int_{\Omega}|\nabla u|^2 dx\le 1} \int_{\Omega} e^{ 4\pi u^2} dx   >\pi e^{1+4\pi \max_{\ov{\Omega}}A_\Omega}+|\Omega|, 
\end{equation}
which implies the existence of extremals for \eqref{MT1} on $\Omega$. Similar results hold if $\Omega$ is replaced by a smooth closed surface $(\Sigma,g)$.  Let us denote
$$
\mathcal{H} := \left\{ u\in H^1(\Sigma)\;:\; \int_{\Sigma}|\nabla u|^2 dv_g \le 1,\; \int_{\Sigma} u \; dv_g =0\right\}.
$$
Fontana \cite{fontana} proved that 
\begin{equation}\label{MT3}
\sup_{u\in \mathcal{H}}\int_{\Sigma} e^{4\pi u^2} dv_g <+\infty
\end{equation}
and 
\begin{equation}\label{MT4}
\sup_{u\in \mathcal{H}}\int_{\Sigma} e^{\beta u^2} dv_g =+\infty
\end{equation}
$\forall\; \beta>4\pi$. Existence of extremal functions for \eqref{MT3} was proved in \cite{Li1} (see also \cite{Li3}, \cite{Li2}), again by excluding concentration for maximizing sequences.

In this paper we are interested in  Moser-Trudinger type inequalities in the presence of singular potentials.  The simplest example is given by the singular metric $|x|^{2\alpha} |dx|^2$ on a bounded domain $\Omega \subset \R^2$ containing the origin. In  \cite{adim} Adimurthi and Sandeep observed that $\forall\; \alpha\in (-1,0]$,
\begin{equation}\label{Adi}
\sup_{u\in H^1_0(\Omega),\int_{\Omega}|\nabla u|^2 dx\le 1}\int_{\Omega} |x|^{2\alpha}  e^{4\pi (1+\alpha) u^2} dx<+\infty,
\end{equation}
and 
\begin{equation}\label{Adi2}
\sup_{u\in H^1_0(\Omega),\int_{\Omega}|\nabla u|^2 dx\le 1}\int_{\Omega} |x|^{2\alpha}  e^{\beta u^2} dx=+\infty, 
\end{equation}
for any $\beta >4\pi(1+\alpha)$. Existence of extremals for \eqref{Adi} has recently been proved in \cite{Csa2} and \cite{Csa}. The strategy is similar to the one used for the case $\alpha=0$. One can exclude concentration for maximizing sequences using the following estimate, which can be obtained from \eqref{CC} using  a simple change of variables (see  \cite{adim}, \cite{Csa}).
\begin{teo}\label{teo 1}
Let $u_n\in H^1_0(D)$ be such that $\int_{D}|\nabla u_n|^2 dx\le 1$ and $u_n \rw 0$ in $H^1_0(D)$, then $\forall \; \alpha \in (-1,0]$ we have
\begin{equation}\label{CC gen}
\limsup_{n\to \infty} \int_{D} |x|^{2\alpha} e^{4\pi (1+\alpha)u_n^2} dx \le \frac{\pi (1+e)}{1+\alpha}.
\end{equation}
\end{teo}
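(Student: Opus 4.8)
The plan is to deduce \eqref{CC gen} from the non-singular Carleson--Chang estimate \eqref{CC} by combining a symmetrization with the radial change of variables $r\mapsto r^{1/(1+\alpha)}$. Replacing $u_n$ by $|u_n|$ --- which alters neither $\int_D|x|^{2\alpha}e^{4\pi(1+\alpha)u_n^2}\,dx$ nor $\int_D|\nabla u_n|^2\,dx$, and preserves $u_n\rw0$ in $H^1_0(D)$, since this convergence forces $u_n\to0$ in $L^2(D)$ by Rellich's theorem --- we may assume $u_n\ge0$. Let $u_n^*$ denote the radially symmetric decreasing rearrangement of $u_n$ in $D$. By the P\'olya--Szeg\H{o} inequality $\int_D|\nabla u_n^*|^2\,dx\le\int_D|\nabla u_n|^2\,dx\le1$, and since $\|u_n^*\|_{L^2(D)}=\|u_n\|_{L^2(D)}\to0$ while $\{u_n^*\}$ is bounded in $H^1_0(D)$, also $u_n^*\rw0$ in $H^1_0(D)$. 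As $\alpha\le0$, the weight $|x|^{2\alpha}$ is radially symmetric and non-increasing, hence equal to its own rearrangement, and as $t\mapsto e^{4\pi(1+\alpha)t^2}$ is non-decreasing on $[0,\infty)$, the rearrangement of $e^{4\pi(1+\alpha)u_n^2}$ is $e^{4\pi(1+\alpha)(u_n^*)^2}$; the Hardy--Littlewood inequality then gives
\[
\int_D|x|^{2\alpha}e^{4\pi(1+\alpha)u_n^2}\,dx\le\int_D|x|^{2\alpha}e^{4\pi(1+\alpha)(u_n^*)^2}\,dx ,
\]
so it is enough to prove \eqref{CC gen} for $u_n^*$. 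From now on I therefore assume $u_n$ radially symmetric and decreasing, and write $u_n(x)=w_n(|x|)$.

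The core step is the change of variables. Set $v_n:=\sqrt{1+\alpha}\,u_n\circ\Phi$, where $\Phi(y)=y\,|y|^{-\alpha/(1+\alpha)}$; equivalently $v_n(y)=\sqrt{1+\alpha}\,w_n\bigl(|y|^{1/(1+\alpha)}\bigr)$. Since $1/(1+\alpha)\ge1$, $\Phi$ is a homeomorphism of $\overline{D}$ onto itself, smooth away from the origin, so $v_n\in H^1_0(D)$. A direct computation in polar coordinates, using the substitution $r=s^{1/(1+\alpha)}$, shows that the Jacobian and weight factors combine to give
\[
\int_D|\nabla v_n|^2\,dy=2\pi\int_0^1 r\,w_n'(r)^2\,dr=\int_D|\nabla u_n|^2\,dx\le1 ,
\]
while, because the exponent of $s$ in the transformed integral equals $1$ and $v_n(y)^2=(1+\alpha)\,w_n\bigl(|y|^{1/(1+\alpha)}\bigr)^2$ by construction,
\[
\int_D|x|^{2\alpha}e^{4\pi(1+\alpha)u_n^2}\,dx=\frac1{1+\alpha}\int_D e^{4\pi v_n^2}\,dy .
\]
Moreover $v_n$ is radially symmetric, and from $w_n\to0$ in $L^2\bigl((0,1),r\,dr\bigr)$ (a consequence of $u_n\to0$ in $L^2(D)$), hence $w_n\to0$ a.e.\ along a subsequence, together with the bound on $\|\nabla v_n\|_{L^2(D)}$, one gets $v_n\rw0$ in $H^1_0(D)$.

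It remains to bound $\limsup_n\int_D e^{4\pi v_n^2}\,dy$. By Lions' concentration-compactness alternative (cf.\ \eqref{con}), applied after normalizing $v_n$ to unit Dirichlet energy when needed (a step that can only increase the integral in question), up to a subsequence either $v_n$ does not concentrate and $\int_D e^{4\pi v_n^2}\,dy\to\int_D e^{0}\,dy=\pi$, or $v_n$ concentrates; being radially symmetric, $|\nabla v_n|^2\,dy$ is a rotation-invariant measure, so the concentration point must be the origin, and \eqref{CC} yields $\limsup_n\int_D e^{4\pi v_n^2}\,dy\le\pi(1+e)$. In both cases $\limsup_n\int_D e^{4\pi v_n^2}\,dy\le\pi(1+e)$, so, using the identity above and passing from the extracted subsequence back to the whole sequence in the usual way,
\[
\limsup_{n\to\infty}\int_D|x|^{2\alpha}e^{4\pi(1+\alpha)u_n^2}\,dx=\frac1{1+\alpha}\limsup_{n\to\infty}\int_D e^{4\pi v_n^2}\,dy\le\frac{\pi(1+e)}{1+\alpha} ,
\]
which is \eqref{CC gen}. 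The one genuinely delicate point is the change of variables: one must verify simultaneously that $\Phi$ leaves the Dirichlet energy unchanged (a feature special to dimension two and to this exact exponent), that it converts the singular integral into the ordinary Moser--Trudinger integral up to the constant $(1+\alpha)^{-1}$, and that the transformed sequence $v_n$ still meets the hypotheses under which \eqref{CC} is available.
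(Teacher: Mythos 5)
Your proof is correct and reaches the right conclusion, but by a genuinely different route from the paper's. The two outer reductions --- passing to $|u_n|$ and then to the symmetric decreasing rearrangement via P\'olya--Szeg\H{o} and Hardy--Littlewood (\eqref{dec}), and the change of variables $v_n(y)=\sqrt{1+\alpha}\,u_n(|y|^{1/(1+\alpha)})$ that removes the singularity --- are exactly the ones the paper uses (proof of Proposition~\ref{CC radial} and of Theorem~\ref{teo 1}). The difference is the final step: you handle the radial $\alpha=0$ case by combining Lions' concentration-compactness dichotomy with the Carleson--Chang estimate \eqref{CC}, treated as a citation to \cite{CC}. The paper, by contrast, re-proves \eqref{CC} from scratch: it deduces a rescaled Onofri inequality (Corollary~\ref{cor CC}) from Proposition~\ref{OnofrisuD}, establishes the decay bound of Lemma~\ref{decay}, constructs a carefully chosen truncation radius $\delta_n$ in Lemma~\ref{raggio CC} using a fixed comparison function with critical growth $\frac{1}{|x|^2\log^2|x|}$ near the origin, and combines these in Lemma~\ref{disketto} to bound the inner integral by $\pi e$. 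That re-derivation of \eqref{CC} from Onofri is the advertised novelty of Section~\ref{sez3}; your proof instead reproduces the ``standard'' deduction of \eqref{CC gen} from \eqref{CC} that the introduction already attributes to \cite{adim} and \cite{Csa}.

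One technical point deserves more care: you invoke Lions' dichotomy ``after normalizing $v_n$ to unit Dirichlet energy when needed.'' If $\|\nabla v_n\|_{L^2}\to\sigma\in(0,1]$ along a subsequence, the normalized $\tilde v_n=v_n/\|\nabla v_n\|_{L^2}$ still converges weakly to $0$ and your reasoning is fine. But if $\|\nabla v_n\|_{L^2}\to 0$, there is no reason for $\tilde v_n$ to have weak limit $0$, and Lions' first alternative would only give $\int_D e^{4\pi\tilde v_n^2}\,dy\to\int_D e^{4\pi\tilde v^2}\,dy$ for some possibly nonzero $\tilde v$, which does not by itself yield the claimed bound. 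That regime must be treated separately: once $\|\nabla v_n\|_{L^2}^2<1/q$ for some fixed $q>1$, the Moser--Trudinger inequality shows $e^{4\pi v_n^2}$ is uniformly bounded in $L^q(D)$, and since $v_n\to 0$ in $L^2(D)$ gives $e^{4\pi v_n^2}\to 1$ a.e., Vitali's theorem yields $\int_D e^{4\pi v_n^2}\,dy\to\pi$ without Lions at all. This is easily inserted, so I would not call it a genuine gap, but as written the argument skips a case.
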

In the first part of this work we will give a simplified version of the argument in \cite{CC} and show that \eqref{CC} (and therefore \eqref{CC gen}) can be deduced from  Onofri's inequality for the unit disk.
\begin{prop}[See \cite{onofri}, \cite{Beck}]
\label{OnofrisuD}
For any $u\in H^1_0(D)$ we have
\begin{equation}\label{Onofri Disco}
\log\left(\frac{1}{\pi}\int_{D}  e^{u}dx\right) \le \frac{1}{16\pi}\int_{D} |\nabla u|^2 dx +1.
\end{equation}
\end{prop}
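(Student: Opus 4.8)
The plan is to deduce \eqref{Onofri Disco} from a sharp one-dimensional inequality obtained after symmetrization. First I would reduce to radial functions: replacing $u$ by $|u|$ leaves $\int_D|\nabla u|^2\,dx$ unchanged and does not decrease $\int_D e^u\,dx$, so we may take $u\ge0$; passing then to the decreasing rearrangement $u^*$, equimeasurability gives $\int_D e^{u^*}\,dx=\int_D e^u\,dx$ while the P\'olya--Szeg\"o inequality gives $\int_D|\nabla u^*|^2\,dx\le\int_D|\nabla u|^2\,dx$. Hence it suffices to prove \eqref{Onofri Disco} for radial nonincreasing $u=u(r)$ with $u(1)=0$. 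With the substitution $t=\log(1/r^2)$, i.e.\ $r=e^{-t/2}$, and $w(t):=u(r)$, one computes $\int_D|\nabla u|^2\,dx=4\pi\int_0^\infty\dot w^2\,dt$ and $\frac1\pi\int_D e^u\,dx=\int_0^\infty e^{w(t)-t}\,dt$, with $w(0)=0$, so \eqref{Onofri Disco} becomes the one-dimensional statement
\[
\int_0^\infty e^{w(t)-t}\,dt\ \le\ e^{\,1+\frac14\int_0^\infty\dot w(t)^2\,dt}\qquad\text{for all }w\text{ with }w(0)=0,\ \dot w\in L^2(0,\infty).
\]

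To prove this I would maximize $Q(w):=\int_0^\infty e^{w-t}\,dt$ over $K_E:=\{w:\ w(0)=0,\ \int_0^\infty\dot w^2\,dt\le E\}$. The Cauchy--Schwarz bound $|w(t)|=\bigl|\int_0^t\dot w\,ds\bigr|\le\sqrt{tE}$ gives the uniform estimate $e^{w(t)-t}\le e^{\sqrt{tE}-t}\in L^1(0,\infty)$, which shows that $Q$ is finite on $K_E$ and, together with the weak compactness of bounded sets of $L^2$-gradients, that the supremum $Q_{\max}(E)$ is attained. On a maximizer $w$ the constraint is active (otherwise $\delta Q=0$ would force $e^{w-t}\equiv0$), and the Lagrange condition reads $\ddot w=-\mu\,e^{w-t}$ with $w(0)=0$ and $\dot w(\infty)=0$; the sign $\mu>0$ follows by differentiating $\tau\mapsto Q(\tau w)$ at $\tau=1^-$. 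Writing $z:=w-t$ this is the Liouville-type equation $\ddot z=-\mu e^z$, which I would integrate explicitly via the first integral $\dot z^2=1-2\mu e^z$ to find that the admissible critical points form exactly the one-parameter family $w_a(t)=2\log\frac{1+a}{1+ae^{-t}}$, $a>0$. A direct computation gives $\int_0^\infty\dot w_a^2\,dt=4\bigl(\log(1+a)+\frac{1}{1+a}-1\bigr)$ and $Q(w_a)=1+a$; since $a\mapsto\int_0^\infty\dot w_a^2\,dt$ is a bijection of $(0,\infty)$ onto itself, the maximizer in $K_E$ is the unique $w_a$ with that energy, whence $\log Q_{\max}(E)=\log(1+a)=\frac E4+1-\frac{1}{1+a}<\frac E4+1$, which is precisely the claimed inequality. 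The strict sign also explains the additive constant: the extremal profiles escape to infinity as $a\to\infty$, so \eqref{Onofri Disco} is sharp but not attained on $H^1_0(D)$.

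The main obstacle I expect is the one-dimensional maximization step — proving that $Q_{\max}(E)$ is attained (for which the $L^1$-domination above is exactly what is needed) and pinning down the correct solution of the Euler--Lagrange equation, since for a fixed multiplier $\mu<\frac12$ the equation $\ddot z=-\mu e^z$ admits two admissible profiles $w_a$ and $w_{1/a}$, and it is only the energy constraint, monotone along the family, that selects the right one. An alternative would be to transplant the Onofri inequality on $S^2$ via stereographic projection, but the natural even reflection of $u\in H^1_0(D)$ across the equator doubles the Dirichlet energy and produces the constant $\frac1{8\pi}$ in place of $\frac1{16\pi}$; repairing this requires a less naive extension, so the self-contained one-dimensional argument above seems preferable.
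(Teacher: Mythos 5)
Your proof is correct, but it takes a genuinely different route from the paper's. The paper transplants Onofri's inequality on $S^2$ to the disk via stereographic projection: for $u\in H^1_0(D_r)$ it defines $T_ru = u\circ\pi - u_0\circ\pi$ on $\pi^{-1}(D_r)$ (with $u_0$ the conformal factor of the stereographic metric) and a matching constant on the complementary cap, applies Proposition \ref{Onofri}, and lets $r\to\infty$. Your observation that the naive even reflection of $u\in H^1_0(D)$ would double the Dirichlet energy is precisely why the paper subtracts $u_0$ rather than reflecting; the subtraction rebalances the energy so that the correct constant $\tfrac{1}{16\pi}$ survives. Your argument instead symmetrizes to the radial decreasing case via $u\mapsto u^*$, performs the classical Moser substitution $t=\log(1/r^2)$ to reduce to the one-dimensional functional $\int_0^\infty e^{w-t}\,dt$ under the constraint $\int_0^\infty\dot w^2\,dt\le E$, and solves the constrained problem explicitly by integrating the Liouville ODE $\ddot z=-\mu e^z$ with the first integral $\dot z^2=1-2\mu e^z$. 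Both computations you report check out: $Q(w_a)=1+a$, $\int_0^\infty\dot w_a^2\,dt=4\bigl(\log(1+a)+\tfrac{1}{1+a}-1\bigr)$, the multiplier is $\mu=\tfrac{2a}{(1+a)^2}$ (invariant under $a\leftrightarrow 1/a$, as you note), and the energy is a strictly increasing bijection of $(0,\infty)$, which correctly resolves the two-branch ambiguity. Your route has the merit of being self-contained --- it does not invoke the $S^2$ Onofri inequality --- and it is essentially Carleson--Chang's original mechanism (rearrangement plus the change of variables $t=\log(1/r^2)$), which has a small irony here since the paper's stated aim in Section \ref{sez3} is to bypass exactly that computation by using Onofri on the sphere instead. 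The paper's route, by contrast, is shorter once Proposition \ref{Onofri} is granted, and makes the conformal geometry transparent (including why the sharp constant is $1$ and why it is asymptotically attained by bubble profiles). The one place you should expand if this were written up fully is the compactness argument for the 1D maximizer: your $L^1$-dominant $e^{\sqrt{tE}-t}$ combined with pointwise convergence of $w_n(t)=\int_0^t\dot w_n$ under weak $L^2$ convergence of $\dot w_n$ does the job, but it deserves the sentence spelling this out.
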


\medskip

Theorem \ref{teo 1} can be used to prove existence of extremals for several generalized versions of \eqref{MT1}. In  \cite{AdDr}  Adimurthi and Druet  proved that
\begin{equation}\label{AdDr}
\sup_{u\in H^1_0(\Omega),\int_{\Omega}|\nabla u|^2 dx \le 1} \int_{\Omega}  e^{4\pi u^2(1+\lambda\|u\|_{L^2(\Omega)}^2)} dx <+\infty
\end{equation}
for any $\lambda<\lambda(\Omega)$, where $\lambda(\Omega)$ is the first eigenvalue of $-\Delta$ with respect to Dirichlet boundary conditions.   This bound on $\lambda$ is sharp, that is
\begin{equation}\label{AdDr2}
\sup_{u\in H^1_0(\Omega),\int_{\Omega}|\nabla u|^2 dx \le 1} \int_{\Omega}  e^{4\pi u^2(1+\lambda(\Omega)\|u\|_{L^2(\Omega)}^2)} dx =\infty. 
\end{equation}
Existence of extremal functions for sufficiently small $\lambda$ for this improved inequality has been proved in \cite{LuYang} and \cite{Yang-C}. Similar results hold for compact surfaces on the space $\mathcal{H}$.
We refer to \cite{Tint}, \cite{Yang} and references therein for further improved inequalities. 

\medskip

In this work we will focus on Adimurthi-Druet type inequalities on  compact surfaces with conical singularities. Given a smooth, closed Riemannian surface $(\Sigma,g)$, and a finite number of  points $p_1,\ldots,p_m\in \Sigma$  we will consider  functionals of the form
\begin{equation}\label{E}
E^{\beta,\lambda,q}_{\Sigma,h}(u):= \int_{\Sigma} h e^{ \beta u^2 (1+\lambda\|u\|_{L^q(\Sigma,g)}^2)} dv_g
\end{equation}
where $\lambda,\beta\ge 0$, $q> 1$ and $h\in C^1(\Sigma \bs \{p_1,\ldots,p_m\})$ is a positive function satisfying
\begin{equation}\label{hh}
h(x)\approx d(x,p_i)^{2\alpha_i} \qquad  \mbox{ with }\quad \alpha_i>-1 \; \mbox{ near }\; p_i   \quad i=1,\ldots,m.
\end{equation}
One of the main  motivations for the choice of these singular weights comes indeed from the study of surfaces with conical singularities.  We recall that a smooth metric $\ov{g}$ on  $\Sigma\bs \{p_1,\ldots,p_m\}$ is said to have conical singularities of order $\alpha_1,\ldots,\alpha_m$ in $p_1,\ldots,p_m$ if $\ov{g}=h g$ with $g$ smooth metric on $\Sigma$ and $0<h\in C^{\infty}(\Sigma\bs\{p_1,\ldots,p_m\})$ satisfying  \eqref{hh}. Thus the functional \eqref{E} naturally appears in the analysis of Moser-Trudinger embeddings for the singular surface $(\Sigma, \ov g)$ (see \cite{troyanov}).  

If $m=0$ and $h\equiv 1$, $E_{\Sigma,1}^{\beta,\lambda ,q}$ corresponds to the functional studied in \cite{LuYang}. In particular, one has
\begin{equation}\label{LuY}
\sup_{u\in \mathcal{H}}  E^{4\pi,\lambda,q}_{\Sigma,1} < +\infty \quad \Longleftrightarrow \quad  \lambda<\lambda_{q}(\Sigma,g),
\end{equation}
where 
$$
\lambda_q(\Sigma,g):= \inf_{u\in \mathcal{H}} \frac{\int_{\Sigma}|\nabla u|^2dv_g}{\|u\|_{L^q(\Sigma,g)}^2}.
$$  
As it happens for \eqref{Adi}, if $h$ has singularities the critical exponent   becomes smaller.  More precisely, in \cite{troyanov} Troyanov (see also \cite{ChenMT}) proved that if $h$ is a positive function satisfying \eqref{hh}, then 
\begin{equation}\label{MT Troy}
\sup_{u\in \mathcal{H}} E_{\Sigma,h}^{\beta,0,q} <+\infty \qquad \Longleftrightarrow \qquad \beta\le 4\pi(1+\ov{\alpha}) 
\end{equation}
where  $\dis{\ov{\alpha}= \min\left\{0,\min_{1\le i\le m}\alpha_i\right\}}$. 
In this paper we combine \eqref{LuY} and \eqref{MT Troy}  obtaining the following singular version of \eqref{LuY}.

\begin{teo}\label{teo 3}
Let $(\Sigma,g)$ be a smooth, closed, surface. If $h\in C^1(\Sigma\bs\{p_1,\ldots,p_m\})$ is a positive function satisfying \eqref{hh}, then  $\forall\; \beta \in [0,4\pi(1+\ov \alpha)]\;$ and $\;  \lambda\in [0,\lambda_q(\Sigma,g))$ we have 
\begin{equation}\label{SUP}
\sup_{u\in \mathcal{H}} E_{\Sigma,h}^{\beta,\lambda,q}(u)<+\infty,
\end{equation}
and the supremum is attained if $\beta<4\pi(1+\ov{\alpha})$ or if $\beta = 4\pi (1+\ov \alpha)$ and $\lambda$ is sufficiently small. 
Moreover 
$$
\sup_{u\in \mathcal{H}} E_{\Sigma,h}^{\beta,\lambda,q}(u)=+\infty
$$
for $\beta> 4\pi (1+\ov{\alpha})$, or $\beta=4\pi(1+\ov{\alpha})$ and $\lambda>\lambda_q(\Sigma,g)$.
\end{teo}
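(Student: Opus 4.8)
\emph{Strategy.} I would treat the unboundedness statements first (they are the cheaper ones), then the finiteness of the supremum, and finally the existence of extremals; the threshold $\lambda_q(\Sigma,g)$ will be forced by the test functions in the first part and will reappear as the condition preventing blow-up in the others.

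\emph{The cases $\sup=+\infty$.} If $\beta>4\pi(1+\ov\alpha)$ this is immediate: since $\lambda\ge0$ and $u^2(1+\lambda\|u\|^2_{L^q(\Sigma,g)})\ge u^2$, one has $E^{\beta,\lambda,q}_{\Sigma,h}(u)\ge E^{\beta,0,q}_{\Sigma,h}(u)$ pointwise, and $\sup_{\mathcal H}E^{\beta,0,q}_{\Sigma,h}=+\infty$ by the sharpness part of \eqref{MT Troy}. When $\beta=4\pi(1+\ov\alpha)$ and $\lambda>\lambda_q(\Sigma,g)$ one must use the $\lambda$-term, and I would glue two profiles: a minimizer $w_0\in\mathcal H$ for $\lambda_q(\Sigma,g)$ (so $\|\nabla w_0\|_{L^2}^2=1$, $\|w_0\|^2_{L^q(\Sigma,g)}=1/\lambda_q(\Sigma,g)$, $w_0$ bounded by elliptic regularity) and a conically adapted Moser function $\varphi_\eps$ centred at a point $p_{i_0}$ with $\alpha_{i_0}=\min_i\alpha_i$ (any point if $\ov\alpha=0$), normalized so that $\|\nabla\varphi_\eps\|_{L^2}^2=1$, $\varphi_\eps\rw 0$ and $\varphi_\eps\to0$ in $L^q(\Sigma,g)$. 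For $s\in[0,1]$, after subtracting the mean and renormalizing the gradient (a $1+o(1)$ correction), set $u_{\eps,s}:=\sqrt s\,\varphi_\eps+\sqrt{1-s}\,w_0\in\mathcal H$; then $\|u_{\eps,s}\|^2_{L^q(\Sigma,g)}\to(1-s)/\lambda_q(\Sigma,g)$, while on the innermost ball, where $\varphi_\eps$ is a constant of size $\sim(-\log\eps)^{1/2}$, the exponent equals $2f(s)(-\log\eps)(1+o(1))$ with $f(s)=s\bigl(1+\lambda(1-s)/\lambda_q(\Sigma,g)\bigr)$. Since $f'(1)=1-\lambda/\lambda_q(\Sigma,g)<0$ there is $s$ with $f(s)>1\ge1+\ov\alpha$, and then the weighted mass of that ball, of order $\eps^{2+2\ov\alpha-2f(s)}$, diverges, so $E^{4\pi(1+\ov\alpha),\lambda,q}_{\Sigma,h}(u_{\eps,s})\to+\infty$.

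\emph{Finiteness of the supremum for $\beta\le4\pi(1+\ov\alpha)$, $\lambda<\lambda_q(\Sigma,g)$.} As $\beta\mapsto E^{\beta,\lambda,q}_{\Sigma,h}(u)$ is nondecreasing, it suffices to treat $\beta=4\pi(1+\ov\alpha)$. Argue by contradiction: let $u_n\in\mathcal H$ with $E^{\beta,\lambda,q}_{\Sigma,h}(u_n)\to+\infty$, and pass to a subsequence with $u_n\rw u_0$ in $H^1(\Sigma)$, $u_n\to u_0$ in $L^q(\Sigma,g)$ and a.e., $\|\nabla u_n\|_{L^2}^2\to t_0\le1$. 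If $u_0\ne0$ or $t_0<1$, put $v_n:=u_n-u_0$, so $\|\nabla v_n\|_{L^2}^2\to\tau:=t_0-\|\nabla u_0\|_{L^2}^2<1$; from $\|u_0\|^2_{L^q(\Sigma,g)}\le\|\nabla u_0\|_{L^2}^2/\lambda_q(\Sigma,g)\le(1-\tau)/\lambda_q(\Sigma,g)$ and $\lambda<\lambda_q(\Sigma,g)$ one gets $1+\lambda\|u_n\|^2_{L^q(\Sigma,g)}\le2-\tau+o(1)$. Using $u_n^2\le(1+\delta)v_n^2+C_\delta u_0^2$, localizing with cut-offs, applying the Moser--Trudinger inequality to $v_n$ on small balls, and pairing (H\"older) $e^{Cu_0^2}\in L^1(\Sigma)$ with the locally integrable weight $h$, one obtains equi-integrability of $h\,e^{\beta u_n^2(1+\lambda\|u_n\|^2_{L^q(\Sigma,g)})}$, because the exponent condition reduces to $(1+\ov\alpha)\,\tau(2-\tau)<1$, which holds since $\ov\alpha\le0$ and $\tau<1$ (and there is strict room). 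Hence $E^{\beta,\lambda,q}_{\Sigma,h}(u_n)\to E^{\beta,\lambda,q}_{\Sigma,h}(u_0)<+\infty$, a contradiction. The remaining case $u_0=0$, $t_0=1$ forces, by the Lions alternative, concentration of $|\nabla u_n|^2\,dv_g$ at a single point $x_0$; then $\|u_n\|_{L^q(\Sigma,g)}\to0$, the exponent is $4\pi(1+\ov\alpha)(1+o(1))u_n^2$, and in isothermal coordinates centred at $x_0$ (the conformal factor and the $C^1$ part of $h$ being absorbed into a constant) the local contribution is $\approx C_0\int_{D_r}|z|^{2\alpha_{x_0}}e^{4\pi(1+\ov\alpha)(1+o(1))u_n^2}\,dz$ with $\|\nabla u_n\|_{L^2(D_r)}^2\le1$, where $\alpha_{x_0}$ is the conical order at $x_0$ and $\alpha_{x_0}\ge\ov\alpha$. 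If $\alpha_{x_0}>\ov\alpha$ this is subcritical, hence bounded; if $\alpha_{x_0}=\ov\alpha$ it is bounded by a robust form of the singular Carleson--Chang estimate of Theorem \ref{teo 1}, the contribution of $\Sigma\bs D_r$ being bounded as well since $u_n\to0$ there in $H^1$. Again a contradiction, so $\sup_{\mathcal H}E^{\beta,\lambda,q}_{\Sigma,h}<+\infty$.

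\emph{Existence of extremals, and the main obstacle.} For $\beta<4\pi(1+\ov\alpha)$ take a maximizing sequence $u_n$; note $\sup_{\mathcal H}E^{\beta,\lambda,q}_{\Sigma,h}>E^{\beta,\lambda,q}_{\Sigma,h}(0)=\int_\Sigma h\,dv_g$, since small bumps strictly raise the functional. If $u_n$ concentrated we would be in the case $u_0=0$, $t_0=1$ above, but strict subcriticality ($4\pi(1+\ov\alpha)(1+o(1))<4\pi(1+\alpha_{x_0})$ eventually) gives equi-integrability even at $x_0$, so $E^{\beta,\lambda,q}_{\Sigma,h}(u_n)\to\int_\Sigma h\,dv_g<\sup$, contradicting maximality; hence $u_0\ne0$ and, by the weak continuity established above, $u_0$ is an extremal. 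For $\beta=4\pi(1+\ov\alpha)$ and $\lambda$ small I would run the Carleson--Chang--Flucher scheme: along a maximizing sequence, either there is no concentration and one concludes as above, or $|\nabla u_n|^2\,dv_g$ concentrates at some $x_0$ and the previous analysis, made quantitative, gives $\limsup_n E^{4\pi(1+\ov\alpha),\lambda,q}_{\Sigma,h}(u_n)\le\Lambda(\lambda)$ for an explicit $\Lambda(\lambda)$ depending on $\ov\alpha$, on the leading coefficient of $h$ at $x_0$ and on the Green function of $(\Sigma,g)$, with $\Lambda(\lambda)\to\Lambda(0)$ as $\lambda\to0$; one then constructs, as in \cite{Flu}, \cite{LuYang}, \cite{Csa}, an explicit competitor in $\mathcal H$ with energy above $\Lambda(\lambda)$ once $\lambda$ is small, which rules out concentration and produces an extremal. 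The hard part is exactly this critical case: turning the concentration analysis at a conical point into the sharp constant $\Lambda(\lambda)$ — correctly combining the singular weight $|z|^{2\ov\alpha}$, the conformal factor of $g$, and the lower-order term $\lambda\|u\|^2_{L^q(\Sigma,g)}$ — and then beating it with a test function; the bound $\lambda<\lambda_q(\Sigma,g)$ is precisely what prevents the perturbation from generating blow-up on its own, as the competitor of the first step shows.
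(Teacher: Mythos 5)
Your overall architecture is correct and mirrors the standard Flucher/Li/Lu--Yang strategy that the paper follows: handle the easy unboundedness cases by test functions, prove finiteness by a concentration-compactness alternative, and handle critical existence by a sharp upper bound under concentration plus a competitor that beats it. The unboundedness arguments are essentially the paper's (Lemma~\ref{infinito}), and your reduction $E^{\beta,\lambda,q}\ge E^{\beta,0,q}$ for $\beta>\ov\beta$ is a clean shortcut.

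Your route to the \emph{finiteness} of the supremum is a genuinely different organization from the paper's. You argue by contradiction and split into the Lions dichotomy directly, invoking Theorem~\ref{teo 1} on a small disk in the concentration case. The paper instead works with maximizers $u_n$ of subcritical functionals $E^{\beta_n,\lambda,q}_{\Sigma,h}$ with $\beta_n\nearrow\ov\beta$, carries out a full blow-up analysis, and gets finiteness as a byproduct of the explicit bound in Proposition~\ref{finale} (see Remark~\ref{ossmax}). Your version is conceptually tighter but needs more care than you give it: with $u_0=0$, $t_0=1$ and $\beta=\ov\beta$, the effective exponent is $b_n:=\ov\beta(1+\lambda\|u_n\|_q^2)>\ov\beta$ strictly, so a literal application of the Carleson--Chang bound at the critical exponent does not apply; you must absorb the $o(1)$ excess, which is nontrivial precisely because the Carleson--Chang constant is finite but the exponent is supercritical. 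The paper sidesteps this by working along $\beta_n<\ov\beta$. Also, the condition you state, $(1+\ov\alpha)\tau(2-\tau)<1$, is not what the Lions argument actually requires; the $(1+\ov\alpha)$ factors cancel and the correct condition is $\tau(2-\tau)<1$, though luckily both hold.

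The substantive gap is the critical existence case $\beta=\ov\beta$ with $\lambda$ small, which you explicitly defer as ``the hard part.'' This is not a side remark: it is the theorem's main content and occupies Sections~\ref{sez5} and~\ref{sez6} of the paper. Concretely, what is missing is: (a) the rescaling at the blow-up point $p$ and the classification of the limiting Liouville profile (regular or singular, depending on whether $|x_n|/r_n\to\infty$ or not, Proposition~\ref{rescaling}); (b) the identification $\alpha(p)=\ov\alpha$ and the sharp local bound via the scaled singular Onofri inequality (Corollary~\ref{cor CC sing}) together with the expansion of $G_p^\lambda$ near $p$ (Lemma~\ref{Grad Gr}), producing the explicit constant
$$
\limsup_{n\to\infty}E_n(u_n)\le\frac{\pi e}{1+\ov\alpha}\max_{p:\;\alpha(p)=\ov\alpha}K(p)e^{\ov\beta A_p^\lambda}+|\Sigma|_{g_h},
$$
where $A_p^\lambda$ is the constant in the expansion of the $\lambda$-perturbed Green's function \eqref{defGR}; and (c) the test-function family $w_\eps$ in \eqref{definizionewepsilon}, calibrated with the cutoff scale $\gamma_\eps\eps$, showing $E^{\ov\beta,\lambda,q}_{\Sigma,h}(u_\eps-\ov u_\eps)$ exceeds that bound with an error of order $c_\eps^{-2}$, together with the continuity of $A_p^\lambda$ in $\lambda$ (Lemma~\ref{as0}) which is what makes the coefficient of $c_\eps^{-2}$ positive for $\lambda$ small. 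None of these steps appears in your proposal beyond their names, so the proof as written does not establish the critical existence clause of the theorem.
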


Note that we do not treat the case $\beta= 4\pi (1+\ov{\alpha})$ and $\lambda= \lambda_q(\Sigma,g)$ (see Remark \ref{caso limite}).  In Theorem \ref{teo 3}, it is possible to replace $\|\cdot\|_{L^q(\Sigma,g)}$, $\lambda_q(\Sigma,g)$ and  $\mathcal{H}$, with  $\|\cdot\|_{L^q(\Sigma,g_h)}$,  $\lambda_q(\Sigma,g_h)$ and
$$
\mathcal{H}_h:=\left\{u\in H^1_0(\Sigma)\;:\; \int_{\Sigma}|\nabla_{g_h} u|^2 dv_{g_h}\leq 1,\; \int_{\Sigma} u \; dv_{g_h} =0\right\},
$$ 
where $g_h:=h g$. In particular we can extend the Adimurthi-Druet  inequality to  compact surfaces with conical singularities.

\begin{teo}\label{teo 4}
Let $(\Sigma,g)$ be a closed surface with conical singularities of order $\alpha_1,\ldots,\alpha_m>-1$ in $p_1,\ldots,p_m\in \Sigma$. Then for any  $0\le \lambda < \lambda_q(\Sigma,g)$ we have 
$$
\sup_{u\in \mathcal{H}} \int_{\Sigma} e^{4\pi(1+\ov{\alpha})u^2 (1+\lambda \|u\|^2_{L^q(\Sigma,g)} ) } dv_{g}<+\infty,
$$
and the supremum is attained for $\beta<4\pi(1+\ov{\alpha})$ or  for $\beta = 4\pi (1+\ov\alpha)$ and  sufficiently small $\lambda$. Moreover 
$$
\sup_{u\in \mathcal{H}} \int_{\Sigma} e^{\beta u^2 (1+\lambda \|u\|^2_{L^q(\Sigma,g)} ) } dv_{g}=+\infty,
$$
if $\beta>4\pi (1+\ov{\alpha})$ or $\beta= 4\pi (1+\ov{\alpha})$ and $\lambda>\lambda_q(\Sigma,g)$. 
\end{teo}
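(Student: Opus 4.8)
The plan is to obtain Theorem~\ref{teo 4} as a direct consequence of Theorem~\ref{teo 3} together with the remark following it, simply by reading the geometric statement about conical singularities back into the analytic language of the weighted functional \eqref{E}. First I would unwind the definition recalled in the Introduction: saying that $(\Sigma,g)$ carries conical singularities of order $\alpha_1,\ldots,\alpha_m$ at $p_1,\ldots,p_m$ means precisely that $g=h\,g_0$ for some smooth metric $g_0$ on $\Sigma$ and some $0<h\in C^{\infty}(\Sigma\bs\{p_1,\ldots,p_m\})$ satisfying $h(x)\approx d_{g_0}(x,p_i)^{2\alpha_i}$ near $p_i$, i.e.\ exactly condition \eqref{hh} with the distance built from $g_0$. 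Since any two smooth distances on $\Sigma$ are comparable, \eqref{hh} then holds for $d_{g_0}$ with the same exponents $\alpha_i$, and $h$ is in particular a positive function of class $C^1$ on $\Sigma\bs\{p_1,\ldots,p_m\}$. Hence the hypotheses of Theorem~\ref{teo 3} are met for the smooth surface $(\Sigma,g_0)$ and the weight $h$, and the quantity $\ov{\alpha}=\min\{0,\min_{1\le i\le m}\alpha_i\}$ appearing there is the same one in the statement of Theorem~\ref{teo 4}.

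Next I would rewrite the integral in Theorem~\ref{teo 4} in terms of \eqref{E}. Since $g=h\,g_0$, in dimension two one has $dv_g=h\,dv_{g_0}$, so for every admissible $u$
\[
\int_{\Sigma} e^{\beta u^2\left(1+\lambda\|u\|^2_{L^q(\Sigma,g)}\right)}\,dv_g=\int_{\Sigma} h\, e^{\beta u^2\left(1+\lambda\|u\|^2_{L^q(\Sigma,g)}\right)}\,dv_{g_0}=E^{\beta,\lambda,q}_{\Sigma,h}(u),
\]
where now the norm $\|\cdot\|_{L^q(\Sigma,g)}$, the eigenvalue $\lambda_q(\Sigma,g)$ and the constraint class are all taken with respect to $g=g_h:=h\,g_0$. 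This is exactly the object to which the ``$g_h$-version'' of Theorem~\ref{teo 3} stated in the Remark applies: the supremum over $\mathcal{H}=\mathcal{H}_h$ of $E^{4\pi(1+\ov{\alpha}),\lambda,q}_{\Sigma,h}$ is finite for $0\le\lambda<\lambda_q(\Sigma,g)$, it is attained when the exponent is subcritical ($\beta<4\pi(1+\ov{\alpha})$) or critical with $\lambda$ sufficiently small, and it equals $+\infty$ when $\beta>4\pi(1+\ov{\alpha})$, or $\beta=4\pi(1+\ov{\alpha})$ and $\lambda>\lambda_q(\Sigma,g)$. Translating these conclusions back through the identity above yields precisely all the assertions of Theorem~\ref{teo 4}.

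The point is that there is no genuinely new analysis to carry out here: once Theorem~\ref{teo 3} and its Remark are in hand, Theorem~\ref{teo 4} is a change of vocabulary. The only thing requiring care is the bookkeeping in the first step — checking that the ``order $\alpha_i$'' of the conical singularity is indeed the exponent entering \eqref{hh}, that the distance in \eqref{hh} may be replaced by $d_{g_0}$ up to harmless multiplicative constants, and that the mean-value condition defining $\mathcal{H}$ in Theorem~\ref{teo 4} is to be understood with respect to the singular metric $g$, so that it coincides with $\mathcal{H}_h$ and not with the $g_0$-based space. With these identifications the reduction is complete.
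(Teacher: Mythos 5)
Your reduction is correct and it is precisely the route the paper intends: the authors state right before Theorem~\ref{teo 4} that Theorem~\ref{teo 3} holds with $\|\cdot\|_{L^q(\Sigma,g)}$, $\lambda_q(\Sigma,g)$, $\mathcal{H}$ replaced by their $g_h$-counterparts, and then declare Theorem~\ref{teo 4} to follow, with the detailed argument omitted ("very similar to the one of Theorem~\ref{teo 3}"). Your write-up simply makes explicit the bookkeeping — $dv_g = h\,dv_{g_0}$, conformal invariance of the Dirichlet energy so that $\mathcal{H}$ taken with respect to the singular metric coincides with $\mathcal{H}_h$, and the identification $\lambda_q(\Sigma,g)=\lambda_q(\Sigma,g_h)$ — which is exactly what is implicitly being invoked; the only caveat worth keeping in mind is that the Remark's $g_h$-version of Theorem~\ref{teo 3} is itself asserted rather than proved in the paper and would require re-running the Section~\ref{sez5}--\ref{sez6} blow-up analysis with $g_h$-norms, but your proof is no more incomplete on this point than the paper's own.
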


As in \cite{Li1}, \cite{Yang} and \cite{LuYang}, our techniques can be adapted  to treat the case of compact surfaces with boundary.

\begin{teo}\label{teo5} Let $(\Sigma,g)$ be a smooth compact Riemannian surface with boundary. If $p_1,\ldots,p_m\in \Sigma \bs \partial\Sigma$ and $h\in C^1(\Sigma\bs\{p_1,\ldots,p_m\})$ satisfies \eqref{hh},  then $\forall\; \beta \in [0,4\pi(1+\ov{\alpha})]\;$ and $\;  \lambda\in [0,\lambda_q(\Sigma,g))$ we have 
$$
\sup_{u\in H^1_0(\Sigma), \int_{\Sigma}|\nabla u|^2 dv_g\le 1} E_{\Sigma,h}^{\beta,\lambda,q}(u)<+\infty 
$$
and the supremum is attained if $\beta<4\pi(1+\ov{\alpha})$ or if $\beta = 4\pi (1+\ov\alpha)$ and $\lambda$ is sufficiently small. Furthermore if $\beta> 4\pi (1+\ov{\alpha})$, or $\beta=4\pi(1+\ov{\alpha})$ and $\lambda\ge \lambda_q(\Sigma,g)$, we have 
$$
\sup_{u\in u\in H^1_0(\Sigma), \int_{\Sigma}|\nabla u|^2 dv_g\le 1} E_{\Sigma,h}^{\beta,\lambda,q}(u)=+\infty.
$$
\end{teo}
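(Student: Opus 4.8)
The plan is to reduce Theorem \ref{teo5} to the closed-surface case, Theorem \ref{teo 3}, by a doubling/reflection argument together with an auxiliary construction showing sharpness. First I would establish the finiteness of the supremum. Given a smooth compact surface $(\Sigma,g)$ with boundary, I would embed it isometrically (or at least conformally near $\partial\Sigma$) into a slightly larger smooth closed surface $\widetilde\Sigma$, and extend $h$ to a positive $C^1$ function $\widetilde h$ on $\widetilde\Sigma\bs\{p_1,\ldots,p_m\}$ still satisfying \eqref{hh} with the same exponents $\alpha_i$ at the same points (since the $p_i$ lie in the interior); the minimum $\ov\alpha$ is unchanged. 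For $u\in H^1_0(\Sigma)$ with $\int_\Sigma|\nabla u|^2dv_g\le 1$, extend $u$ by $0$ to all of $\widetilde\Sigma$; this extension lies in $H^1(\widetilde\Sigma)$ with the same Dirichlet energy, but it need not have zero mean. To fix this, replace $u$ by $\widetilde u := u - \bar u$ where $\bar u := \fint_{\widetilde\Sigma} u\, dv_g$; this only decreases the Dirichlet energy (which is translation invariant) so $\widetilde u\in\mathcal H$ up to the harmless normalization, and one controls $|\bar u|$ by $\|u\|_{L^1}$, hence (via the Moser-Trudinger bound already in hand, or a cruder Sobolev bound) by a universal constant. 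Then $E^{\beta,\lambda,q}_{\Sigma,h}(u)\le C\, E^{\beta',\lambda',q}_{\widetilde\Sigma,\widetilde h}(\widetilde u)$ for slightly adjusted $\beta',\lambda'$ absorbing the cross terms coming from $u^2=(\widetilde u+\bar u)^2$ and from $\|u\|_{L^q(\Sigma)}$ versus $\|\widetilde u\|_{L^q(\widetilde\Sigma)}$; choosing the enlargement and the cutoff carefully keeps $\beta'\le 4\pi(1+\ov\alpha)$ and $\lambda'<\lambda_q(\widetilde\Sigma,g)$, so Theorem \ref{teo 3} gives the bound.

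Next I would address the attainedness. Here the reflection trick is cleaner: double $\Sigma$ across its boundary to form the closed surface $\Sigma^\#=\Sigma\cup_{\partial\Sigma}\Sigma$, with the reflected metric $g^\#$ (Lipschitz across $\partial\Sigma$, which suffices for the Moser-Trudinger analysis, or one smooths it) and the symmetrically reflected weight $h^\#$. A function $u\in H^1_0(\Sigma)$ corresponds bijectively, by odd or even reflection, to a symmetric function on $\Sigma^\#$; I would use even reflection so that the reflected $u^\#$ is in $H^1(\Sigma^\#)$, and the constraint $\int_\Sigma u\,dv_g$ arbitrary translates to the subspace of symmetric functions, within which one works with the zero-mean normalization as before. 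A maximizing sequence for $E^{\beta,\lambda,q}_{\Sigma,h}$ then yields a symmetric maximizing sequence on $\Sigma^\#$ for the corresponding functional; running the concentration-compactness analysis developed for Theorem \ref{teo 3} on $\Sigma^\#$, one shows that blow-up, if it occurred, would happen at a single point, and by symmetry that point is either a genuine interior point of one copy (handled exactly as in the closed case via Theorem \ref{teo 1} / the singular Carleson-Chang estimate \eqref{CC gen}) or a boundary point of $\Sigma$ sitting on the reflection locus. The strict inequality of the "test function" lower bound against the concentration value $\frac{\pi(1+e)}{1+\ov\alpha}$-type threshold — which is the step that forces compactness — is obtained as in the closed case by inserting a suitably scaled Moser/Liouville bubble supported near a point of $\Sigma\bs\partial\Sigma$ where $h$ and the relevant Green's-function-type quantity are largest; since one has the full interior of $\Sigma$ to place the bubble, the construction goes through verbatim.

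For the sharpness (divergence) statements, I would exhibit explicit test functions. When $\beta>4\pi(1+\ov\alpha)$, take Moser-type functions concentrating at a singular point $p_i$ realizing $\ov\alpha$ (or at any interior point if $\ov\alpha=0$) — this is literally the construction behind \eqref{MT Troy} and \eqref{Adi2}, localized in $\Sigma$ since $p_i\notin\partial\Sigma$ — and compute that $E^{\beta,\lambda,q}_{\Sigma,h}\to\infty$; the zero-mean/boundary constraint is immaterial because the competitors are compactly supported in the interior. When $\beta=4\pi(1+\ov\alpha)$ and $\lambda\ge\lambda_q(\Sigma,g)$, one combines the near-extremal functions for the eigenvalue $\lambda_q$ with Moser functions as in \cite{AdDr} and \cite{LuYang}: along a minimizing sequence $v_k$ for the Rayleigh quotient defining $\lambda_q(\Sigma,g)$, the quantity $u^2(1+\lambda\|u\|_{L^q}^2)$ evaluated on a normalized $v_k$ plus a small bubble exceeds $4\pi(1+\ov\alpha)u^2/(1+o(1))\cdot\|\nabla u\|^{-2}$ enough to defeat the Troyanov threshold, forcing divergence.

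The main obstacle I expect is bookkeeping in the first part: when extending by zero and re-centering to land in $\mathcal H$ on the enlarged closed surface, the passage from $u^2$ to $(u-\bar u)^2 = u^2 - 2\bar u\, u + \bar u^2$ and from the $L^q$-norm on $\Sigma$ to the one on $\widetilde\Sigma$ produces lower-order terms that must be absorbed without crossing the critical thresholds $4\pi(1+\ov\alpha)$ and $\lambda_q$. Controlling $\bar u$ uniformly (it is small but nonzero) and showing that the induced perturbation of $\beta$ and $\lambda$ can be taken arbitrarily small — so that criticality is preserved in the limit — requires the a priori control on $\|u\|_{L^1}$ and $\|u\|_{L^2}$ that itself follows from the already-established subcritical Moser-Trudinger bound; making this circularity-free (use a crude fixed bound first, then bootstrap) is the delicate point. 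Everything else is a transcription of the closed-surface arguments, as the authors indicate by citing \cite{Li1}, \cite{Yang} and \cite{LuYang}.
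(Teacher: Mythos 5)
The paper does not spell out a proof of Theorem \ref{teo5}; it explicitly says the argument is a direct adaptation of the proof of Theorem \ref{teo 3} (subcritical existence, blow-up analysis around the interior concentration point with the \emph{Dirichlet} Green's function replacing $G_p^\lambda$, and test functions supported in $\Sigma\setminus\partial\Sigma$), citing \cite{Li1}, \cite{Yang}, \cite{LuYang} for precedents in the $H^1_0$ setting. Your proposal instead tries to reduce the boundary case to the closed case by extension/doubling, which is a genuinely different route. For the divergence statements your test functions are fine, and for the blow-up/attainedness discussion your observations are essentially a transcription of Section \ref{sez5}--\ref{sez6}. But the reduction step in the finiteness part has a real gap at the critical exponent $\beta=\ov\beta$.

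Concretely: after extending $u$ by zero to $\widetilde\Sigma$ and setting $\widetilde u=u-\bar u$, you must pass from $e^{\beta u^2(\cdots)}$ to an expression in $\widetilde u$. Writing $u^2=\widetilde u^2+2\bar u\,\widetilde u+\bar u^2$ and using Young's inequality $2\bar u\,\widetilde u\le\eps\,\widetilde u^2+\bar u^2/\eps$ necessarily replaces $\beta$ by $\beta(1+\eps)$, and at $\beta=\ov\beta$ this is strictly supercritical for every $\eps>0$ no matter how small $|\bar u|$ is, so Theorem \ref{teo 3} on $\widetilde\Sigma$ gives no bound. Completing the square is a tautology (the cross term is exactly what brings you back to $u$), so there is no way to ``absorb'' it. Odd reflection across $\partial\Sigma$ does give zero mean on $\Sigma^\#$ automatically, but after normalizing the doubled Dirichlet energy to $1$ the effective exponent on $\Sigma$ is $2\beta$, so the closed-surface bound only applies for $\beta\le\ov\beta/2$; even reflection preserves the exponent but reintroduces a nonzero mean and hence the same cross-term issue. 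In short, the reduction furnishes the subcritical case (where one may take $\eps>0$ at the cost of slightly increasing $\beta$, still below $\ov\beta$) but cannot reach $\beta=\ov\beta$, which is precisely the content of the theorem. The direct approach is in fact simpler here: in $H^1_0(\Sigma)$ there is no mean-zero constraint to track, the Euler--Lagrange equation and the Lagrange multiplier computation of Lemma \ref{equazione} go through verbatim with $c_n=0$, concentration is automatically at an interior point since $u_n\equiv 0$ on $\partial\Sigma$, and the role of $G_p^\lambda$ is taken by the Dirichlet Green's function with the analogous logarithmic expansion; the rest of Sections \ref{sez5}--\ref{sez6} then transfers word for word.
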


In particular, if $\Sigma = \ov{\Omega}$ is the closure of a    bounded domain in $\R^2$, Theorem \ref{teo5} gives the following generalization of the results in \cite{Flu}, \cite{AdDr}, \cite{Csa}. 

\begin{cor}
Let $\Omega \subseteq \R^2$ be a bounded domain. For any choice of $V\in C^1(\ov{\Omega})$, $V>0$, $\alpha_1,\ldots,\alpha_m>-1$,  $\; x_1,\ldots,x_m\in \Omega$, $q>1$ and $\lambda\in [0,\lambda_q(\Omega))$, the  supremum 
$$
\sup_{u\in H^{1}_0(\Omega),\; \int_{\Omega}|\nabla u|^2dx\le 1} \int_{\Omega}V(x) \prod_{i=1}^m|x-x_i|^{2\alpha_i}  e^{4\pi (1+\ov{\alpha}) u^2\left(1+\lambda\|u\|_{L^q(\Omega)}^2\right)} dx 
$$
is finite. Moreover it is attained if  $\lambda$ is sufficiently small. 
\end{cor}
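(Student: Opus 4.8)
The plan is to deduce the Corollary directly from Theorem \ref{teo5} by realizing the bounded domain $\Omega\subseteq\R^2$ as a compact surface with boundary carrying an appropriate singular weight. Set $\Sigma:=\overline{\Omega}$, which is a smooth compact Riemannian surface with boundary $\partial\Sigma=\partial\Omega$ when equipped with the flat metric $g=|dx|^2$ (if $\partial\Omega$ is not smooth one first works on a slightly enlarged smooth domain, or invokes the standard density/approximation remarks used throughout the paper; I will assume $\partial\Omega$ smooth as is customary). With the flat metric, $H^1_0(\Sigma)=H^1_0(\Omega)$, the condition $\int_\Sigma|\nabla u|^2\,dv_g\le 1$ is exactly $\int_\Omega|\nabla u|^2\,dx\le 1$, and $dv_g=dx$.

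Next I would choose the weight. Define $h(x):=V(x)\prod_{i=1}^m|x-x_i|^{2\alpha_i}$. Since $V\in C^1(\overline\Omega)$ with $V>0$ and the points $x_i\in\Omega$ are interior, $h\in C^1(\Sigma\bs\{x_1,\ldots,x_m\})$, $h>0$, and near each $p_i:=x_i$ one has $d(x,p_i)=|x-x_i|$ for the flat metric, so $h(x)\approx d(x,p_i)^{2\alpha_i}$ with $\alpha_i>-1$; that is, \eqref{hh} holds. With this choice,
\begin{equation*}
E^{\beta,\lambda,q}_{\Sigma,h}(u)=\int_\Omega V(x)\prod_{i=1}^m|x-x_i|^{2\alpha_i}e^{\beta u^2\left(1+\lambda\|u\|_{L^q(\Omega)}^2\right)}\,dx,
\end{equation*}
where $\|u\|_{L^q(\Sigma,g)}=\|u\|_{L^q(\Omega)}$. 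Taking $\beta=4\pi(1+\overline\alpha)$ with $\overline\alpha=\min\{0,\min_i\alpha_i\}$, and $\lambda\in[0,\lambda_q(\Omega))$ (noting $\lambda_q(\Sigma,g)=\lambda_q(\Omega)$ by definition of the Rayleigh-type quotient on $\mathcal{H}$, which here reduces to the Dirichlet setting), Theorem \ref{teo5} applies verbatim: the supremum over $\{u\in H^1_0(\Sigma):\int_\Sigma|\nabla u|^2\,dv_g\le 1\}$ of $E^{\beta,\lambda,q}_{\Sigma,h}(u)$ is finite, and it is attained provided $\lambda$ is sufficiently small (the ``$\beta=4\pi(1+\overline\alpha)$'' branch of the existence statement in Theorem \ref{teo5}).

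The only genuine point requiring care — and the one I expect to be the main obstacle — is the reconciliation of regularity hypotheses: Theorem \ref{teo5} is stated for a \emph{smooth} compact surface with boundary, whereas a general bounded domain $\Omega$ need not have smooth boundary. I would handle this exactly as the paper announces ("our techniques can be adapted to treat the case of compact surfaces with boundary", and the remark that arguments extend by density as in \cite{Li1},\cite{Yang},\cite{LuYang}): either restrict to Lipschitz/smooth $\partial\Omega$ and note the general case follows by exhausting $\Omega$ from inside by smooth domains and using the monotonicity of the supremum together with the uniform bound, or observe that the blow-up analysis underlying Theorem \ref{teo5} is purely local near interior concentration points $p_i$ and near the boundary only uses the capacity/Robin-function estimates that are insensitive to boundary smoothness. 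Everything else — the translation of norms, of the exponent, of the constant $\lambda_q$ — is a direct unwinding of definitions and involves no new analysis.
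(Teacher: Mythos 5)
Your proposal is correct and takes exactly the route the paper intends: recognize the corollary as the specialization of Theorem \ref{teo5} to $\Sigma=\overline\Omega$ with the flat metric $g=|dx|^2$ and weight $h(x)=V(x)\prod_{i=1}^m|x-x_i|^{2\alpha_i}$, which satisfies \eqref{hh}--\eqref{hhh} since near each $x_i$ one has $h/|x-x_i|^{2\alpha_i}=V\prod_{j\neq i}|x-x_j|^{2\alpha_j}\in C^1_+$, and observe that $\|\cdot\|_{L^q(\Sigma,g)}$, $\lambda_q$, and the constraint set all reduce to their Euclidean counterparts. Your additional caveat about boundary smoothness is sensible — the paper's corollary is stated for an arbitrary bounded domain while Theorem \ref{teo5} is stated for a smooth compact surface with boundary — but this is a gap in the paper's hypotheses rather than in your argument, and your proposed fixes (exhaustion by smooth domains, or noting that the blow-up analysis is localized at the interior points $x_i$) are the standard ones.
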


This paper is organized as follows. Section \ref{sez3} contains a simple proof of  Theorem \ref{teo 1}.  Theorem \ref{teo 3} will be proved  in the remaining three sections. In section \ref{sez4} we will state some useful lemmas and prove existence of extremals for $E^{\beta, \lambda,q}_{\Sigma,h}$ with $\beta<4\pi(1+\ov{\alpha})$. In Section \ref{sez5} we will deal with the blow-up analysis for maximizing sequences for the critical case $\beta=4\pi(1+\ov{\alpha})$ and  we will prove an estimate similar to \eqref{Flu}, which implies the finiteness of the supremum in \eqref{SUP}.  Finally, in Section \ref{sez6} we test the functionals with a properly defined family of functions and complete the proof Theorem \ref{teo 3}. In the Appendix we will discuss some Onofri-type inequalities. In particular we will show how to deduce \eqref{Onofri Disco} from the standard Onofri inquality on $S^2$ and discuss its extensions to singular disks.  The proof of Theorems \ref{teo 4} and \ref{teo5} is very similar to the one of Theorem \ref{teo 3}, hence it will not be discussed in this work.


\section{A Carleson-Chang type estimate.}\label{sez3}
In this section we will prove Theorem \ref{teo 1} by means of \eqref{Onofri Disco}. We will consider the space
$$
H:=\left\{u\in H^1_0(D)\;:\; \int_{D}|\nabla u|^2 dx\le 1\right\}
$$
and, for any $\alpha\in (-1,0]$, the functional 
$$
E_\alpha(u):= \int_{D} |x|^{2\alpha} e^{4\pi (1+\alpha) u^2 } dx.
$$
By \eqref{Adi} we have $\sup_{H} E_\alpha<+\infty$. For any $\delta>0$, we will denote with $D_\delta$ the disk with radius $\delta$ centered at 0.

\begin{oss}\label{scala}
 With a trivial change of variables, one immediately gets that if $\delta>0$ and  $u\in H^1_0(D_\delta)$ are such that $\int_{D_\delta}|\nabla u_n|^2 dx \le 1$, then
$$
\int_{D_\delta} |x|^{2\alpha} e^{4\pi (1+\alpha)u^2} dx  \le \delta^{2(1+\alpha)} \sup_{H} E_\alpha.
$$
\end{oss}

In order to control the values of the Moser-Trudinger functional on a small scale, we will need the following scaled version of \eqref{Onofri Disco}   (cfr. Lemma 1 in \cite{CC}). 

\begin{cor}\label{cor CC}
For any $\delta,\tau >0$ and $c\in \R$ we have 
$$
 \int_{D_\delta}  e^{c u} \, dx  \le  \pi e^{1+\frac{c^2 \tau}{16\pi}} \delta^{2}
$$
for any $u\in H^1_0(D_\delta)$ such that $\int_{D_\delta} |\nabla u|^2\, dx\le \tau$.
\end{cor}


As in the original proof in \cite{CC}, we will first assume $\alpha=0$ and work with radially symmetric functions. For this reason we introduce the spaces
$$
H^{1}_{0,rad}(D):=\left\{u\in H^1_0(D)\;:\; u \mbox { is radially symmetric and decreasing}\right\}.
$$
and
$$
H_{rad}:=H \cap H^1_{0,rad}(D).
$$ 

\medskip

Functions in $H_{rad}$ satisfy the following useful decay estimate.
\begin{lemma}\label{decay}
For any $u\in H_{rad}$ we have 
$$
u(x)^2\le -\frac{1}{2\pi}\left(1-\int_{D_{|x|}}|\nabla u|^2 dy\right) \log| x| \qquad \forall\; x\in D\bs\{0\}.
$$
\end{lemma}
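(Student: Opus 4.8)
The plan is to exploit the radial monotonicity and the fundamental theorem of calculus together with the Cauchy--Schwarz inequality. Write $u=u(r)$ with $r=|x|$, a nonnegative decreasing function with $u(1)=0$. For a fixed $x\in D\bs\{0\}$, set $\rho=|x|$. Since $u(1)=0$ we have
\[
u(\rho)=-\int_\rho^1 u'(r)\,dr = \int_\rho^1 |u'(r)|\,dr,
\]
using that $u$ is decreasing so $u'\le 0$. Now apply Cauchy--Schwarz with respect to the measure $\frac{dr}{r}$ on $(\rho,1)$:
\[
u(\rho) = \int_\rho^1 |u'(r)|\sqrt{r}\cdot\frac{dr}{\sqrt r}
\le \left(\int_\rho^1 |u'(r)|^2 r\,dr\right)^{1/2}\left(\int_\rho^1 \frac{dr}{r}\right)^{1/2}
= \left(\int_\rho^1 |u'(r)|^2 r\,dr\right)^{1/2}\left(-\log\rho\right)^{1/2}.
\]

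The second step is to rewrite the Dirichlet-type integral in polar coordinates. For a radial function, $\int_{D_\delta\bs D_\rho}|\nabla u|^2\,dy = 2\pi\int_\rho^\delta |u'(r)|^2 r\,dr$, so in particular
\[
\int_\rho^1 |u'(r)|^2 r\,dr = \frac{1}{2\pi}\int_{D\bs D_\rho}|\nabla u|^2\,dy
= \frac{1}{2\pi}\left(\int_D |\nabla u|^2\,dy - \int_{D_\rho}|\nabla u|^2\,dy\right)
\le \frac{1}{2\pi}\left(1-\int_{D_\rho}|\nabla u|^2\,dy\right),
\]
where the last inequality uses $u\in H_{rad}$, i.e. $\int_D|\nabla u|^2\,dy\le 1$. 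Squaring the bound on $u(\rho)$ and combining with this estimate yields
\[
u(x)^2 \le -\frac{1}{2\pi}\left(1-\int_{D_{|x|}}|\nabla u|^2\,dy\right)\log|x|,
\]
which is the claim.

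The only mild technical point is justifying the one-dimensional manipulations for a general $H^1$ (not $C^1$) radial function: one should argue that a radially symmetric decreasing $H^1_0(D)$ function is locally absolutely continuous in $r$ on $(0,1]$ with $u'\in L^2(r\,dr)$, so that the fundamental theorem of calculus and Fubini/polar coordinates apply; this is standard (approximate by smooth radial functions, or invoke the one-dimensional Sobolev embedding away from the origin). There is no real obstacle here — the heart of the matter is simply the Cauchy--Schwarz step above, which is the radial analogue of the logarithmic decay estimate used throughout Carleson--Chang type arguments.
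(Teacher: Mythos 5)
Your proof is correct and follows exactly the same route as the paper's: bound $u(\rho)$ by $\int_\rho^1|u'|\,dr$, apply Cauchy--Schwarz against the weight $dr/r$ to produce the $(-\log\rho)^{1/2}$ factor, identify $2\pi\int_\rho^1 r|u'|^2\,dr$ with the Dirichlet energy on the annulus, and finish with $\int_D|\nabla u|^2\le 1$. The closing remark on absolute continuity is a harmless technical addendum the paper leaves implicit.
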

\begin{proof}
We bound
\[
\begin{split}
|u(x)| &\le  \int_{|x|}^1 |u'(t)| dt \le \left(\int_{|x|}^1 t u'(t)^2 dt  \right)^\frac{1}{2} \left(-\log |x|\right)^\frac{1}{2}\\& 
\le\frac{1}{\sqrt{2\pi}} \left(\int_{D\bs D_{|x|}} |\nabla u|^2 dy\right)^\frac{1}{2}\left(-\log |x|\right)^\frac{1}{2} \\&
\le \frac{1}{\sqrt{2\pi}} \left(1-\int_{D_{|x|}} |\nabla u|^2 dy\right)^\frac{1}{2}\left(-\log |x|\right)^\frac{1}{2}.
\end{split}
\]
\end{proof}

On a sufficiently small scale, it is possible to control $E_0$ using only Corollary \ref{cor CC} Lemma \ref{decay} and Remark \ref{scala}.

\begin{lemma}\label{disketto}
If $u_n\in H_{rad}$ and $\delta_n\ra 0$  satisfy  
\begin{equation}
\int_{D_{\delta_n}}|\nabla u_n|^2 dx \ra 0,
\end{equation}
then 
$$
\limsup_{n\to \infty}
\int_{D_{\delta_n}} e^{4\pi u_n^2} dx \le  \pi e.
$$
\end{lemma}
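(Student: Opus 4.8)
The plan is to pass to the logarithmic radial variable on $D_{\delta_n}$ and reduce the estimate to the scaled Onofri inequality, Corollary \ref{cor CC}. Write $\tau_n:=\int_{D_{\delta_n}}|\nabla u_n|^2\,dx\to 0$ and $S_n:=\log(1/\delta_n)\to+\infty$, and parametrise $x\in D_{\delta_n}$ by $|x|=\delta_n e^{-t}$, $t\in[0,\infty)$. Setting $\phi_n(t):=u_n(\delta_n e^{-t})$ one has the standard identities
\[
\int_0^\infty \dot\phi_n(t)^2\,dt=\frac{\tau_n}{2\pi},\qquad \int_{D_{\delta_n}}e^{4\pi u_n^2}\,dx=2\pi\delta_n^2\int_0^\infty e^{4\pi\phi_n(t)^2-2t}\,dt,
\]
and I would put $c_n:=\phi_n(0)=u_n(\delta_n)$, $\psi_n:=\phi_n-c_n\ge 0$, so that $\psi_n(0)=0$, $\psi_n$ is nondecreasing and $\int_0^\infty\dot\psi_n^2=\tau_n/(2\pi)$.

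The first step is an elementary pointwise bound: by Cauchy--Schwarz $\psi_n(t)^2\le t\int_0^t\dot\psi_n^2\le\frac{\tau_n}{2\pi}t$, hence
\[
4\pi\phi_n(t)^2-2t=4\pi c_n^2+8\pi c_n\psi_n(t)+4\pi\psi_n(t)^2-2t\le 4\pi c_n^2+8\pi c_n\psi_n(t)-2(1-\tau_n)t .
\]
The point of this manipulation is that the quadratic term $4\pi\psi_n^2$ gets absorbed into the exponential decay factor (at the price of replacing $2t$ by $2(1-\tau_n)t$), leaving an exponent that is \emph{linear} in $\psi_n$ with coefficient exactly $8\pi c_n$ --- precisely the shape that Corollary \ref{cor CC} controls. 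I expect this to be the one genuinely non-obvious point: one must resist splitting off the factor $e^{4\pi c_n^2}$ and then spending a Hölder exponent on the $\psi_n$--term, since that would create a factor $e^{4\pi c_n^2\,p\tau_n}$ with $p>1$ which combines badly with $e^{4\pi c_n^2}$ when $\tau_n S_n$ is large; keeping the linear coefficient exact avoids this entirely.

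Next I would estimate $\int_0^\infty e^{8\pi c_n\psi_n(t)-2(1-\tau_n)t}\,dt$. After the substitution $t=s/(1-\tau_n)$ this equals $\frac1{1-\tau_n}\int_0^\infty e^{8\pi c_n\tilde\psi_n(s)-2s}\,ds$ where $\tilde\psi_n(s):=\psi_n\!\big(s/(1-\tau_n)\big)$; the profile $\tilde\psi_n$ belongs to a radial function $\tilde v_n\in H^1_0(D_{\delta_n})$ with $\int_{D_{\delta_n}}|\nabla\tilde v_n|^2=\tau_n/(1-\tau_n)\le 1$ for $n$ large. Applying Corollary \ref{cor CC} to $\tilde v_n$ with $c=8\pi c_n$ and translating back to the $s$--variable gives $\int_0^\infty e^{8\pi c_n\tilde\psi_n-2s}\,ds\le\tfrac12 e^{1+4\pi c_n^2\tau_n/(1-\tau_n)}$. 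Combining this with the bound of the previous step and the identity $1+\tau_n/(1-\tau_n)=(1-\tau_n)^{-1}$ yields
\[
\int_{D_{\delta_n}}e^{4\pi u_n^2}\,dx\le\frac{\pi\,\delta_n^2}{1-\tau_n}\,e^{\,1+\frac{4\pi c_n^2}{1-\tau_n}} .
\]

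Finally I would close the loop with Lemma \ref{decay}: evaluated at $|x|=\delta_n$ it gives $c_n^2\le\frac{1-\tau_n}{2\pi}\log(1/\delta_n)=\frac{(1-\tau_n)S_n}{2\pi}$, so $\frac{4\pi c_n^2}{1-\tau_n}\le 2S_n$ and $e^{1+4\pi c_n^2/(1-\tau_n)}\le e\,\delta_n^{-2}$. Substituting, the powers of $\delta_n$ cancel and we obtain $\int_{D_{\delta_n}}e^{4\pi u_n^2}\,dx\le\frac{\pi e}{1-\tau_n}$; letting $n\to\infty$ and using $\tau_n\to 0$ gives the claim. Beyond the point highlighted in the second paragraph, the remaining work is routine: the Jacobian bookkeeping for the two integral identities, checking that $\tilde v_n$ is an admissible competitor for Corollary \ref{cor CC} (radial, vanishing on $\partial D_{\delta_n}$, with the stated Dirichlet energy), and noting that everything is needed only for $n$ large, where $\tau_n<1$.
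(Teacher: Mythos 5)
Your proof is correct, and it takes a genuinely different route through the quadratic term than the paper does. Both arguments share the same skeleton: decompose $u_n$ on $D_{\delta_n}$ into the boundary value $c_n=u_n(\delta_n)$ plus the oscillation, apply the scaled Onofri inequality (Corollary~\ref{cor CC}) to the \emph{linear} term with coefficient $8\pi c_n$, and close with Lemma~\ref{decay}. The difference is entirely in how the \emph{quadratic} oscillation $v_n^2$ (your $\psi_n^2$) is disposed of. The paper applies H\"older's inequality with exponents $\tau_n$ and $1-\tau_n$ to the product $e^{4\pi v_n^2}\,e^{8\pi c_n v_n}$, and controls the factor $\int e^{4\pi v_n^2/\tau_n}$ by the global Moser--Trudinger bound $\sup_H E_0<\infty$ via Remark~\ref{scala}. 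You instead pass to logarithmic radial coordinates and absorb the quadratic term pointwise, using the Cauchy--Schwarz estimate $4\pi\psi_n(t)^2\le 2\tau_n t$ (which is essentially Lemma~\ref{decay} applied to the restriction to $D_{\delta_n}$) so that it is swallowed by the area factor $e^{-2t}$, at the cost of rescaling $t\mapsto t/(1-\tau_n)$. The identity $1+\tau_n/(1-\tau_n)=(1-\tau_n)^{-1}$ then makes the exponents recombine exactly as in the paper's final line, and both proofs end with $\le (1-\tau_n)^{-1}$ or $(\sup_H E_0)^{\tau_n}$ correction factors that tend to $1$. What your route buys is a small but real economy: you never invoke the a priori bound $\sup_H E_0<\infty$, so the argument rests only on Corollary~\ref{cor CC}, Lemma~\ref{decay}, and Cauchy--Schwarz; this is slightly cleaner conceptually and emphasizes that Onofri alone (in its scaled form) suffices. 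One minor note: your parenthetical worry about "spending a H\"older exponent on the $\psi_n$--term" describes a hypothetical misstep, not what the paper actually does --- the paper's H\"older split is perfectly sound --- but this does not affect the validity of your argument.
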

\begin{proof}
Take $v_n:= u_n - u_n(\delta_n) \in H^1_0(D_{\delta_n})$ and set  $\tau_n:=\int_{D_{\delta_n}}|\nabla u_n|^2 dx$. 
If $\tau_n=0$, then $u_n\equiv u_n(\delta_n)$ in $D_{\delta_n}$ and, using Lemma \ref{decay}, we find
$$
\int_{D_{\delta_n}} e^{4\pi u_n^2 } dx =\pi  \delta_n^{2}  e^{4\pi u_n(\delta_n)^2} \le \pi < \pi e.
$$
Thus, w.l.o.g. we can assume $\tau_n >0$ for every $n \in \N$.  By Holder's inequality  and Remark \ref{scala} we have
\begin{equation}\label{bfds}
\begin{split}
&\int_{D_{\delta_n}} e^{4\pi u_n^2 } dx = e^{4\pi u_n(\delta_n)^2} \int_{D_{\delta_n}} e^{4\pi v_n^2 + 8\pi  u_n(\delta_n) v_n}  dx  \\&  
\le e^{4\pi u_n(\delta_n)^2 } \left( \int_{D_{\delta_n}}e^{4\pi\frac{ v_n^2}{\tau_n}} dx \right)^{\tau_n} \left(\int_{D_{\delta_n}}  e^{\frac{8\pi u_n(\delta_n) v_n}{1-\tau_n}} dx \right)^{1-\tau_n} \\&
\le e^{4\pi u_n(\delta_n)^2}\left( \delta_n^{2} \sup_H E_0 \right)^{\tau_n} \left(\int_{D_{\delta_n}}  e^{\frac{8\pi u_n(\delta_n) v_n}{1-\tau_n}} dx \right)^{1-\tau_n}.
\end{split}
\end{equation}
Applying Corollary \ref{cor CC} with $\tau=\tau_n, \delta=\delta_n$ and $c=\frac{8\pi u_n(\delta_n)}{1-\tau_n}$ we find 
$$
\int_{D_{\delta_n}}  e^{\frac{8\pi u_n(\delta_n) v_n}{1-\tau_n}} dx \le \delta_n^{2} {\pi e^{1+\frac{4\pi u_n(\delta_n)^2 }{(1-\tau_n)^2}\tau_n}}
$$
thus from \eqref{bfds} it follows
\[
\begin{split}
\int_{D_{\delta_n}} e^{4\pi  u_n^2} dx&\le  \delta_n^2 \left( \sup_H E_0 \right)^{\tau_n} \left(\pi e \right)^{1-\tau_n}e^{4\pi u_n^2(\delta_n) + \frac{4\pi u_n(\delta_n)^2\tau_n}{(1-\tau_n)}}\\&
=\delta_n^{2} \left( \sup_H E_0 \right)^{\tau_n} \left(\pi e \right)^{1-\tau_n}e^{\frac{4\pi u_n(\delta)^2}{1-\tau_n} }.
\end{split}
\]
Lemma \ref{decay} yields 
$$
\delta_n^{2} e^{4\pi \frac{u_n(\delta_n)^2}{1-\tau_n}} \le 1,
$$
therefore
$$
\int_{D_{\delta_n}} e^{4\pi  u_n^2} dx \le   \left( \sup_H E_0 \right)^{\tau_n} \left(\pi e\right)^{1-\tau_n}.
$$
Since $\tau_n\ra 0$, we obtain the conclusion by taking the $\limsup$ as $n\to \infty$ on both sides. 
\end{proof}

In order to prove Theorem \ref{teo 1} on $H_{rad}$ for $\alpha=0$, it is sufficient to show that, if $u_n\rw 0$, there exists a sequence $\delta_n$ satisfying the hypotheses of Lemma  \ref{disketto} and such that
\begin{equation}\label{con0}
\int_{D_{\delta_n}}  \left( e^{4\pi u_n^2} -1\right) dx \ra 0.
\end{equation}
Note that, by dominated convergence theorem, \eqref{con0} holds  if there exists $f\in  L^1(D)$ such that  
\begin{equation}\label{giust}
 e^{4\pi u_n^2} \le f
\end{equation}  in $D\bs D_{\delta_n}$. In the next lemma we  will chose a function  $f\in L^1(D)$ with critical growth near $0$ (i.e. $f(x)\approx \frac{1}{|x|^2\log^2|x|}$) and define $\delta_n$ so that \eqref{giust} is satisfied.

\begin{lemma}\label{raggio CC} 
Take $u_n\in H_{rad}$ such that  
\begin{equation}\label{conv unif}
\sup_{D\bs D_r} u_n \ra 0 \qquad \forall\; r\in (0,1).
\end{equation}
Then there exists a sequence $\delta_n \in (0,1)$ such that 
\begin{enumerate}
\item \label{pr1} $\delta_n\ra 0$.
\item \label{pr2} $\tau_n:=\int_{D_{\delta_n}}|\nabla u_n|^2 dx \ra 0$.
\item \label{pr3} $\int_{D\bs D_{\delta_n}} e^{4\pi u_n^2} dx \ra \pi  $.
\end{enumerate}
\end{lemma}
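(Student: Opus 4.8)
The plan is to produce $\delta_n$ by comparing $e^{4\pi u_n^2}$ with a fixed barrier in $L^1(D)$ having the critical singular growth $|x|^{-2}(\log|x|)^{-2}$ at the origin, exactly as anticipated in the discussion before the statement. Fix once and for all a positive $f\in L^1(D)$ with $f\geq 2$ on $D$, with $f(x)\sim |x|^{-2}(\log|x|)^{-2}$ as $|x|\to 0$, and with $f(x)\geq |x|^{-2}$ whenever $|x|$ is close to $1$; for instance one may take $f(x)=2+|x|^{-2}(1+\log^2|x|)^{-1}$, which lies in $L^1(D)$ since $\int_D |x|^{-2}(1+\log^2|x|)^{-1}\,dx=2\pi\int_{-\infty}^{0}(1+s^2)^{-1}\,ds=\pi^2$. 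Then for each $n$ I would set
\[
\delta_n:=\inf\Bigl\{r\in(0,1)\ :\ e^{4\pi u_n(x)^2}\leq f(x)\ \text{ for all }x\text{ with }r\leq|x|<1\Bigr\}.
\]
Since Lemma \ref{decay} gives $e^{4\pi u_n(x)^2}\leq |x|^{-2}$ on $D\setminus\{0\}$, and $|x|^{-2}\leq f(x)$ for $|x|$ near $1$ uniformly in $n$, this set is non-empty and $\delta_n\in[0,1)$ is well defined. Minimality of $\delta_n$ together with continuity of $t\mapsto u_n(t)$ then yield both the domination $e^{4\pi u_n^2}\leq f$ on $D\setminus D_{\delta_n}$ and the threshold identity $e^{4\pi u_n(\delta_n)^2}=f(\delta_n)$. (If $\delta_n=0$ occurs for some $n$, I would replace $\delta_n$ by any radius in $(0,1/n)$ at which $\int_{D_{\delta_n}}|\nabla u_n|^2\,dx<1/n$, using absolute continuity of $r\mapsto\int_{D_r}|\nabla u_n|^2\,dx$ at $0$; the domination by $f$ still holds and nothing below changes.)

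\medskip

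The two soft properties come next. For \ref{pr1}: fix $r\in(0,1)$; by \eqref{conv unif} one has $\sup_{D\setminus D_r}u_n=u_n(r)\ra 0$, hence $\sup_{D\setminus D_r}e^{4\pi u_n^2}\ra 1<2\leq\inf_D f$, so $e^{4\pi u_n^2}\leq f$ on $D\setminus D_r$ for $n$ large, i.e. $\delta_n\leq r$ eventually; letting $r\to 0$ gives $\delta_n\ra 0$. For \ref{pr3}: fix $r\in(0,1)$ and, for $n$ so large that $\delta_n<r$, split
\[
\int_{D\setminus D_{\delta_n}}\bigl(e^{4\pi u_n^2}-1\bigr)\,dx=\int_{D\setminus D_r}\bigl(e^{4\pi u_n^2}-1\bigr)\,dx+\int_{D_r\setminus D_{\delta_n}}\bigl(e^{4\pi u_n^2}-1\bigr)\,dx.
\]
The first term is non-negative and bounded by $\pi\bigl(e^{4\pi u_n(r)^2}-1\bigr)\ra 0$ by \eqref{conv unif}; the second is non-negative and, by the domination from the first step, at most $\int_{D_r}f\,dx$. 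Hence $\limsup_n\int_{D\setminus D_{\delta_n}}(e^{4\pi u_n^2}-1)\,dx\leq\int_{D_r}f\,dx$ for every $r$, and since $f\in L^1(D)$ this $\limsup$ is $0$ on letting $r\to 0$; combined with $|D\setminus D_{\delta_n}|=\pi(1-\delta_n^2)\ra\pi$ (from \ref{pr1}) this gives \ref{pr3}.

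\medskip

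The heart of the matter is \ref{pr2}. Applying Lemma \ref{decay} at a point of $\partial D_{\delta_n}$ and inserting the threshold identity,
\[
\log f(\delta_n)=4\pi u_n(\delta_n)^2\leq 2\,(1-\tau_n)\,(-\log\delta_n),\qquad\text{so}\qquad\tau_n\leq 1-\frac{\log f(\delta_n)}{-2\log\delta_n}.
\]
Because $\delta_n\ra 0$ and $f(\delta_n)$ grows like $\delta_n^{-2}(\log\delta_n)^{-2}$, we have $\log f(\delta_n)=-2\log\delta_n-2\log(-\log\delta_n)+o(1)=(1+o(1))(-2\log\delta_n)$, hence $\frac{\log f(\delta_n)}{-2\log\delta_n}\ra 1$ and therefore $\limsup_n\tau_n\leq 0$, which, together with $\tau_n\geq 0$, is exactly \ref{pr2}.

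\medskip

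The only genuinely delicate point is the construction of the barrier $f$ in the first step: it has to be integrable near the origin — so a plain $|x|^{-2}$ is not admissible and the logarithmic correction is indispensable — while remaining singular enough that $\log f(\delta_n)\sim-2\log\delta_n$. It is precisely this balance that turns the threshold identity $4\pi u_n(\delta_n)^2=\log f(\delta_n)$, once fed into the sharp decay estimate of Lemma \ref{decay}, into the vanishing of the energy $\tau_n$ concentrated inside $D_{\delta_n}$; everything else is routine bookkeeping.
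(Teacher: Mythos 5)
Your proof is correct and follows essentially the same route as the paper: you construct the same critical barrier $f\in L^1(D)$ with the $|x|^{-2}\log^{-2}|x|$ singularity (the paper's $f$ is piecewise, yours is a smooth global formula, but this is immaterial), define $\delta_n$ as the infimum radius beyond which $e^{4\pi u_n^2}\leq f$, handle $\delta_n=0$ with the same auxiliary small-radius trick, and then deduce $\delta_n\to 0$ and $\tau_n\to 0$ from the threshold identity $e^{4\pi u_n(\delta_n)^2}=f(\delta_n)$ combined with Lemma \ref{decay}. Your algebra for \ref{pr2} solves directly for $\tau_n$, whereas the paper bounds $\delta_{n_k}^{-2\tau_{n_k}}/\log^2\delta_{n_k}\leq 1$ and argues by contradiction, but these are the same computation in different dress.
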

\begin{proof}
We consider the function
\begin{equation}\label{fa}
f(x):=\begin{Si}{cl}
\frac{1}{|x|^2 \log^2|x|}  & \; |x|\le e^{-1}
\vspace{0.2cm}\\
 e^2 & \; |x| \in (e^{-1},1].
\end{Si}
\end{equation} 
Note that $f\in L^1(D)$ and 
\begin{equation}\label{inf f}
\inf_{ (0,1)} f = e^2.
\end{equation}
Let us fix $\gamma_n\in (0,\frac{1}{n})$ such that $\int_{D_{\gamma_n}}|\nabla u_n|^2 dx \le \frac{1}{n}.$
We define 
$$
\ti{\delta}_n := \inf\left\{r \in (0,1)\;:\; e^{4\pi u_n^2(x)} \le f(x) \;  \mbox{ for } r \le |x| \le 1  \right\}\in [0,1).
$$
and
$$
\delta_n := \begin{Si}{cc}
\ti{\delta}_n  & \mbox{ if }\ti{\delta}_n >0 \\
\gamma_n  & \mbox{ if }\ti{\delta}_n =0.
\end{Si}
$$
By definition we have 
$$
e^{4\pi  u_n^2 } \le f \qquad \mbox{ in }  \quad D\bs D_{\delta_n},
$$
thus \emph{\ref{pr3}} follows by dominated convergence Theorem.  To conclude the proof it suffices to prove that if $n_k\nearrow +\infty$ is chosen so that  $\delta_{n_k}= \ti{\delta}_{n_k}$ $\forall\; k$, then
\begin{equation}\label{fine}
\lim_{k\to \infty}\delta_{n_k}= \lim_{k\to \infty} \tau_{n_k}=0.
\end{equation} 
For such $n_k$ one has
\begin{equation}\label{bosad}
e^{4\pi u_{n_k}(\delta_{n_k})^2}=f(\delta_{n_k}).
\end{equation}
In particular using \eqref{inf f} we obtain
$$
e^{4\pi u_{n_k}(\delta_{n_k})^2} =f(\delta_{n_k}) \ge e^2>1
$$
which, by \eqref{conv unif}, yields $\delta_{n_k}\stackrel{k\to \infty}{\ra}0$.
Finally, Lemma \ref{decay} and \eqref{bosad} imply
$$
1\ge \delta_{n_k}^{2(1-\tau_{n_k})} e^{4\pi u_{n_k}(\delta_{n_k})^2}= \frac{\delta_{n_k}^{-2\tau_{n_k}}}{ \log^2\delta_{n_k}}
$$
so that $\tau_{n_k}\stackrel{k\to \infty}{\ra}0$ (otherwise the limit of the RHS would be $+\infty$). 
\end{proof}

Combining Lemma \ref{disketto} with Lemma \ref{raggio CC} we immediately get \eqref{CC} for radially symmetric functions: 

\begin{prop}\label{CC radial}
Let $u_n\in H_{rad}$ and $\alpha\in (-1,+\infty]$. If for any $r\in (0,1)$
\[
\sup_{D\bs D_r} u_n \ra 0,
\]
then 
\[
\limsup_{n\to \infty}E_\alpha (u_n) \le \frac{\pi (1+e)}{(1+\alpha)}.
\]
\end{prop}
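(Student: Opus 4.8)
The plan is to prove the statement first in the case $\alpha=0$ on $H_{rad}$, directly from Lemmas \ref{disketto} and \ref{raggio CC}, and then to deduce the general case by an explicit radial change of variables which turns $E_\alpha$ into $\frac{1}{1+\alpha}E_0$. For $\alpha=0$: given $u_n\in H_{rad}$ with $\sup_{D\bs D_r}u_n\ra 0$ for every $r\in(0,1)$, I would invoke Lemma \ref{raggio CC} to produce a radius $\delta_n\in(0,1)$ with $\delta_n\ra 0$, $\tau_n:=\int_{D_{\delta_n}}|\nabla u_n|^2\,dx\ra 0$, and $\int_{D\bs D_{\delta_n}}e^{4\pi u_n^2}\,dx\ra \pi$. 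I would then split
\[
E_0(u_n)=\int_{D_{\delta_n}}e^{4\pi u_n^2}\,dx+\int_{D\bs D_{\delta_n}}e^{4\pi u_n^2}\,dx ,
\]
bound the $\limsup$ of the first term by $\pi e$ via Lemma \ref{disketto} (whose hypotheses $\delta_n\ra0$ and $\tau_n\ra0$ are precisely conclusions \ref{pr1}--\ref{pr2} of Lemma \ref{raggio CC}), and use conclusion \ref{pr3} to send the second term to $\pi$. This gives $\limsup_n E_0(u_n)\le\pi(1+e)=\frac{\pi(1+e)}{1+0}$.

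For the general case, fix $\alpha$ as in the statement and, for a radial $u\in H_{rad}$, write $u=u(r)$ and set $w(s):=\sqrt{1+\alpha}\,u(s^{1/(1+\alpha)})$ for $s\in(0,1]$, extended radially to $D$. Since $r\mapsto r^{1+\alpha}$ is an increasing bijection of $(0,1]$ onto itself, $w$ is radial and decreasing, vanishes on $\partial D$, and lies in $H^1_0(D)$; a routine computation in polar coordinates (the substitution $s=r^{1+\alpha}$) shows
\[
\int_D|\nabla w|^2\,dy=\int_D|\nabla u|^2\,dx\le 1 ,\qquad
E_\alpha(u)=\frac{1}{1+\alpha}\int_D e^{4\pi w^2}\,dy=\frac{1}{1+\alpha}E_0(w) .
\]
In particular $w\in H_{rad}$. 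I would then check that the decay hypothesis transfers: if $\sup_{D\bs D_r}u_n\ra0$ for all $r$, then $\sup_{D\bs D_\rho}w_n=\sqrt{1+\alpha}\,\sup_{D\bs D_{\rho^{1/(1+\alpha)}}}u_n\ra0$ for all $\rho\in(0,1)$, i.e. \eqref{conv unif} holds for the sequence $w_n$. Applying the already established case $\alpha=0$ to $w_n$ then yields
\[
\limsup_{n\to\infty}E_\alpha(u_n)=\frac{1}{1+\alpha}\limsup_{n\to\infty}E_0(w_n)\le\frac{\pi(1+e)}{1+\alpha} .
\]

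Since Lemmas \ref{disketto} and \ref{raggio CC} already carry the analytic weight, I do not expect a serious obstacle. The only delicate points are bookkeeping: verifying that the change of variables $u\mapsto w$ genuinely preserves radial monotonicity, the vanishing boundary datum, and $H^1$ membership, and the elementary check that \eqref{conv unif} passes to $w_n$. The one computation that must be done exactly rather than up to constants is the invariance of the Dirichlet energy under $u\mapsto w$ — it is this exact invariance (rather than a mere comparability) that makes the constant come out as $\frac{\pi(1+e)}{1+\alpha}$.
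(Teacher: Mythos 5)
Your proposal is correct and matches the paper's own proof in essentially all respects: the case $\alpha=0$ is handled by combining Lemma~\ref{raggio CC} with Lemma~\ref{disketto} on the splitting $D = D_{\delta_n} \cup (D \bs D_{\delta_n})$, and the general case is reduced to $\alpha=0$ via the same radial substitution $w_n(x) = \sqrt{1+\alpha}\,u_n(|x|^{1/(1+\alpha)})$, which preserves the Dirichlet energy exactly and rescales $E_\alpha$ into $\tfrac{1}{1+\alpha}E_0$. You also make explicit the (easy but worth noting) verification that the decay hypothesis \eqref{conv unif} is inherited by the transformed sequence, a step the paper leaves implicit.
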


\begin{proof}




If $\alpha=0$ the proof follows directly applying Lemma \ref{raggio CC} and Lemma \ref{disketto}. 
If $\alpha\neq 0$ consider
$$v_n(x)=(1+\alpha)^{\frac 12}u_n(|x|^{\frac{1}{1+\alpha}}).$$ 

We have
$$
\int_{D}|\nabla v_n|^2\,  dx = \int_{D}|\nabla u_n|^2 \, dx
$$
and hence $v_n\in H_{rad}$. Moreover we compute
$$
\int_{D} |x|^{2\alpha} e^{(1+\alpha)u_n^2} \,dx=\frac{1}{1+\alpha}\int_{D} e^{4\pi v_n^2} \, dx
$$

and the claim follows at once from the case $\alpha=0$.

\end{proof}

To pass from Proposition \ref{CC radial} to Theorem \ref{teo 1} we will use symmetric rearrangements.  
We recall that given a measurable function $u: \R^2 \ra [0,+\infty)$, the symmetric decreasing rearrangement of $u$ is the unique right-continuous radially symmetric and decreasing  function $u^*:\R^2 \ra [0,+\infty) $ such that 
$$
|\{u>t\}| = |\{u^*>t\}| \qquad \forall\; t>0.
$$
Among the properties of $u^*$ we recall that
\begin{enumerate}
\item If $u \in L^p(\R^2)$, then $u^*\in L^p(\R^2)$ and $\|u_*\|_p=\|u\|_p$.
\item If $u\in H^1_0(D)$, then $u^*\in H^1_{0,rad}(D)$ and 
\begin{equation}\label{poly}
\int_{D}|\nabla u^*|^2 dx \le \int_{D}|\nabla u|^2dx.
\end{equation}
\item If $u,v:\R^2\ra [0,+\infty)$, then  
\begin{equation}\label{dec2}
\int_{\R^2} u^*(x) v^*(x) dx \ge \int_{\R^2} u(x) v(x) dx.
\end{equation}
In particular if $u\in H^1_0(D)$ and $\alpha \le 0$, 
\begin{equation}\label{dec}
\int_{D} |x|^{2\alpha} e^{u^*} dx \ge \int_{D} |x|^{2\alpha} e^{u} dx. 
\end{equation}
\end{enumerate}
Note that \eqref{dec} does not hold if $\alpha>0$. We refer the reader to \cite{kesa} for a more detailed introduction to symmetric rearrangements.

\begin{proof}[Proof of Theorem \ref{teo 1}]
Take $u_n\in H$ such that $u_n\rw 0$ and let $u_n^*$ be the symmetric decreasing rearrangement of $u_n$.  Then $u_n^*\in H_{rad}$ and since $\|u_n^*\|_2= \|u_n\|_2\ra 0$ we have $\sup_{D\bs D_r} u_n^*\ra 0$ $\forall\; r>0$.  Thus from \eqref{dec} and  Proposition  \ref{CC radial} we get 
$$
\limsup_{n\to \infty} E_\alpha(u_n) \le \limsup_{n\to \infty}E_\alpha(u_n^*) \le \frac{\pi (1+e)}{1+\alpha}.
$$
\end{proof}

In the next section we will need the following local version of Theorem \ref{teo 1}.  

\begin{cor}\label{mi serve}
Fix  $\delta>0$, $\alpha\in(-1, 0]$ and take $u_n\in H^1_0(D_\delta)$ such that $\int_{D_\delta} |\nabla u_n|^2 dx \le 1$ and  $u_n\rw 0$ in $H^1_0(D_\delta)$. 
For any choice of sequences $\delta_n\to 0$, $x_n\in \Omega$ such that $D_{\delta_n}(x_n) \subset D_\delta$ we have 
$$
\limsup_{n\to \infty} \int_{D_{\delta_n}(x_n)} |x|^{2\alpha}e^{4\pi (1+\alpha)u_n^2} dx \le \frac{\pi e}{1+\alpha}\delta^{2(1+\alpha)}.
$$
\end{cor}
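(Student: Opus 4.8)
The plan is to deduce the estimate from the sharp Carleson--Chang bound of Theorem~\ref{teo 1} on the whole disk, recovering the improvement from $1+e$ to $e$ by subtracting the contribution of the constant function $1$, whose weighted mass $\int_{D_\delta}|x|^{2\alpha}\,dx=\tfrac{\pi}{1+\alpha}\delta^{2(1+\alpha)}$ is exactly the gap. First I would normalise: the change of variables $x\mapsto\delta x$ preserves Dirichlet energies, sends $D_{\delta_n}(x_n)\subset D_\delta$ to a ball of radius $\delta_n/\delta\to0$ contained in $D$, and multiplies the integral under study by $\delta^{2(1+\alpha)}$, so one reduces to the case $\delta=1$. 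Here one should note that $u_n\rw0$ in $H^1_0(D_\delta)$ gives, via Rellich, $u_n\to0$ in $L^2(D_\delta)$, hence the rescaled functions still lie in $H$ and converge weakly to $0$ in $H^1_0(D)$.

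With $\delta=1$ (and keeping the notation $u_n$ for the rescaled functions), the core of the argument is the splitting $e^{4\pi(1+\alpha)u_n^2}=\big(e^{4\pi(1+\alpha)u_n^2}-1\big)+1$. Since $u_n^2\ge0$, the function $|x|^{2\alpha}\big(e^{4\pi(1+\alpha)u_n^2}-1\big)$ is nonnegative on $D$, so the inclusion $D_{\delta_n}(x_n)\subseteq D$ and $\int_D|x|^{2\alpha}\,dx=\tfrac{\pi}{1+\alpha}$ yield
$$
\int_{D_{\delta_n}(x_n)}|x|^{2\alpha}e^{4\pi(1+\alpha)u_n^2}\,dx\le \int_{D}|x|^{2\alpha}e^{4\pi(1+\alpha)u_n^2}\,dx-\frac{\pi}{1+\alpha}+\int_{D_{\delta_n}(x_n)}|x|^{2\alpha}\,dx .
$$
Theorem~\ref{teo 1} bounds $\limsup_n$ of the first term on the right by $\tfrac{\pi(1+e)}{1+\alpha}$, so it only remains to show $\int_{D_{\delta_n}(x_n)}|x|^{2\alpha}\,dx\to0$. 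For this I would split into two cases: if $|x_n|\le2\delta_n$ then $D_{\delta_n}(x_n)\subseteq D_{3\delta_n}$ and the integral is at most $\tfrac{\pi(3\delta_n)^{2(1+\alpha)}}{1+\alpha}$; if $|x_n|>2\delta_n$ then $|x|\ge\delta_n$ on $D_{\delta_n}(x_n)$, so $|x|^{2\alpha}\le\delta_n^{2\alpha}$ (here $\alpha\le0$) and the integral is at most $\pi\delta_n^{2(1+\alpha)}$; either way it tends to $0$ because $1+\alpha>0$ and $\delta_n\to0$. Taking $\limsup$ in the displayed inequality then gives $\tfrac{\pi(1+e)}{1+\alpha}-\tfrac{\pi}{1+\alpha}=\tfrac{\pi e}{1+\alpha}$, and undoing the rescaling reinstates the factor $\delta^{2(1+\alpha)}$.

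I do not expect any genuine obstacle: the argument is essentially a monotonicity-plus-subtraction trick sitting on top of Theorem~\ref{teo 1}. The only delicate point is that the singular weight $|x|^{2\alpha}$ can blow up if the balls $D_{\delta_n}(x_n)$ slide towards the origin, which is precisely why the vanishing of $\int_{D_{\delta_n}(x_n)}|x|^{2\alpha}\,dx$ has to be checked through the dichotomy on $|x_n|$ versus $\delta_n$ rather than by a uniform bound on the weight.
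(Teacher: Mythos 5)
Your argument is correct and follows essentially the same route as the paper: rescale to the unit disk, subtract the constant $1$ so that Theorem~\ref{teo 1}'s bound $\tfrac{\pi(1+e)}{1+\alpha}$ loses the weighted mass $\int_D|x|^{2\alpha}\,dx=\tfrac{\pi}{1+\alpha}$ of the constant, extend the integral from $D_{\delta_n}(x_n)$ to the full disk by nonnegativity, and conclude after noting that $\int_{D_{\delta_n}(x_n)}|x|^{2\alpha}\,dx\to0$. The one place you go beyond the paper's write-up is that you actually verify this last vanishing via the $|x_n|\lessgtr2\delta_n$ dichotomy, whereas the paper silently absorbs it into an equality of $\limsup$'s.
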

\begin{proof}
Let us define $\ti{u}_n(x):= u_n({\delta}x).$ Note that $\ti{u}_n \in H$ and satisfies the hypotheses  of Theorem \ref{teo 1}, thus 
\[
\begin{split}
\limsup_{n\to \infty} &\int_{D_\delta} |x|^{2\alpha} (e^{4\pi u_n^2} -1)\, dx \\&=  \delta^{2(1+\alpha)} \limsup_{n\to \infty} \int_{D} |x|^{2\alpha} (e^{4\pi \ti{u}_n^2} -1)\, dx\\&
\le \delta^{2(1+\alpha)} \frac{\pi e}{1+\alpha}.
\end{split}
\]
Thus we get
\[
\begin{split}
\limsup_{n\to \infty} &\int_{D_{\delta_n}(x_n)} |x|^{2\alpha} e^{4\pi (1+\alpha) u_n^2} dx\\&
= \limsup_{n\to \infty} \int_{D_{\delta_n}(x_n)} |x|^{2\alpha} \left( e^{4\pi (1+\alpha) u_n^2} -1\right)dx\\&
\le \int_{D_\delta} |x|^{2\alpha} (e^{4\pi u_n^2} -1)dx \\&
\le \delta^{2(1+\alpha)} \frac{\pi e}{1+\alpha}.
\end{split}
\]
\end{proof}

\begin{oss}
We remark that for $\alpha\in(-1,0]$ by Theorem \ref{teo 1} it is enough to show that 

$$\sup_{H} E_\alpha >\frac{\pi (1+e)}{1+\alpha}$$
 in order to prove existence of extremal functions for $E_\alpha$ (see \cite{CC}, \cite{Csa2}).
 \end{oss}

We conclude this section pointing out that as we just did for the Carleson-Chang type estimates, one can have a singular version of the Onofri inequality \eqref{Onofri Disco} (see Proposition \ref{prop2prop3} in the Appendix). In particular one can deduce the following generalized version of Corollary \ref{cor CC}. 

\begin{cor}\label{cor CC sing}
$\forall\; $ $\delta,\tau >0$ , $c\in \R$ and $\alpha\in (-1,0]$ we have 
$$
 \int_{D_\delta}  |x|^{2\alpha}e^{c u} \, dx  \le \frac{ \pi e^{1+\frac{c^2 \tau}{16\pi(1+\alpha)}} \delta^{2(1+\alpha)}}{1+\alpha}
$$
$\forall\; u\in H^1_0(D_\delta)$ such that $\int_{D_\delta} |\nabla u|^2 \, dx\le \tau$.
\end{cor}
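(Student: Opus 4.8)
The plan is to imitate the (scaling) argument that derives Corollary~\ref{cor CC} from Onofri's inequality \eqref{Onofri Disco}, replacing the latter by its singular analogue. The input is the singular Onofri inequality on the unit disk (Proposition~\ref{prop2prop3} in the Appendix): for every $\alpha\in(-1,0]$ and every $w\in H^1_0(D)$,
\[
\log\!\left(\frac{1+\alpha}{\pi}\int_{D}|y|^{2\alpha}e^{w}\,dy\right)\le \frac{1}{16\pi(1+\alpha)}\int_{D}|\nabla w|^2\,dy+1.
\]
This is deduced from \eqref{Onofri Disco} exactly as in the proof of Proposition~\ref{CC radial}: one first reduces to radially symmetric competitors by symmetric rearrangement (using $e^{w}\le e^{|w|}$, together with \eqref{dec}, which holds because $\alpha\le0$, and \eqref{poly} applied to $|w|$), and then the change of variable $y\mapsto|y|^{1/(1+\alpha)}$ — which maps $H^1_0(D)$ into itself since $\tfrac{1}{1+\alpha}\ge1$ — gives $\int_D|y|^{2\alpha}e^{w}\,dy=\tfrac{1}{1+\alpha}\int_D e^{\widetilde w}\,dy$ with $\widetilde w(y):=w(|y|^{1/(1+\alpha)})$ and $\int_D|\nabla\widetilde w|^2\,dy=\tfrac{1}{1+\alpha}\int_D|\nabla w|^2\,dy$, so the claim follows by applying \eqref{Onofri Disco} to $\widetilde w$.

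Now fix $\delta,\tau>0$, $c\in\R$, $\alpha\in(-1,0]$ and $u\in H^1_0(D_\delta)$ with $\int_{D_\delta}|\nabla u|^2\,dx\le\tau$. Set $w(y):=c\,u(\delta y)$, so that $w\in H^1_0(D)$ and, by the change of variables $x=\delta y$,
\[
\int_D|\nabla w|^2\,dy=c^2\int_{D_\delta}|\nabla u|^2\,dx\le c^2\tau,\qquad \int_{D_\delta}|x|^{2\alpha}e^{cu}\,dx=\delta^{2(1+\alpha)}\int_D|y|^{2\alpha}e^{w}\,dy.
\]
Applying the singular Onofri inequality to $w$ we get
\[
\int_D|y|^{2\alpha}e^{w}\,dy\le\frac{\pi}{1+\alpha}\,\exp\!\left(1+\frac{1}{16\pi(1+\alpha)}\int_D|\nabla w|^2\,dy\right)\le\frac{\pi}{1+\alpha}\,e^{1+\frac{c^2\tau}{16\pi(1+\alpha)}},
\]
and multiplying by $\delta^{2(1+\alpha)}$ yields exactly the asserted bound
\[
\int_{D_\delta}|x|^{2\alpha}e^{cu}\,dx\le\frac{\pi\,e^{1+\frac{c^2\tau}{16\pi(1+\alpha)}}}{1+\alpha}\,\delta^{2(1+\alpha)}.
\]

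I expect no real obstacle here: once the singular Onofri inequality is available, the statement follows from the single rescaling $x=\delta y$, just as in the unsingular Corollary~\ref{cor CC}, and the sign of $c$ enters only through $c^2$. The only genuine work is the proof of the singular Onofri inequality itself, and there the only point to watch is that the competitor need not be radial or nonnegative; this is precisely where the hypothesis $\alpha\le0$ is used, through the rearrangement inequality \eqref{dec} (which fails for $\alpha>0$), to reduce to the radial case before performing the change of variable $y\mapsto|y|^{1/(1+\alpha)}$.
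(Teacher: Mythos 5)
Your proposal is correct and follows the paper's (implicit) route: the paper does not spell out a proof of Corollary~\ref{cor CC sing} but presents it as an immediate consequence of the singular Onofri inequality of Proposition~\ref{prop2prop3}, and the scaling $w(y):=c\,u(\delta y)$ you use is exactly the scaling that produces the nonsingular Corollary~\ref{cor CC} from \eqref{Onofri Disco}. All constants check out: $\int_D|\nabla w|^2\,dy=c^2\int_{D_\delta}|\nabla u|^2\,dx\le c^2\tau$ and $\int_{D_\delta}|x|^{2\alpha}e^{cu}\,dx=\delta^{2(1+\alpha)}\int_D|y|^{2\alpha}e^{w}\,dy$, after which Proposition~\ref{prop2prop3} gives the stated bound.

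One small remark: your parenthetical about passing through $e^{w}\le e^{|w|}$ before rearranging is a useful clarification. The paper's proof of Proposition~\ref{prop2prop3} just says ``use again symmetric rearrangements as we did in the proof of Theorem~\ref{teo 1},'' but in Theorem~\ref{teo 1} the exponent is $u^2$, so the sign of $u$ is irrelevant and the rearrangement is applied to the nonnegative function directly. Here the exponent is linear, so one does need the extra step you describe: replace $w$ by $|w|$ (which does not decrease the left side and preserves the Dirichlet energy), then rearrange $|w|$ and invoke \eqref{dec}, which requires $\alpha\le0$ so that $|x|^{2\alpha}$ is already symmetric decreasing. This is exactly where the restriction $\alpha\in(-1,0]$ enters, as you correctly observe.
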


%
%


\section{Extremals on Compact Surfaces: Notations and Prelimiaries}\label{sez4}
Let $(\Sigma,g)$ be a smooth, closed Riemannian surface. In this section, and in the rest of the paper, we will fix $p_1,\ldots,p_m\in \Sigma$ and consider a positive function $h\in C^1(\Sigma\bs \{p_1,\ldots,p_m\})$ satisfying \eqref{hh}.  More precisely, denoting by $d$ the Riemannian distance on $(\Sigma, g)$ and by $B_r$ the corresponding metric ball, we will assume that for some $ \delta>0 $,
\begin{equation}\label{hhh}
\frac{h}{d(\;\cdot\;,p_i)^{2\alpha_i}} \in C^1_+(B_\delta(p_i)):=\left\{ f\in C^1(B_\delta(p_i)) \;:\;f>0\right\}\quad \mbox{ for } i=1,\ldots,m.
\end{equation}
In order to distinguish the singular points $p_1,\ldots,p_m$ from the regular ones, we introduce a singularity index function
\begin{equation}\label{ind}
\alpha(x):=\begin{Si}{cl}
\alpha_i & \mbox{ if } x=p_i\\
0 & x\in\Sigma\bs\{p_1,\ldots,p_m\} .
\end{Si}
\end{equation}
Clearly condition \eqref{hhh} implies that the limit
\begin{equation}\label{KK}
K(p):= \lim_{q\to p} \frac{h(q)}{d(q,p)^{2\alpha(p)}}
\end{equation}
exists and is strictly positive for any $p\in \Sigma$. We will study functionals of the form \eqref{E} on the space
$$
\mathcal{H}:=\left\{u\in H^1(\Sigma) \;:\; \int_{\Sigma}|\nabla u|^2 dv_g \le 1,\; \int_{\Sigma} u\; dv_{g}=0\right\}.
$$
To simplify the notation we will set
$$
\ov{\alpha}:=\min\left\{0,\min_{1\le i\le m} \alpha_i\right\}
$$
and
$$
\ov{\beta}:=4\pi(1+\ov{\alpha}).
$$
Given $s\ge 1$, the  symbols $\|\cdot\|_s$, $L^s(\Sigma)$ will denote the standard $L^s-$norm and $L^s-$space on $\Sigma$ with respect to the metric $g$.  Since we will deal with the singular metric $g_h = gh$ we will also consider
$$
\|u\|_{s,h}:=\int_{\Sigma} |u|^s dv_{g_h}=\int_{\Sigma} h \;|u|^s dv_{g} 
$$
and 
$$
L^s(\Sigma,g_h):=\{ u:\Sigma\ra \R\; \mbox{ Borel-measurable}, \; \|u\|_{s,h}<+\infty\}.
$$

In this section we will prove the existence of an extremal function for $E^{\beta,\lambda,q}_{\Sigma,h}$ for the subcritical case $\beta<\ov{\beta}$. We begin by stating some well known but useful Lemmas:

\begin{lemma}\label{integr}
If $u\in H^{1}(\Sigma)$ then $e^{u^2} \in L^s(\Sigma)\cap L^s(\Sigma,g_h)$, $\forall\; s\ge 1$.
\begin{proof}
From \eqref{hhh} we have $h\in L^r(\Sigma)$ for some $r>1$, hence it is sufficient to prove that  $e^{u^2} \in L^s(\Sigma)$, $\forall\; s\ge 1$. Moreover, since
$$
e^{s u^2} = e^{s (u-\ov{u})^2 +2 s (u- \ov{u})\ov{u}+ \ov{u}^2} \le   e^{2 s (u-\ov{u})^2 } e^{2 s \ov{u}^2},
$$
without loss of generality we can assume $\ov{u}=0$. 
 Take $\eps>0$ such that $2 s\eps\le 4\pi $ and a function $v\in C^1(\Sigma)$ satisfying $\|\nabla_g(v-u)\|^2_2\le \eps$  and $\int_{\Sigma} v \;dv_g=0$. By \eqref{MT3}, we have 
\begin{equation}\label{somma norme}
\|e^{2s (u-v)^2}\|_{1} + \|e^{2s\eps \frac{u^2}{\|\nabla u\|_2}}\|_{1}<+\infty.
\end{equation}
Note that
\begin{equation}\label{prod}
e^{s u^2} \le  e^{s(u-v)^2} e^{2s  u v}. 
\end{equation}
By \eqref{somma norme}, we have $e^{s(u-v)^2} \in L^2(\Sigma)$ and, since $v\in L^\infty(\Sigma)$,
$$
e^{2 s u v} \le e^{s \eps \frac{u^2}{\|\nabla u\|_2^2}} e^{C(\eps, s,\|\nabla u\|_2) v^2} \in L^2(\Sigma).
$$
Hence using \eqref{prod} and Holder's inequality we get $e^{s u^2}\in L^1(\Sigma)$. 
\end{proof}
\end{lemma}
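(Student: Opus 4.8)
The plan is to reduce the claim to Fontana's inequality \eqref{MT3}, handling the singular weight $h$ by a separate H\"older estimate. First I would dispose of the weight. Near each $p_i$, condition \eqref{hh} gives $h(x)\approx d(x,p_i)^{2\alpha_i}$ with $\alpha_i>-1$, so $d(\,\cdot\,,p_i)^{2\alpha_i r}$ is integrable on a small geodesic ball around $p_i$ for every $r<\tfrac1{|\alpha_i|}$, while $h$ is bounded away from the $p_i$; hence $h\in L^r(\Sigma)$ for some $r>1$. For any $s\ge1$, H\"older's inequality then yields
\[
\int_\Sigma h\,e^{s u^2}\,dv_g\ \le\ \|h\|_{L^r(\Sigma)}\Big(\int_\Sigma e^{r's\,u^2}\,dv_g\Big)^{1/r'},
\]
where $r'$ is the conjugate exponent of $r$. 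So it suffices to prove $e^{u^2}\in L^s(\Sigma)$ for every $s\ge1$, and then apply this with $s$ replaced by $r's$.

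To prove $e^{su^2}\in L^1(\Sigma)$, I would first normalise the mean of $u$: with $\ov u$ the average of $u$, the bound $u^2\le 2(u-\ov u)^2+2(\ov u)^2$ gives $e^{su^2}\le e^{2s(\ov u)^2}e^{2s(u-\ov u)^2}$ with the first factor constant, so we may assume $\ov u=0$ (the case $u$ constant being trivial). The obstruction is that \eqref{MT3} controls $\int_\Sigma e^{4\pi w^2}$ only for $w$ in the unit ball of $H^1(\Sigma)$ modulo constants, whereas $\|\nabla u\|_2$ may be large here. To bypass this, fix $\eps>0$ with $2s\eps\le4\pi$ and, by density of $C^1(\Sigma)$ in $H^1(\Sigma)$, pick $v\in C^1(\Sigma)$ with $\int_\Sigma v\,dv_g=0$ and $\|\nabla(u-v)\|_2^2\le\eps$. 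Then $w:=u-v$ has zero average and $\|\nabla w\|_2^2\le\eps$, so (excluding the trivial case $\nabla w\equiv0$) applying \eqref{MT3} to $w/\|\nabla w\|_2$ and using $2s\|\nabla w\|_2^2\le2s\eps\le4\pi$ shows $e^{2sw^2}\in L^1(\Sigma)$; since $v\in L^\infty(\Sigma)$, the bound $u^2\le 2w^2+2v^2$ finally gives $e^{su^2}\le e^{2s\|v\|_\infty^2}e^{2sw^2}\in L^1(\Sigma)$.

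Disposing of the weight and normalising the mean are routine; the step that requires care is the last one — guaranteeing that passing to the smooth approximation $v$ costs only a bounded multiplicative factor — and this is exactly what the $L^\infty$ bound on $v$ and the choice $2s\eps\le4\pi$ secure. A slightly sharper variant replaces the crude inequality $u^2\le2w^2+2v^2$ by the identity $u^2=(u-v)^2+2uv-v^2$, bounds the cross term $2suv$ by $s\eps\,u^2/\|\nabla u\|_2^2+C(v)$ via Young's inequality, applies \eqref{MT3} to both $w/\|\nabla w\|_2$ and $u/\|\nabla u\|_2$, and concludes with one more H\"older inequality.
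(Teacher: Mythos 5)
Your proof is correct and follows essentially the same route as the paper: reduce to $h\equiv 1$ via H\"older's inequality with $h\in L^r$, normalize the mean, approximate $u$ by a smooth $v$ with $\|\nabla(u-v)\|_2^2\le\eps$ small enough that \eqref{MT3} controls $e^{2s(u-v)^2}$, and absorb the bounded $v$. Your main argument is in fact a touch cleaner than the paper's: you use the pointwise bound $u^2\le 2(u-v)^2+2v^2$, which requires only one application of \eqref{MT3}, whereas the paper writes $u^2\le (u-v)^2+2uv$, bounds the cross term by Young's inequality (introducing a second application of \eqref{MT3} to $u/\|\nabla u\|_2$), and finishes with H\"older — exactly the ``sharper variant'' you describe in your closing paragraph.
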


\begin{lemma}\label{Lions}   
If $u_n\in \mathcal{H}$ and $u_n\rw u \neq 0$ weakly in $H^1(\Sigma)$,  then 
$$
\sup_n \int_{\Sigma} h e^{p\ov{ \beta} u^2_n }  dv_{g} <+\infty
$$
$\forall\; 1\le p<\frac{1}{1-\|\nabla u\|_{2}^2}$.
\end{lemma}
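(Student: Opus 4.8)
The plan is to exploit the fact that the weak limit $u\neq 0$ "carries away" a definite fraction of the Dirichlet energy, so that $u_n - u$ has Dirichlet energy strictly less than $1$, and then apply a concentration-compactness/Moser-Trudinger argument on the remainder. Concretely, write $w_n := u_n - u$. Since $u_n \rw u$ weakly in $H^1(\Sigma)$ we have the orthogonality expansion $\|\nabla u_n\|_2^2 = \|\nabla w_n\|_2^2 + \|\nabla u\|_2^2 + o(1)$, hence $\limsup_n \|\nabla w_n\|_2^2 \le 1 - \|\nabla u\|_2^2 =: \theta < 1$. Fix $p$ with $1 \le p < \frac{1}{1-\|\nabla u\|_2^2} = \frac 1\theta$; the point is to choose an auxiliary exponent so that $p\,\ov\beta\, w_n^2$ stays in the good Moser-Trudinger range even after pulling out cross terms.

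First I would handle the splitting $u_n^2 = (w_n + u)^2 = w_n^2 + 2 w_n u + u^2$, so that
\[
h\, e^{p\ov\beta u_n^2} = h\, e^{p\ov\beta w_n^2}\, e^{2p\ov\beta w_n u}\, e^{p\ov\beta u^2}.
\]
For the last factor, $e^{p\ov\beta u^2} \in L^{r_1}(\Sigma)$ for every $r_1$ by Lemma \ref{integr}. For the cross term I would use Young's inequality $2 w_n u \le \eta w_n^2 + \eta^{-1} u^2$ with a small parameter $\eta > 0$, so that $e^{2p\ov\beta w_n u} \le e^{\eta p \ov\beta w_n^2} e^{\eta^{-1} p \ov\beta u^2}$; again the $u$-factor is in every $L^{r}$ by Lemma \ref{integr}. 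Collecting, it suffices to bound $\int_\Sigma h\, e^{(1+\eta) p \ov\beta\, w_n^2}\, dv_g$ (with the remaining factors absorbed by Hölder, using $h \in L^{r_0}(\Sigma)$ for some $r_0 > 1$ from \eqref{hhh}). Since $\limsup_n \|\nabla w_n\|_2^2 \le \theta$ and $p\theta < 1$, I can pick $\eta$ small enough and a Hölder exponent $s$ slightly bigger than $1$ so that $s(1+\eta) p\, \theta < 1$ still holds; writing $w_n = \|\nabla w_n\|_2 \cdot \frac{w_n}{\|\nabla w_n\|_2}$ and noting $\ov\beta = 4\pi(1+\ov\alpha) \le 4\pi$, a function in $\mathcal H$-type normalization, the relevant exponent in front of $\bigl(\frac{w_n - \ov{w_n}}{\|\nabla w_n\|_2}\bigr)^2$ is $\le 4\pi \cdot s(1+\eta)p\,\|\nabla w_n\|_2^2 < 4\pi$ for $n$ large, so Fontana's inequality \eqref{MT3} applies after correcting for the mean $\ov{w_n} = -\ov u$, which is bounded. (The correction $e^{C w_n \ov{w_n} + C \ov{w_n}^2}$ is again handled by another application of Young and Lemma \ref{integr}, since $\ov{w_n}$ is bounded.) This gives a uniform bound on $\int_\Sigma h\, e^{p\ov\beta u_n^2}\, dv_g$.

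The main obstacle is the bookkeeping of the several small parameters: one needs $\eta$, the Hölder exponents splitting $h\, e^{p\ov\beta w_n^2}$ from the $u$-factors, and the normalization factor $\|\nabla w_n\|_2^2 \to (\text{something} \le \theta)$ all to be compatible so that the final Moser-Trudinger exponent is strictly below $4\pi$; since $p$ is only assumed $< 1/\theta$ this is a genuine strict inequality and there is room, but the estimates must be arranged in the right order (pull out $u$ first, then normalize $w_n$, then apply \eqref{MT3}). A secondary technical point is that \eqref{MT3} is stated for $H^1(\Sigma)$ functions with zero mean and unit Dirichlet norm, so one must carefully reduce to that case; this is routine but must be done. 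Note this argument does not use $u_n \rw u$ beyond the energy-splitting identity, which is the standard consequence of weak convergence in Hilbert space.
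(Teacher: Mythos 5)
Your decomposition is fundamentally the same as the paper's: split $u_n = (u_n-u) + u$, observe that $\limsup_n\|\nabla(u_n-u)\|_2^2 \le 1-\|\nabla u\|_2^2 < 1/p$, and push the "excess" part through a Moser--Trudinger inequality while absorbing cross terms by Young + H\"older + Lemma \ref{integr}. The conclusion is correct. Two execution details diverge from the paper, and one of them deserves a comment.

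First, the paper uses the one-sided inequality $u_n^2 \le (u_n-u)^2 + 2u_nu$ (dropping $u^2\ge 0$) and then Young's inequality on the cross term in the form $2ps'\ov\beta\, u_n u \le \tfrac{\ov\beta}{2}u_n^2 + C u^2$, i.e.\ Young against $u_n$ rather than against $w_n=u_n-u$. Because $\|\nabla u_n\|_2\le 1$, the factor $e^{\ov\beta u_n^2/2}$ lands exactly at the critical exponent and is uniformly bounded in $L^2(\Sigma,g_h)$ by \eqref{MT Troy} without any $\eta$-room. Your version Youngs against $w_n$, producing $e^{\eta p\ov\beta w_n^2}$ which must be absorbed into the main term, so you need the strict gap $p\theta<1$ twice (once for the H\"older exponent $s$, once for $\eta$). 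Both work; the paper's route is a bit cleaner because the cross-term estimate doesn't touch the subcritical margin.

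Second, and more substantively: you invoke Fontana's inequality \eqref{MT3} for $\int e^{\gamma w_n^2}$ and propose to peel off the weight $h$ by H\"older using $h\in L^{r_0}$. When $\ov\alpha<0$, condition \eqref{hhh} gives $h\in L^{r_0}$ only for $r_0 < -1/\ov\alpha$, so the conjugate exponent satisfies $r_0' > \frac{1}{1+\ov\alpha} > 1$ and cannot be made arbitrarily close to $1$. The correct constraint for \eqref{MT3} is then $r_0'\, s\,(1+\eta)\,p\,\theta\,(1+\ov\alpha) < 1$, not the $s(1+\eta)p\theta<1$ you wrote; since $r_0'(1+\ov\alpha)$ can be driven to $1^+$ and $p\theta<1$ strictly, the inequality can still be satisfied, but this extra factor must be tracked and you did not do so. The cleaner move --- and what the paper does --- is to apply the weighted inequality \eqref{MT Troy} directly, which already has $h$ inside and has critical exponent $\ov\beta=4\pi(1+\ov\alpha)$, so the condition reduces exactly to $s\,p\,\limsup\|\nabla w_n\|_2^2<1$, with no extra H\"older layer for $h$. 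Finally, a small point: since $u_n\in\mathcal H$ forces $\ov{u_n}=0$ and $u_n\to u$ in $L^2$ by compact embedding, $\ov u=0$, hence $\ov{w_n}=0$ exactly; the mean-correction you worry about is vacuous.
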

\begin{proof} Observe that
\begin{equation}\label{bleah}
e^{p\ov{\beta}u_n^2}\le e^{p\ov{\beta} (u_n-u)^2}  e^{2 p\ov{\beta} u_n u}. 
\end{equation}
Since 
$$
\frac{1}{p}>1-\|\nabla u\|_2^2\ge \|\nabla u_n\|^2_2-\|\nabla u\|_2^2 = \|\nabla (u_n-u)\|_2^2 +o(1) \quad \Longrightarrow \quad \limsup_{n\to \infty} \|\nabla (u_n-u)\|_2^2 <\frac{1}{p},
$$
by  \eqref{MT Troy} we get $\|e^{p\ov{\beta}(u_n-u)^2}\|_{s,h}\le C$ for some $s>1$. Taking $\frac{1}{s}+\frac{1}{s'}=1$ and using Lemma \ref{integr} we have
$$ e^{2 p s' \ov{\beta }u_n u} \le e^{\frac{\ov{\beta}}{2} u_n^2} \;  e^{C_{s,\alpha,p} u^2} \in L^1(\Sigma,g_h)\qquad  \Longrightarrow \qquad \|e^{2p \ov{\beta} u_n u}\|_{s',h} \le C.$$
Thus from \eqref{bleah} we get $\|e^{p\ov{\beta}u_n^2}\|_{1,h}\le C$.
\end{proof}

Existence of extremals for $\beta<\ov{\beta}$ is a simple consequence of Lemma \ref{Lions} and Vitali's convergence Theorem. 

\begin{lemma}\label{subcr}
$\forall\; \beta \in (0,\ov{\beta})$, $\lambda\in [0,\lambda_q(\Sigma,g))$, $q> 1$ we have 
$$
\sup_{\mathcal{H}} E_{\Sigma,h}^{\beta,\lambda,q} <+\infty
$$
and  the supremum is attained. 
\end{lemma}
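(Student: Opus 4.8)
The plan is to run the direct method and use Lemma \ref{Lions} to rule out loss of exponential mass, concluding with Vitali's convergence theorem. Fix $\beta\in(0,\ov{\beta})$, $\lambda\in[0,\lambda_q(\Sigma,g))$, $q>1$, and let $u_n\in\mathcal{H}$ be a maximizing sequence, so that $E_{\Sigma,h}^{\beta,\lambda,q}(u_n)\to\sup_{\mathcal{H}}E_{\Sigma,h}^{\beta,\lambda,q}\in(0,+\infty]$. Since $\mathcal{H}$ is bounded in $H^1(\Sigma)$, up to a subsequence $u_n\rw u$ weakly in $H^1(\Sigma)$, $u_n\to u$ strongly in every $L^r(\Sigma)$ and a.e.; in particular $\int_\Sigma u\,dv_g=0$ and $\|\nabla u\|_2\le 1$, so $u\in\mathcal{H}$, and $\|u_n\|_q\to\|u\|_q$. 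Recall from \eqref{hhh} that $h\in L^r(\Sigma)$ for some $r>1$.

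The key step is to upgrade the integrability of $e^{\beta(1+\lambda\|u_n\|_q^2)u_n^2}$ to an exponent with a bit of slack. If $u=0$, then $\|u_n\|_q\to0$, hence $\beta(1+\lambda\|u_n\|_q^2)\to\beta<\ov{\beta}$; choosing $\beta'\in(\beta,\ov{\beta})$ and $s>1$ with $s\beta'<\ov{\beta}$, for $n$ large we have $\beta(1+\lambda\|u_n\|_q^2)\le\beta'$ and \eqref{MT Troy} gives $\sup_n\int_\Sigma h\,e^{s\beta' u_n^2}\,dv_g<+\infty$. If $u\neq0$, set $a:=\|\nabla u\|_2^2\in(0,1]$; since the Rayleigh quotient defining $\lambda_q(\Sigma,g)$ is scale invariant we have $\|u\|_q^2\le a/\lambda_q(\Sigma,g)$, and the function $t\mapsto\beta\big(1+\lambda t/\lambda_q(\Sigma,g)\big)(1-t)$ is strictly decreasing on $[0,1]$ exactly because $\lambda<\lambda_q(\Sigma,g)$, so that
\[
\beta\big(1+\lambda\|u\|_q^2\big)\big(1-a\big)\le\beta\Big(1+\tfrac{\lambda a}{\lambda_q(\Sigma,g)}\Big)(1-a)<\beta<\ov{\beta}.
\]
Hence we may choose $p\in\big(1,\tfrac{1}{1-a}\big)$ and $s>1$ with $s\,\beta(1+\lambda\|u\|_q^2)<p\,\ov{\beta}$; for $n$ large $s\,\beta(1+\lambda\|u_n\|_q^2)<p\,\ov{\beta}$, so Lemma \ref{Lions} yields $\sup_n\int_\Sigma h\,e^{s\beta(1+\lambda\|u_n\|_q^2)u_n^2}\,dv_g\le\|h\|_1+\sup_n\int_\Sigma h\,e^{p\ov{\beta}u_n^2}\,dv_g<+\infty$. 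In both cases $e^{\beta(1+\lambda\|u_n\|_q^2)u_n^2}$ is bounded in $L^s(\Sigma,g_h)$ for some $s>1$; since $v_{g_h}(\Sigma)=\|h\|_1<+\infty$, Hölder's inequality then shows that $h\,e^{\beta(1+\lambda\|u_n\|_q^2)u_n^2}$ is uniformly integrable on $(\Sigma,dv_g)$.

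Since moreover $h\,e^{\beta(1+\lambda\|u_n\|_q^2)u_n^2}\to h\,e^{\beta(1+\lambda\|u\|_q^2)u^2}$ a.e. (from $u_n\to u$ a.e. and $\|u_n\|_q\to\|u\|_q$), Vitali's convergence theorem gives $E_{\Sigma,h}^{\beta,\lambda,q}(u_n)\to E_{\Sigma,h}^{\beta,\lambda,q}(u)$. As $u\in\mathcal{H}$ and $E_{\Sigma,h}^{\beta,\lambda,q}(u)<+\infty$ by Lemma \ref{integr}, we conclude $\sup_{\mathcal{H}}E_{\Sigma,h}^{\beta,\lambda,q}=E_{\Sigma,h}^{\beta,\lambda,q}(u)<+\infty$ and that $u$ is an extremal.

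The step I expect to be the main obstacle is the non-vanishing case $u\neq0$: one has to verify that, even after reserving room both for an exponent $s>1$ (needed to pass from an $L^s$ bound to uniform integrability) and for the convergence $\|u_n\|_q\to\|u\|_q$, the quantity $s\,\beta(1+\lambda\|u_n\|_q^2)$ still lies below the threshold $p\,\ov{\beta}$ permitted by Lemma \ref{Lions}. This is precisely where the assumption $\lambda<\lambda_q(\Sigma,g)$ enters, through the monotonicity of $t\mapsto\beta(1+\lambda t/\lambda_q(\Sigma,g))(1-t)$ on $[0,1]$; everything else — weak/strong compactness, a.e.\ convergence, Vitali — is routine.
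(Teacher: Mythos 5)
Your proof is correct and follows essentially the same route as the paper: take a maximizing sequence with weak limit $u$, split according to whether $u=0$ or $u\neq 0$, use \eqref{MT Troy} in the vanishing case and Lemma \ref{Lions} in the non-vanishing case to get an $L^s$ bound ($s>1$) on the exponential, and conclude with Vitali. Your explicit monotonicity argument for $t\mapsto\beta(1+\lambda t/\lambda_q)(1-t)$ is just an unpacked version of the paper's chain of inequalities showing $(1-\|\nabla u\|_2^2)(1+\lambda\|u_n\|_q^2)<1$, which is likewise used to fit the exponent below the threshold of Lemma \ref{Lions}.
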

\begin{proof}
Let $u_n\in \mathcal{H}$ be a maximizing sequence for $E_{\Sigma,h}^{\beta,\lambda,q}$, and assume $u_n \rw u$ weakly in $H^1(\Sigma)$. We claim that $e^{\beta u_n^2(1+\lambda\|u_n\|_q^2)}$ is uniformly bounded in $L^p(\Sigma,g_h)$ for some $p>1$. In particular by Vitali's convergence theorem we get $E^{\beta,\lambda,q}_{\Sigma,h}(u_n)\ra E^{\beta,\lambda,q}_{\Sigma,h}(u)$ with $E^{\beta,\lambda,q}_{\Sigma,h}(u)<+\infty$.  Hence 
$
E^{\beta,\lambda,q}_{\Sigma,h}(u) = \sup_{\mathcal{H}} E^{\beta,\lambda,q}_{\Sigma,h}(u),
$
proving the conclusion. 

If $u=0$, then
$$
\beta (1+\lambda \|u_n\|_q^2)\ra \beta <\ov{\beta},
$$
and the claim is proved taking $1<p<\frac{\ov{\beta}}{\beta}$ and using \eqref{MT Troy}. If $u\neq 0$, since 
$$
(1-\|\nabla u\|_2^2)(1+\lambda\|u_n\|_q^2) \le 1-\|\nabla u\|_2^2+\lambda\|u\|_q^2 +o(1)\le  1-(\lambda_q(\Sigma)-\lambda)\|u\|_q^2 +o(1)<1
$$
we can find $p>1$  such that $\dis{\limsup_{n\to \infty}p(1+\lambda\|u_n\|_q^2) <\frac{1}{1-\|\nabla u\|_2^2}}$, and the claim follows from Lemma \ref{Lions}.
\end{proof}

The behaviour of extremal functions as $\beta\to\ov\beta$ will be studied in the next section. As for now we can study the convergence of the suprema.
\begin{lemma}\label{conv sup} As $\beta \nearrow \ov{\beta}$ we have
$$
 \sup_{\mathcal{H}} E_{\Sigma,h}^{\beta,\lambda,q} \ra \sup_{\mathcal{H}} E_{\Sigma,h}^{\ov{\beta},\lambda,q}.
$$
\end{lemma}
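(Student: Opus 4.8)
The plan is to prove the continuity of the suprema by establishing two inequalities. The upper semicontinuity direction, namely $\limsup_{\beta\nearrow\ov\beta}\sup_{\mathcal H}E_{\Sigma,h}^{\beta,\lambda,q}\le\sup_{\mathcal H}E_{\Sigma,h}^{\ov\beta,\lambda,q}$, should follow almost immediately: for any fixed $u\in\mathcal H$ the integrand $h\,e^{\beta u^2(1+\lambda\|u\|_q^2)}$ is monotone increasing in $\beta$, so $E_{\Sigma,h}^{\beta,\lambda,q}(u)\le E_{\Sigma,h}^{\ov\beta,\lambda,q}(u)\le\sup_{\mathcal H}E_{\Sigma,h}^{\ov\beta,\lambda,q}$, and taking the supremum over $u$ and then the limit in $\beta$ gives the claim (in fact $\beta\mapsto\sup_{\mathcal H}E_{\Sigma,h}^{\beta,\lambda,q}$ is nondecreasing). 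The content is therefore entirely in the lower semicontinuity direction: $\liminf_{\beta\nearrow\ov\beta}\sup_{\mathcal H}E_{\Sigma,h}^{\beta,\lambda,q}\ge\sup_{\mathcal H}E_{\Sigma,h}^{\ov\beta,\lambda,q}$.

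For the lower bound, first I would fix $\eps>0$ and pick $u\in\mathcal H$ with $E_{\Sigma,h}^{\ov\beta,\lambda,q}(u)\ge\sup_{\mathcal H}E_{\Sigma,h}^{\ov\beta,\lambda,q}-\eps$; note $E_{\Sigma,h}^{\ov\beta,\lambda,q}(u)<+\infty$ by Lemma~\ref{integr} (since $\ov\beta u^2(1+\lambda\|u\|_q^2)\le C u^2$, and $h\in L^r$ for some $r>1$, Hölder applies). Then I would like to say that $E_{\Sigma,h}^{\beta,\lambda,q}(u)\to E_{\Sigma,h}^{\ov\beta,\lambda,q}(u)$ as $\beta\nearrow\ov\beta$ for this fixed $u$. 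This is a dominated convergence statement: the integrands increase pointwise to $h\,e^{\ov\beta u^2(1+\lambda\|u\|_q^2)}$ and are all dominated by that $L^1$ majorant, so monotone (or dominated) convergence gives $E_{\Sigma,h}^{\beta,\lambda,q}(u)\to E_{\Sigma,h}^{\ov\beta,\lambda,q}(u)\ge\sup_{\mathcal H}E_{\Sigma,h}^{\ov\beta,\lambda,q}-\eps$. Since $\sup_{\mathcal H}E_{\Sigma,h}^{\beta,\lambda,q}\ge E_{\Sigma,h}^{\beta,\lambda,q}(u)$, taking $\liminf$ in $\beta$ and then letting $\eps\to 0$ finishes the proof.

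Honestly there is no serious obstacle here; the statement is a soft convergence fact. The only point requiring a word of care is the finiteness $E_{\Sigma,h}^{\ov\beta,\lambda,q}(u)<+\infty$ for a \emph{fixed} $u\in\mathcal H$ (as opposed to uniform boundedness, which is exactly the hard content of Theorem~\ref{teo 3} and is \emph{not} available at this stage and not needed): this follows purely from $u\in H^1(\Sigma)$ via Lemma~\ref{integr}, because $\ov\beta(1+\lambda\|u\|_q^2)$ is just a finite constant multiplying $u^2$. Once that is in hand, the argument is a two-line sandwich using monotonicity in $\beta$ on one side and pointwise monotone convergence with a fixed $L^1$ dominating function on the other. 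I would present it in this order: (i) monotonicity of $\beta\mapsto\sup_{\mathcal H}E_{\Sigma,h}^{\beta,\lambda,q}$ gives the limsup bound; (ii) near-maximizer $u$ for $\ov\beta$, finiteness by Lemma~\ref{integr}; (iii) monotone convergence on the fixed $u$; (iv) conclude.
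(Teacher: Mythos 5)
Your proposal is correct and follows essentially the same route as the paper: monotonicity in $\beta$ gives the $\limsup$ bound, and monotone convergence applied to a fixed test function gives the $\liminf$ bound. The only cosmetic difference is that you pick an $\eps$-near-maximizer and pass $\eps\to 0$, whereas the paper simply notes the $\liminf$ inequality holds for every $v\in\mathcal H$ and takes the supremum over $v$ (so the finiteness remark, while correct, is not actually needed for the MCT step).
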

\begin{proof}
Clearly, since $\beta<\ov{\beta}$, we have
$$
\limsup_{\beta \nearrow \ov{\beta}}\; \sup_{\mathcal{H}} E_{\Sigma,h}^{\beta,\lambda,q}\le  \sup_{\mathcal{H}} E_{\Sigma,h}^{\ov{\beta},\lambda,q}.
$$
On the other hand, by monotone convergence theorem we have 
$$
\liminf_{\beta \nearrow \ov{\beta}} \sup_{\mathcal{H}} E_{\Sigma,h}^{\beta,\lambda,q}\ge \liminf_{\beta \nearrow \ov\beta} E_{\Sigma,h}^{\beta,\lambda,q}(v)=   E_{\Sigma,h}^{\ov{\beta},\lambda,q}(v) \qquad\forall\; v\in \mathcal{H},
$$
which gives 
$$
\liminf_{\beta \nearrow \ov{\beta}}  \sup_{\mathcal{H}} E_{\Sigma,h}^{\beta,\lambda,q}\ge   \sup_{\mathcal{H}} E_{\Sigma,h}^{\ov{\beta},\lambda,q}.
$$
\end{proof}

We conclude this section with some Remarks concerning isothermal coordinates and  Green's functions.  We recall that, given any point $p\in \Sigma$, we can always find a small neighborhood $\Omega$ of $p$ and a local chart 
\begin{equation}\label{psi1}
\psi: \Omega \ra D_{\delta_0}\subset\R^2
\end{equation} 
such that 
\begin{equation}
\psi (p)=0
\end{equation}
and 
\begin{equation}\label{pullb}
(\psi^{-1})^* g = e^{\ph} |dx|^2
\end{equation}
where \begin{equation}
\ph\in C^\infty(\ov{D_{\delta_0}}) \qquad \mbox{and}\qquad\ph(0)=0.
\end{equation} For any $\delta<\delta_0 \;$ we will denote $\Omega_\delta:=\psi^{-1} (D_{\delta})$.  More generally if $D_r(x)\subseteq D_{\delta_0}$ we define $\Omega_r(\psi^{-1}(x)):= \psi^{-1}(D_r(x))$.   We stress that \eqref{KK} and \eqref{pullb} also imply 
\begin{equation}
(\ph^{-1})^* g_h = |x|^{2\alpha(p)} V(x)e^{\ph} |dx|^2.
\end{equation}
with \begin{equation}\label{psi2}
0<V\in C^0(\ov{D_{\delta_0}})\qquad \mbox{and}\qquad V(0)=K(p).
\end{equation}

For any $p\in \Sigma$ we denote $G_p^\lambda$ the solution of 
\begin{equation}\label{defGR}
\begin{Si}{l}
\dis{-\Delta_g G^\lambda_p= \delta_p + \lambda \|G^\lambda_p\|_q^{2-q}|G^\lambda_p|^{q-2} G^\lambda_p- \frac{1}{|\Sigma|} \left(1+ \lambda \|G^\lambda_p\|_q^{2-q}\int_{\Sigma} |G^\lambda_p|^{q-2}G^\lambda_p dv_g \right)}\\
\dis{\int_{\Sigma} G^\lambda_p dv_g =0}.
\end{Si}
\end{equation}
In local coordinates satisfying \eqref{psi1}-\eqref{psi2} we have 
\begin{equation}\label{exp Gr}
G_p^\lambda (\psi^{-1}(x)) = -\frac{1}{2\pi } \log |x| +A_p^\lambda +\xi(x)
\end{equation}
with $\xi \in C^1(\ov{D_{\delta_0}})$ and $\xi(x)=O(|x|)$. Observe that $G_p^0$ is the standard Green's function for $-\Delta_g$.

\begin{lemma}\label{as0}
As $\lambda \to 0$ we have $G^\lambda_p \ra G^0_p$ in  $L^s(\Sigma)$ $\forall\;s$ and $A^\lambda_p \ra A_p^0$. 
\end{lemma}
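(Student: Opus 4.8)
The plan is to pass to the limit in the defining problem \eqref{defGR} using standard elliptic estimates, exploiting the fact that the nonlinear terms on the right-hand side are lower order and remain bounded as $\lambda\to 0$. First I would establish a uniform bound: multiplying \eqref{defGR} by $G^\lambda_p$ and integrating (using $\int_\Sigma G^\lambda_p\,dv_g=0$ and the Poincar\'e--Sobolev inequalities on $(\Sigma,g)$) one controls $\|G^\lambda_p\|_{W^{1,s}}$ for every $s<2$ in terms of $\|\delta_p\|_{W^{-1,s}}$ plus a constant multiple of $\lambda$ times powers of $\|G^\lambda_p\|_q$; since $q>1$ the homogeneity of the extra terms is exactly degree one in $G^\lambda_p$, and a short bootstrap (absorbing the $\lambda$-term into the left-hand side for $\lambda$ small) yields $\|G^\lambda_p\|_{W^{1,s}(\Sigma)}\le C$ uniformly for $\lambda\in[0,\lambda_0]$.

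Next I would extract a weakly convergent subsequence $G^{\lambda}_p\rightharpoonup G_*$ in $W^{1,s}(\Sigma)$, hence strongly in $L^s(\Sigma)$ for all $s$ by Rellich--Kondrachov (note $W^{1,s}\hookrightarrow L^r$ compactly for suitable $r$, and one iterates/uses the Moser--Trudinger-type integrability from Lemma \ref{integr} to upgrade to all $s$). Strong $L^q$ convergence gives $\|G^\lambda_p\|_q\to\|G_*\|_q$ and, since $\lambda\to 0$, every term carrying a factor $\lambda$ on the right-hand side of \eqref{defGR} tends to $0$ in the relevant negative Sobolev space; the distributional term $\delta_p$ and the constant $-1/|\Sigma|$ are unchanged. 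Passing to the limit in the distributional formulation, $G_*$ solves $-\Delta_g G_* = \delta_p - 1/|\Sigma|$ with $\int_\Sigma G_*\,dv_g = 0$, which is exactly the problem defining $G^0_p$; by uniqueness of the Green's function $G_* = G^0_p$, and since the limit is independent of the subsequence the full family converges: $G^\lambda_p\to G^0_p$ in $L^s(\Sigma)$ for every $s$.

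Finally, for the convergence of the regular part I would localize near $p$ using the isothermal chart \eqref{psi1}--\eqref{pullb}. Writing $w^\lambda := G^\lambda_p(\psi^{-1}(\cdot)) + \frac{1}{2\pi}\log|x|$, equation \eqref{defGR} becomes $-\Delta w^\lambda = f^\lambda$ on $D_{\delta_0}$ with $f^\lambda$ bounded in $L^s(D_{\delta_0})$ (it is $e^\varphi$ times the bounded right-hand side terms) and converging in $L^s$ to $f^0$; by interior elliptic regularity $w^\lambda\to w^0$ in $W^{2,s}_{loc}$, hence in $C^0_{loc}$ for $s>2$, and evaluating at $x=0$ gives $A^\lambda_p = w^\lambda(0) \to w^0(0) = A^0_p$. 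The main obstacle is the uniform a priori bound in the first step: one must check that the nonlocal, nonlinearly-normalized terms $\lambda\|G^\lambda_p\|_q^{2-q}|G^\lambda_p|^{q-2}G^\lambda_p$ really are controlled with the right homogeneity so that the $\lambda$-contribution can be absorbed — here the precise scaling $\|G^\lambda_p\|_q^{2-q}\cdot|G^\lambda_p|^{q-1}$ has total $L^1$-type size comparable to $\|G^\lambda_p\|_q$, which closes the estimate; everything after that is routine compactness and elliptic regularity.
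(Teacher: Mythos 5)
Your proposal is correct in outline but takes a genuinely different, and more circuitous, route than the paper. The paper's proof never estimates $G^\lambda_p$ itself in a Sobolev space; it immediately subtracts $G^0_p$ and works with the difference $G^\lambda_p - G^0_p$, which satisfies a regular elliptic equation with right-hand side $\lambda\|G^\lambda_p\|_q^{2-q}|G^\lambda_p|^{q-2}G^\lambda_p - c_\lambda$. The key observation is that $\left\| \|G^\lambda_p\|_q^{2-q}|G^\lambda_p|^{q-2}G^\lambda_p\right\|_{q/(q-1)} = \|G^\lambda_p\|_q$, so global elliptic regularity yields $\|G^\lambda_p - G^0_p\|_\infty \le \|G^\lambda_p - G^0_p\|_{W^{2,q/(q-1)}} \le C\lambda\|G^\lambda_p\|_q$ directly (using $W^{2,s}(\Sigma)\hookrightarrow L^\infty$ for any $s>1$ in dimension $2$). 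A one-line absorption then bounds $\|G^\lambda_p\|_q$ uniformly for small $\lambda$, giving $\|G^\lambda_p - G^0_p\|_\infty = O(\lambda)$, which proves both the $L^s$-convergence and $A^\lambda_p = A^0_p + (G^\lambda_p - G^0_p)(p) \to A^0_p$ at once, with no compactness extraction, no identification-of-the-limit argument, and no separate local analysis near $p$.

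Two points in your write-up need repair even within your own scheme. First, the opening move \emph{``multiplying \eqref{defGR} by $G^\lambda_p$ and integrating''} is not available: the pairing of $\delta_p$ with $G^\lambda_p$ is $G^\lambda_p(p)=+\infty$ because of the logarithmic singularity, so $\delta_p\notin H^{-1}(\Sigma)$ and the energy method collapses. You in fact want the $L^p$-elliptic (or duality) estimates that you gesture at with ``$\|\delta_p\|_{W^{-1,s}}$''; the phrasing should be corrected to match, since as written the two halves of the sentence describe incompatible techniques. Second, in the localization step the Sobolev embedding you invoke is off: in two dimensions $W^{2,s}\hookrightarrow C^0$ already for $s>1$, not $s>2$; since $s=q/(q-1)>1$ for every $q>1$, the conclusion stands, but the stated threshold is wrong and, taken literally, would restrict you to $q<2$. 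These are fixable, and the remainder of your argument (uniform $W^{1,s}$ bound for $s<2$, Rellich compactness, identification of the weak limit as $G^0_p$ by uniqueness, then interior regularity for the regular part $w^\lambda = G^\lambda_p\circ\psi^{-1} + \frac{1}{2\pi}\log|x|$) is sound; it is simply longer and does not exhibit the explicit rate $\|G^\lambda_p - G^0_p\|_\infty = O(\lambda)$ that the subtraction trick delivers for free.
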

\begin{proof}
Let us denote $\dis{c_\lambda:= \frac{\lambda}{|\Sigma|} \|G^\lambda_p\|_q^{2-q}\int_{\Sigma} |G^\lambda_p|^{q-2}G^\lambda_p dv_g}$. Observe that 
$$
-\Delta_g (G^\lambda_p-G_p^0)=\lambda \|G^\lambda_p\|_q^{2-q}|G^\lambda_p|^{q-2} G^\lambda_p-c_\lambda.$$
Since
$$
\left\| \|G^\lambda_p\|_q^{2-q}|G^\lambda_p|^{q-2} G^\lambda_p\right\|_{\frac{q}{q-1}} = \|G_p^\lambda\|_q 
$$
by elliptic estimates we find 
\begin{equation}\label{elli}
\|G_p^\lambda-G^0_p\|_{\infty}\le \|G_p^\lambda-G^0_p\|_{W^{2,\frac{q}{q-1}}(\Sigma)} \le C \lambda \|G_p^\lambda\|_q.
\end{equation}
In particular
$$
\|G_p^\lambda\|_q \le \|G^0_p\|_q +\|G_p^\lambda-G^0_p\|_q \le\|G_p^0\|_q +  C \|G_p^\lambda-G^0_p\|_\infty \le \|G_p^0\|_q + C \lambda   \|G_p^\lambda\|_q,
$$
hence for sufficiently small $\lambda$ we have 
$$
\|G_p^\lambda\|_q \le C \|G_p^0\|_q. 
$$
Thus by \eqref{elli}, as $\lambda \to 0$ we find 
$$\|G_p^\lambda-G_p^0\|_\infty \ra 0.$$
In particular $G_p^\lambda\ra G_p^0$ in $L^s$ for any $s>1$.  Since 
$ A^\lambda_p - A^0_p = (G_p^\lambda - G^0_p)(p)$ we also get $A^\lambda_p \to A_p^0$.
\end{proof}

\begin{lemma}\label{Grad Gr} Fix $p\in\Sigma$ and let $(\Omega,\psi)$ be a local chart satisfying \eqref{psi1}-\eqref{psi2}. As $\delta\to 0$ we have 
$$
\int_{\Sigma \bs \Omega_\delta } |\nabla G_p^\lambda|^2 dv_g = -\frac{1}{2\pi} \log \delta +A^\lambda_p +\lambda\| G_p^\lambda\|^2_q   + O(\delta|\log\delta|).
$$
\end{lemma}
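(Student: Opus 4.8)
The plan is to compute $\int_{\Sigma\setminus\Omega_\delta}|\nabla G_p^\lambda|^2\,dv_g$ by an integration by parts, exploiting the PDE \eqref{defGR} satisfied by $G_p^\lambda$ on $\Sigma\setminus\Omega_\delta$ and the fact that the singular source $\delta_p$ has been excised. First I would write, using the divergence theorem on the surface-with-boundary $\Sigma\setminus\Omega_\delta$,
\[
\int_{\Sigma\setminus\Omega_\delta}|\nabla G_p^\lambda|^2\,dv_g
= -\int_{\Sigma\setminus\Omega_\delta} G_p^\lambda\,\Delta_g G_p^\lambda\,dv_g
- \int_{\partial\Omega_\delta} G_p^\lambda\,\partial_\nu G_p^\lambda\,d\sigma_g,
\]
where $\nu$ is the outward normal to $\Sigma\setminus\Omega_\delta$ (i.e.\ pointing into $\Omega_\delta$) and $d\sigma_g$ the induced length element. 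On $\Sigma\setminus\Omega_\delta$ the distribution $\delta_p$ vanishes, so by \eqref{defGR} the bulk term becomes
\[
-\int_{\Sigma\setminus\Omega_\delta} G_p^\lambda\Big(\lambda\|G_p^\lambda\|_q^{2-q}|G_p^\lambda|^{q-2}G_p^\lambda - c_\lambda\Big)\,dv_g
= -\lambda\|G_p^\lambda\|_q^{2} + O(\delta^2\log^2\delta),
\]
since $\int_{\Sigma\setminus\Omega_\delta}G_p^\lambda\,dv_g = -\int_{\Omega_\delta}G_p^\lambda\,dv_g = O(\delta^2|\log\delta|)$ by \eqref{exp Gr} and $\|G_p^\lambda\|_q^{2-q}\int_\Sigma|G_p^\lambda|^{q-2}G_p^\lambda\,dv_g$ is a fixed constant; the remainder from integrating over $\Omega_\delta$ instead of $\Sigma$ in the first integral is likewise $O(\delta^2\log^2\delta)$.

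Wait --- the sign: note $-\int_{\Sigma\setminus\Omega_\delta}G_p^\lambda\cdot\lambda(\cdots)\,dv_g = -\lambda\|G_p^\lambda\|_q^{2-q}\int_\Sigma|G_p^\lambda|^q\,dv_g + o(1) = -\lambda\|G_p^\lambda\|_q^{2} + o(1)$, so this contributes $-\lambda\|G_p^\lambda\|_q^2$, and moving it to the left-hand side will eventually produce the stated $+\lambda\|G_p^\lambda\|_q^2$. The heart of the computation is the boundary integral over $\partial\Omega_\delta$. Here I would pass to the isothermal chart $\psi$ of \eqref{psi1}--\eqref{psi2}: since the Dirichlet integral and the problem are conformally covariant in dimension two, $\partial\Omega_\delta$ corresponds to the Euclidean circle $\{|x|=\delta\}$, and using the expansion \eqref{exp Gr}, $G_p^\lambda(\psi^{-1}(x)) = -\tfrac{1}{2\pi}\log|x| + A_p^\lambda + \xi(x)$ with $\xi(x)=O(|x|)$, one computes $\partial_\nu G_p^\lambda$ along $\{|x|=\delta\}$ (the outward normal for $\Sigma\setminus\Omega_\delta$ is $-\partial_r$ in these coordinates) to be $\tfrac{1}{2\pi\delta} + O(1)$, while $G_p^\lambda = -\tfrac{1}{2\pi}\log\delta + A_p^\lambda + O(\delta)$ is essentially constant on the circle. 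The circle has $g$-length $2\pi\delta\,e^{\varphi/2}= 2\pi\delta(1+O(\delta))$ since $\varphi(0)=0$. Multiplying,
\[
-\int_{\partial\Omega_\delta} G_p^\lambda\,\partial_\nu G_p^\lambda\,d\sigma_g
= \Big(-\tfrac{1}{2\pi}\log\delta + A_p^\lambda\Big)\cdot\tfrac{1}{2\pi\delta}\cdot 2\pi\delta + O(\delta|\log\delta|)
= -\tfrac{1}{2\pi}\log\delta + A_p^\lambda + O(\delta|\log\delta|).
\]
Combining the two contributions and absorbing $-\lambda\|G_p^\lambda\|_q^2$ to the left gives exactly the claimed identity.

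The main obstacle I anticipate is bookkeeping the error terms cleanly: one must check that the $O(|x|)$ remainder $\xi$ and its gradient (which is $O(1)$, not better, near $0$ in general — though $\xi\in C^1(\overline{D_{\delta_0}})$ suffices) do not spoil the $O(\delta|\log\delta|)$ bound on the boundary integral, and that the nonlinear zeroth-order term $\lambda\|G_p^\lambda\|_q^{2-q}|G_p^\lambda|^{q-2}G_p^\lambda$ is integrable enough near $p$ for the bulk estimates — this is where \eqref{exp Gr} is used, since $|G_p^\lambda|^{q-1}$ behaves like $|\log|x||^{q-1}$, which is integrable against $dv_g$ and gives the $O(\delta^2|\log\delta|^{q})$-type corrections, all absorbed into $O(\delta|\log\delta|)$. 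A minor point to handle with care is orientation of the normal, to land the correct sign $-\tfrac{1}{2\pi}\log\delta$ (as opposed to $+$), which one checks against the leading behaviour of $G_p^\lambda$; everything else is routine two-dimensional potential theory.
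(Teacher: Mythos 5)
Your plan is exactly the paper's: integrate by parts over $\Sigma\setminus\Omega_\delta$, use the PDE \eqref{defGR} (with $\delta_p$ excised) to evaluate the bulk term, and use the expansion \eqref{exp Gr} to evaluate the boundary term. However, there are sign errors in the execution that you notice but do not actually resolve, and the hand-wave ``moving it to the left-hand side'' does not repair them.

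Concretely: with $\nu$ the \emph{outward} normal of $\Sigma\setminus\Omega_\delta$, Green's first identity reads
\[
\int_{\Sigma\setminus\Omega_\delta}|\nabla G_p^\lambda|^2\,dv_g
= -\int_{\Sigma\setminus\Omega_\delta} G_p^\lambda\,\Delta_g G_p^\lambda\,dv_g
\;+\;\int_{\partial\Omega_\delta} G_p^\lambda\,\partial_\nu G_p^\lambda\,d\sigma_g,
\]
with a \emph{plus} on the boundary term, not the minus you wrote. For the bulk term, \eqref{defGR} says $-\Delta_g G_p^\lambda = \lambda\|G_p^\lambda\|_q^{2-q}|G_p^\lambda|^{q-2}G_p^\lambda - \tfrac{1}{|\Sigma|} - c_\lambda$ away from $p$, so
\[
-\int_{\Sigma\setminus\Omega_\delta} G_p^\lambda\,\Delta_g G_p^\lambda\,dv_g
= \lambda\|G_p^\lambda\|_q^{2-q}\int_{\Sigma\setminus\Omega_\delta}|G_p^\lambda|^q\,dv_g
 - \Bigl(\tfrac{1}{|\Sigma|}+c_\lambda\Bigr)\int_{\Sigma\setminus\Omega_\delta}G_p^\lambda\,dv_g
= \lambda\|G_p^\lambda\|_q^2 + o(1),
\]
i.e.\ $+\lambda\|G_p^\lambda\|_q^2$, not $-\lambda\|G_p^\lambda\|_q^2$; your expression $-\int G_p^\lambda(\cdots)$ substitutes $-\Delta_g G_p^\lambda$ for $\Delta_g G_p^\lambda$ without flipping the sign, and also drops the $\tfrac{1}{|\Sigma|}$ term. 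For the boundary term you compute $\partial_\nu G_p^\lambda = \tfrac{1}{2\pi\delta}+O(1)$ correctly for $\nu=-\partial_r$, but then drop the overall minus sign you had placed in front of the integral. If you tracked all your signs consistently you would obtain the negative of the stated identity; the two independent sign slips must be corrected (not cancelled by a third, non-existent one). Once the signs are fixed, the rest of your argument — isothermal coordinates, $\varphi(0)=0$ so the circle has $g$-length $2\pi\delta(1+O(\delta))$, $\xi\in C^1$ controlling the $O(\delta|\log\delta|)$ remainder, and $|G_p^\lambda|^{q-1}\sim|\log|x||^{q-1}$ integrability — is correct and matches the paper.
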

\begin{proof}
Integrating by parts we have 
\begin{equation}\label{G1}
\int_{\Sigma\bs \Omega_\delta} |\nabla G_p^\lambda|^2 dv_g = -\int_{\Sigma\bs \Omega_\delta}\Delta_g G_p^\lambda \; G_p^\lambda dv_g -\int_{\partial\Omega_\delta} G_p^\lambda \D{G_p^\lambda}{\nu} d\sigma_g.
\end{equation}
For the first term, using the definition of $G_p^\lambda$  we get
\begin{equation}\label{G2}
\begin{split}
-\int_{\Sigma\bs \Omega_\delta}\Delta_g G_p^\lambda \; G_p^\lambda dv_g& = \lambda\|G_p^\lambda\|_q^{2-q} \int_{\Sigma\bs \Omega_{\delta}} |G_p^\lambda|^q dv_g   -\left(\frac{1}{|\Sigma|}+c_\lambda\right)\int_{\Sigma \bs \Omega_\delta} G_p^\lambda\; dv_g \\&
= \lambda \|G_p^\lambda\|_q^2 +o(1).
\end{split}
\end{equation} 
For the second term we use \eqref{exp Gr} to find 

\begin{equation}\label{G3}
-\int_{\partial\Omega_\delta} G_p^\lambda \D{G_p^\lambda}{\nu} d\sigma_g = -\frac{1}{2\pi} \log \delta +A^\lambda_p +O(\delta |\log \delta|). 
\end{equation}

\end{proof}

\section{Blow-up Analysis for the Critical Exponent.}\label{sez5}
In this section we will study the critical case $\beta = \ov{\beta}$.

Let us fix $q> 1, \lambda\in [0,\lambda_q(\Sigma,g))$ and take a  sequence $\beta_n \nearrow \ov{\beta}$ ( $\beta_n <\ov{\beta}$ for any $n\in \mathbb{N}$). To simplify the notation we will set $E_n:= E^{\beta_n,\lambda,q}_{\Sigma,h}$. By Lemma \ref{subcr}, for any $n$ we can take a function $u_n\in \mathcal{H}$ such that 
\begin{equation}\label{sup sub}
E_n(u_n)=\sup_{\mathcal{H}} E_n.
\end{equation}
Up to subsequences, we can always assume that 
\begin{equation}\label{wlim}
u_n \rw u_0 \qquad \mbox{ in } H^1(\Sigma) 
\end{equation}
and
\begin{equation}
\hspace{1cm}u_n \ra u_0 \quad \mbox{ in } L^s(\Sigma) \quad \forall\; s\ge 1. 
\end{equation}

\begin{lemma}\label{non0}
If $u_0 \neq 0$, then
\begin{equation}\label{goal}
E_n(u_n) \ra E_{\Sigma,h}^{\ov{\beta},\lambda,q}(u_0)<+\infty.
\end{equation} In particular
$$\sup_{\mathcal{H}}E_{\Sigma,h}^{\ov{\beta},\lambda,q}<+\infty$$
and $u_0$ is an extremal function. 
\end{lemma}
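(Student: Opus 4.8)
The plan is to show that when the weak limit $u_0$ is nontrivial, no concentration occurs and the convergence of the functionals follows from a uniform $L^p$ bound plus Vitali, exactly as in the subcritical Lemma \ref{subcr}. First I would observe that since $u_n\in\mathcal{H}$ with $u_n\rw u_0\neq 0$, lower semicontinuity of the Dirichlet norm gives $\|\nabla u_0\|_2^2>0$, so strict inequality $\|\nabla u_0\|_2^2>0$ holds. The key arithmetic point is the same splitting used in Lemma \ref{subcr}: using $\|\nabla u_n\|_2^2\le 1$, $u_n\to u_0$ in $L^q(\Sigma)$, and $\lambda<\lambda_q(\Sigma,g)$ (so that $\lambda\|u_0\|_q^2\le\lambda\lambda_q(\Sigma,g)^{-1}\|\nabla u_0\|_2^2<\|\nabla u_0\|_2^2$), one gets
\[
(1-\|\nabla u_0\|_2^2)(1+\lambda\|u_n\|_q^2)\le 1-\|\nabla u_0\|_2^2+\lambda\|u_0\|_q^2+o(1)<1.
\]
Hence there exists $p>1$ with $\limsup_{n\to\infty} p\,\beta_n(1+\lambda\|u_n\|_q^2)/\ov\beta<\frac{1}{1-\|\nabla u_0\|_2^2}$, and since $\beta_n<\ov\beta$ this also gives $\limsup_n p(1+\lambda\|u_n\|_q^2)<\frac{1}{1-\|\nabla u_0\|_2^2}$.

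Next I would apply Lemma \ref{Lions} with exponent $p\beta_n/\ov\beta$ (which for large $n$ is below the admissible threshold) to conclude $\sup_n\int_\Sigma h\,e^{p\beta_n u_n^2(1+\lambda\|u_n\|_q^2)}\,dv_g<+\infty$; strictly speaking one writes $\beta_n u_n^2(1+\lambda\|u_n\|_q^2)\le \ov\beta\, p' u_n^2$ for a suitable $p'<\frac{1}{1-\|\nabla u_0\|_2^2}$, which is precisely the form Lemma \ref{Lions} covers. Thus the integrands $h\,e^{\beta_n u_n^2(1+\lambda\|u_n\|_q^2)}$ are bounded in $L^p(\Sigma,g_h)$ for some $p>1$, so they are uniformly integrable. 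Since $u_n\to u_0$ a.e. (after passing to a further subsequence) and $\beta_n\to\ov\beta$, $\|u_n\|_q\to\|u_0\|_q$, the integrands converge a.e.\ to $h\,e^{\ov\beta u_0^2(1+\lambda\|u_0\|_q^2)}$. Vitali's convergence theorem then yields $E_n(u_n)\to E_{\Sigma,h}^{\ov\beta,\lambda,q}(u_0)$ and $E_{\Sigma,h}^{\ov\beta,\lambda,q}(u_0)<+\infty$.

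Finally, to see $u_0$ is an extremal, I would combine this with Lemma \ref{conv sup}: $E_n(u_n)=\sup_{\mathcal H}E_n\to\sup_{\mathcal H}E_{\Sigma,h}^{\ov\beta,\lambda,q}$, and the left side also tends to $E_{\Sigma,h}^{\ov\beta,\lambda,q}(u_0)$, so $E_{\Sigma,h}^{\ov\beta,\lambda,q}(u_0)=\sup_{\mathcal H}E_{\Sigma,h}^{\ov\beta,\lambda,q}<+\infty$; in particular $u_0\in\mathcal{H}$ (the constraint set is weakly closed) realizes the supremum. The main obstacle is making the exponent bookkeeping in the application of Lemma \ref{Lions} precise — one must absorb the factor $(1+\lambda\|u_n\|_q^2)$ and the ratio $\beta_n/\ov\beta$ into a single exponent strictly below $\frac{1}{1-\|\nabla u_0\|_2^2}$, which is exactly where the hypothesis $\lambda<\lambda_q(\Sigma,g)$ together with the strong $L^q$ convergence is used; everything else is a routine Vitali argument.
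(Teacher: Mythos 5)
Your argument is correct and reproduces the paper's short proof: argue as in Lemma~\ref{subcr} to get a uniform $L^p(\Sigma,g_h)$ bound on the integrands via Lemma~\ref{Lions}, pass to the limit with Vitali's theorem, and identify the limit with $\sup_{\mathcal{H}}E_{\Sigma,h}^{\ov{\beta},\lambda,q}$ via Lemma~\ref{conv sup}. One minor slip in the exponent bookkeeping: the implication should run from $\limsup_n p(1+\lambda\|u_n\|_q^2)<\frac{1}{1-\|\nabla u_0\|_2^2}$ to the inequality involving the extra factor $\beta_n/\ov{\beta}<1$, not the reverse as written, although the conclusion you draw is what is needed in either case.
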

\begin{proof}
If $u_0\neq 0$ we can argue as in Lemma \ref{subcr} to find $p>1$ such that   $e^{\beta_n u_n^2(1+\lambda \|u_n\|_q^2)}$ is uniformly bounded in $L^p(\Sigma,g_h)$.  
Vitali's convergence Theorem yields \eqref{goal}. 
Lemma \ref{conv sup} implies 
$$
\sup_{\mathcal{H}} E_{\Sigma,h}^{\ov{\beta},\lambda,q}=  E_{\Sigma,h}^{\ov{\beta},\lambda,q}(u_0)<+\infty. 
$$
\end{proof}

Thus it is sufficient to study the case $u_0=0$, which we will assume for the rest of this section. In the same spirit of Theorem \ref{teo 1} and \eqref{Flu}, we will prove the following sharp upper bound for $E_n(u_n)$. 


\begin{prop}\label{finale}
If $u_0=0$ we have
$$
\limsup_{n\to \infty} E_n(u_n)\le  \frac{\pi e}{1+\ov\alpha}\max_{p\in\Sigma,\, \alpha(p)=\ov\alpha}K(p) e^{\ov{\beta} A^\lambda_p} +|\Sigma|_{g_h}
$$
where $A^\lambda_p$ is defined as in \eqref{exp Gr} and $|\Sigma|_{g_h}:= \int_{\Sigma} h\; dv_g$.
\end{prop}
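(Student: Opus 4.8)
The plan is to carry out a blow-up analysis for the maximizing sequence $u_n$ in the critical case $u_0 = 0$, following the classical Carleson--Chang--Flucher scheme adapted to the singular weight $h$, and to reduce everything at the final scale to the local estimate in Corollary \ref{mi serve}.

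First I would locate the concentration point. Since $u_0 = 0$, standard Moser--Trudinger theory forces $c_n := \max_\Sigma |u_n| = u_n(x_n) \to +\infty$ (otherwise $\beta_n u_n^2(1+\lambda\|u_n\|_q^2)$ stays bounded and Vitali gives $E_n(u_n) \to |\Sigma|_{g_h}$, which is below the claimed bound); up to a subsequence $x_n \to p \in \Sigma$. A Lions-type argument (via \eqref{MT Troy} and Lemma \ref{Lions}) shows $|\nabla u_n|^2\, dv_g \rightharpoonup \delta_p$ weakly-$*$, so concentration happens at a single point. Passing to isothermal coordinates $(\Omega,\psi)$ around $p$ as in \eqref{psi1}--\eqref{psi2} and rescaling by $r_n \to 0$ chosen so that $r_n^2 c_n^2 h(x_n) e^{\beta_n c_n^2(1+\lambda\|u_n\|_q^2)}$ (the natural scaling of the bubble) is $O(1)$, the functions $\eta_n(x) := c_n(u_n(\psi^{-1}(r_n x)) - c_n)$ converge in $C^1_{loc}(\R^2)$ to the standard bubble $\eta(x) = -\frac{1}{2\pi(1+\alpha(p))}\log(1+\pi(1+\alpha(p)) K(p)|x|^2)$ (the entire solution of the Liouville equation with the weight $|x|^{2\alpha(p)}$), using the radial symmetrization/decay estimates of Section \ref{sez3} to rule out loss of mass. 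Standard bubble analysis then shows $\alpha(p) = \ov\alpha$ necessarily, and gives the "neck" energy estimate $c_n u_n \to G_p^\lambda$ in $C^1_{loc}(\Sigma\setminus\{p\})$ together with $\int_\Sigma |\nabla u_n|^2 = 1$ split as bubble-energy $+$ neck-energy.

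Next I would split $E_n(u_n)$ into three regions: (i) the bubble region $\Omega_{Lr_n}(x_n)$ for large fixed $L$, where the change of variables and dominated convergence give, as $n\to\infty$ then $L\to\infty$, a contribution $\to \frac{\pi}{1+\ov\alpha} e^{1 + \ov\beta A_p^\lambda} K(p)$ — here is where the constant $e$ and the Robin-type constant $A_p^\lambda$ enter, via the precise asymptotics $u_n(x_n)^2 \approx$ (something involving $A_p^\lambda$) extracted from matching the bubble to $G_p^\lambda$ using Lemma \ref{Grad Gr}; (ii) the neck region $\Omega_\delta(x_n)\setminus\Omega_{Lr_n}(x_n)$, whose contribution is $\to 0$ as $L\to\infty$, $\delta\to 0$ — this is controlled by Corollary \ref{mi serve} (the local Carleson--Chang estimate) together with the decay of $u_n$ there, and is the analogue of the key step in \cite{CC}; (iii) the region $\Sigma\setminus\Omega_\delta(x_n)$, where $u_n \to 0$ uniformly so $h e^{\beta_n u_n^2(\cdots)} \to h$ and the contribution tends to $\int_{\Sigma\setminus\Omega_\delta} h\, dv_g \to |\Sigma|_{g_h}$. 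Summing and taking $\limsup$ gives the bound with $K(p) e^{\ov\beta A_p^\lambda}$ for the particular $p$; taking the maximum over admissible $p$ (those with $\alpha(p) = \ov\alpha$) yields the stated inequality.

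The main obstacle I expect is the neck-region estimate (ii): showing that the integral over the annulus $\Omega_\delta(x_n)\setminus\Omega_{Lr_n}(x_n)$ is negligible requires a careful Pohozaev-type identity or a sharp pointwise bound on $u_n$ interpolating between the bubble profile and the Green's function profile, and it is precisely here that the competition between the singular weight $|x|^{2\alpha(p)}$ and the exponential nonlinearity must be tracked with the right powers. The second delicate point is extracting the exact asymptotic value of $c_n u_n(x_n) - $ (leading term), i.e. proving $c_n^2 = -\frac{1}{2\pi}\log r_n^2 + A_p^\lambda + o(1)$ (up to normalization), which feeds directly into the constant $e^{\ov\beta A_p^\lambda}$; this is done by integrating the equation satisfied by $u_n$ against suitable test functions and using the expansion \eqref{exp Gr} of $G_p^\lambda$ and Lemma \ref{Grad Gr}. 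Everything else is a routine — if lengthy — adaptation of the arguments of \cite{CC}, \cite{Flu}, \cite{LuYang} and \cite{Csa} to the conical setting, using the singular Carleson--Chang estimate of Section \ref{sez3} as the replacement for \eqref{CC}.
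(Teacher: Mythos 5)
Your overall strategy (blow up, match the bubble to the Green's function, import a Carleson--Chang estimate) is in the right family of ideas, but the paper structures the argument differently and thereby avoids both of the obstacles you flag. The paper does \emph{not} split $E_n(u_n)$ spatially into bubble, neck, and exterior. It uses Li's level-set truncation: for $A>1$ set $u_n^A := \min\{u_n, m_n/A\}$ and split $E_n(u_n)$ over the sets $\{u_n > m_n/A\}$ and $\{u_n \le m_n/A\}$. Lemma \ref{letronc} gives $\|\nabla u_n^A\|_2^2 \to 1/A < 1$, so $e^{b_n (u_n^A)^2}$ is uniformly integrable by \eqref{MT Troy} and the low-level-set piece tends to $|\Sigma|_{g_h}$; the high-level-set piece is bounded by $A^2/(\gamma_n m_n^2)$, which is tied to the bubble integral through Lemma \ref{contointegrali}(iii). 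The result is Lemma \ref{supspezzato}, and there is no neck region at all --- the Pohozaev-type neck estimate you identify as the ``main obstacle'' simply does not arise on the paper's route.

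You also misassign the role of Corollary \ref{mi serve} and understate what is needed in the bubble. In the paper, Corollary \ref{mi serve} is applied to the \emph{bubble} region, not the neck. ``Change of variables and dominated convergence'' in $\Omega_{Lr_n}(p_n)$ only yield that the bubble integral equals $\limsup_n 1/(\gamma_n m_n^2)$; they give no upper bound on this quantity, and the constant $e^{1 + \ov\beta A_p^\lambda}$ does not fall out of passing to the blow-up limit. The paper obtains the bound in Proposition \ref{keystep} by working at a \emph{fixed} scale $\delta$: it writes $v_n = d_n + w_n$ on $D_\delta$ with $d_n := \sup_{\partial D_\delta} v_n$ and $w_n := (v_n - d_n)^+ \in H^1_0(D_\delta)$, applies H\"older to $e^{b_n w_n^2 + 2b_n d_n w_n}$ with exponents $\tau_n(1+\lambda\|u_n\|_q^2)$ and its complement, then bounds the factor in $e^{\beta_n w_n^2/\tau_n}$ by Corollary \ref{mi serve} and the linear-in-$w_n$ factor by the singular Onofri inequality (Corollary \ref{cor CC sing}). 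The $\delta$-asymptotics of $d_n$ and $\tau_n$ come from $m_n u_n \to G_p^\lambda$ (Lemma \ref{conv Gr}) and Lemma \ref{Grad Gr}, and sending $\delta\to 0$ produces $e^{1+\ov\beta A_p^\lambda}$. No exact asymptotic expansion of $m_n^2$ in terms of $\log r_n$ and $A_p^\lambda$ is proved or needed: the inequality in the statement comes directly from the Onofri/Carleson--Chang upper bound, which is much cheaper than the precise matching you propose. If you insist on your three-region split and the Pohozaev matching, the argument can in principle be made to work, but you would be recreating the heavier Li--Druet-type analysis instead of taking the paper's shortcut through the singular Onofri inequality.
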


\begin{oss}\label{ossmax}
We remark that the quantity
$$
\max_{p\in\Sigma,\, \alpha(p)=\ov\alpha}K(p) e^{\ov{\beta} A^\lambda_p}
$$
is well defined. Indeed if $\ov\alpha<0$ the set of points such that $\alpha(p)=\ov\alpha$ is finite. On the other hand if $\ov\alpha=0$ we have that $K\equiv h$ on $\Sigma\setminus\left\{p_1,\dots,p_m\right\}=\left\{p\in\Sigma\colon \alpha(p)=\ov\alpha\right\}$ and the function $he^{\ov\beta A_p^\lambda}$ is continuous on $\Sigma$ and has zeros at the points $p_1,\dots, p_m$.

\medskip

In particular Lemma \ref{non0} and Proposition \ref{finale} give a proof of an Adimurthi-Druet type inequality, namely
$$\sup_{\mathcal{H}}E_{\Sigma,h}^{\ov{\beta},\lambda,q}<+\infty.$$
\end{oss}

The rest of this section is devoted to the proof of Proposition \ref{finale}. 

\medskip


\begin{lemma}\label{equazione}
There exists $s>1$ such that  $u_n\in \mathcal{H} \cap W^{2,s}(\Sigma)$ $\forall\; n$. Moreover $\|\nabla u_n\|_2=1$ and we have 
\begin{equation}\label{equn}
-\Delta_g u_n = \gamma_n h(x) u_n  e^{b_n u_n^2} +s_n(x)
\end{equation}
where
\begin{equation}\label{b_n}
b_n:= \beta_n(1+\lambda\|u_n\|_q^2)\ra \ov{\beta},
\end{equation}
\begin{equation}\label{gamma_n}
\limsup_{n}\gamma_n<+\infty\quad\; \mbox{and} \quad
\gamma_n \int_{\Sigma}  h\;u_n^2 e^{u_n^2} dv_{g} \ra 1,
\end{equation}
and
\begin{equation}\label{sn2}
s_n:= \lambda_n \|u_n\|_{q}^{2-q} |u_n|^{q-2} u_n - c_n
\end{equation}
with
\begin{equation} 
\lambda_n \ra \lambda,
\end{equation}
and
\begin{equation}\label{c_n}
 c_n := \frac{1}{|\Sigma|}\left(\gamma_n\int_{\Sigma}  u_n e^{b_n u^2_n} dv_{g_h} +\lambda_n\|u_n\|_q^{2-q}\int_{\Sigma} |u_n|^{q-2}u_n dv_g\right).
\end{equation}
In particular we have
\begin{equation}\label{sn1}
 c_n\ra 0,\qquad\|s_n\|_{\frac{q}{q-1}} \ra 0
\end{equation}
as $n\to +\infty$.
\end{lemma}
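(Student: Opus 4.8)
The plan is to derive equation \eqref{equn} as the Euler--Lagrange equation for the constrained maximization problem \eqref{sup sub}, and then to establish the various quantitative properties of the coefficients. First I would argue that $\|\nabla u_n\|_2 = 1$: if $\|\nabla u_n\|_2 < 1$, then for $u_0 = 0$ we could rescale $u_n$ to $u_n/\|\nabla u_n\|_2$ (which stays in $\mathcal{H}$ since the mean-zero condition is preserved under scalar multiplication) and strictly increase $E_n$, contradicting maximality; here one uses that $e^{t}$ is strictly increasing and $u_n \not\equiv 0$ for $n$ large because $E_n(u_n) \geq E_n(0) = |\Sigma|_{g_h}$ and in fact the supremum must exceed $|\Sigma|_{g_h}$ strictly. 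Then, writing the Lagrange multiplier conditions for the functional $u \mapsto E_n(u)$ subject to $\int_\Sigma |\nabla u|^2\,dv_g = 1$ and $\int_\Sigma u\,dv_g = 0$, and noting that $\partial_u\left(u^2(1+\lambda\|u\|_q^2)\right) = 2u(1+\lambda\|u\|_q^2) + \lambda u^2 \cdot \frac{2}{q}\|u\|_q^{2-q}\cdot q|u|^{q-2}u / \|u\|_q^{\cdots}$ — more precisely $\partial_u\|u\|_q^2 = 2\|u\|_q^{2-q}|u|^{q-2}u$ — one obtains \eqref{equn} with $\gamma_n$ proportional to $1/(2\mu_n)$ for the gradient-constraint multiplier $\mu_n$, $b_n$ as in \eqref{b_n}, $\lambda_n = \lambda\beta_n\gamma_n\cdot(\text{something})\to\lambda$ absorbing the contribution of the $\|u\|_q^2$ term, and $c_n$ the multiplier enforcing $\int u = 0$, obtained by integrating \eqref{equn} over $\Sigma$, which gives exactly \eqref{c_n}.

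Next I would establish the integrability and regularity claim $u_n \in W^{2,s}(\Sigma)$ for some $s > 1$. The right-hand side of \eqref{equn} lies in $L^s$ for some $s>1$: the term $h u_n e^{b_n u_n^2}$ is in $L^s(\Sigma)$ by Lemma \ref{integr} (since $e^{u_n^2}\in L^t$ for all $t$, $h\in L^r$ for some $r>1$, and $u_n \in L^t$ for all $t$, combine via Hölder), the term $\lambda_n\|u_n\|_q^{2-q}|u_n|^{q-2}u_n$ is in $L^{q/(q-1)}(\Sigma)$ with norm $\lambda_n\|u_n\|_q$ (a direct computation), and $c_n$ is a constant; taking $s = \min\{s_0, q/(q-1)\}$ and applying standard elliptic estimates $\|u_n\|_{W^{2,s}} \leq C(\|\text{RHS}\|_s + \|u_n\|_s)$ gives the claim.

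For the quantitative statements I would proceed as follows. Testing \eqref{equn} against $u_n$ and integrating by parts gives $1 = \|\nabla u_n\|_2^2 = \gamma_n \int_\Sigma h u_n^2 e^{b_n u_n^2}\,dv_g + \int_\Sigma s_n u_n\,dv_g$, and since $\int_\Sigma s_n u_n\,dv_g = \lambda_n\|u_n\|_q^2 + o(1)$ while $b_n \to \ov\beta$, a comparison of $e^{b_n u_n^2}$ with $e^{u_n^2}$ (they differ by a factor bounded away from $0$ and $\infty$ on the bulk, with a careful argument on the concentration region) yields $\gamma_n\int_\Sigma h u_n^2 e^{u_n^2}\,dv_g \to 1$ — actually the cleanest route, which I expect the authors take, is to first prove $\limsup\gamma_n < \infty$ and $\liminf\gamma_n > 0$, then deduce the normalization. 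The bound $\limsup_n \gamma_n < +\infty$ is the main obstacle: one needs to rule out $\gamma_n \to \infty$, which would follow if $\int_\Sigma h u_n^2 e^{b_n u_n^2}\,dv_g$ stayed bounded below — this uses that $\int_\Sigma h e^{b_n u_n^2}\,dv_g = E_n(u_n)$ is bounded (being a maximizing sequence with $u_0 = 0$, so bounded by the subcritical suprema via Lemma \ref{conv sup}, or simply because it converges) together with the elementary inequality $t e^t \geq e^t - 1$ applied to $t = b_n u_n^2$, giving $\int h u_n^2 e^{b_n u_n^2} \geq \frac{1}{b_n}\int h(e^{b_n u_n^2} - 1)\,dv_g = \frac{1}{b_n}(E_n(u_n) - |\Sigma|_{g_h})$, which is bounded below by a positive constant since $\sup_\mathcal{H} E_n > |\Sigma|_{g_h}$. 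Then $\gamma_n = \left(\int h u_n^2 e^{b_n u_n^2}\,dv_g\right)^{-1}(1 - \int s_n u_n\,dv_g) \leq C$. Finally, $c_n \to 0$ follows from \eqref{c_n}: $\gamma_n \int_\Sigma u_n e^{b_n u_n^2}\,dv_{g_h} \to 0$ since $u_n \rightharpoonup 0$, $\gamma_n$ bounded, and $e^{b_n u_n^2}$ bounded in $L^p(\Sigma, g_h)$ for some $p > 1$ on the complement of the concentration point while $\|u_n\|_{L^{p'}} \to 0$ — and similarly $\|u_n\|_q^{2-q}\int|u_n|^{q-2}u_n\,dv_g \to 0$; then $\|s_n\|_{q/(q-1)} \leq \lambda_n\|u_n\|_q + |c_n||\Sigma|^{(q-1)/q} \to 0$ because $\|u_n\|_q \to \|u_0\|_q = 0$.
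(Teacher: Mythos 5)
Your overall strategy mirrors the paper's: derive \eqref{equn} as a constrained Euler--Lagrange equation, show regularity from Lemma \ref{integr} and elliptic estimates, and obtain the normalization $\gamma_n\int_\Sigma h u_n^2 e^{b_n u_n^2}\,dv_g\to 1$ by testing the equation against $u_n$. (Incidentally, the statement's $e^{u_n^2}$ in \eqref{gamma_n} appears to be a typographical slip for $e^{b_n u_n^2}$, and your proposed ``comparison'' of the two exponentials would not actually work near the concentration point since $e^{(b_n-1)u_n^2}$ is unbounded there; the clean reading is just to replace the exponent.)

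Your route to $\limsup_n\gamma_n<+\infty$ is genuinely different from, and arguably cleaner than, the paper's. You use the pointwise inequality $t e^t\ge e^t-1$ on $t=b_n u_n^2\ge 0$ to get $b_n\mu_n\ge E_n(u_n)-|\Sigma|_{g_h}$ in one line, whereas the paper splits $\Sigma$ into $\{|u_n|>t\}$ and $\{|u_n|\le t\}$, bounds $E_n(u_n)\le t^{-2}\mu_n+|\Sigma|_{g_h}+o(1)$, and lets $t\to\infty$. Both give $\liminf\mu_n>0$; the paper's version yields the stronger $\mu_n\to\infty$ (hence $\gamma_n\to 0$), but only $\limsup\gamma_n<\infty$ is claimed in the lemma, so your argument suffices and is more economical.

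There is, however, a genuine gap in your proof that $c_n\to 0$, specifically for the term $\gamma_n\int_\Sigma u_n e^{b_n u_n^2}\,dv_{g_h}$. You argue via $e^{b_n u_n^2}\in L^p(\Sigma,g_h)$ for some $p>1$ ``on the complement of the concentration point'', combined with $\|u_n\|_{p'}\to 0$. But this only controls the contribution away from the blow-up point; near it, $e^{b_n u_n^2}$ is \emph{not} uniformly in any $L^p$ with $p>1$ (since $b_n\to\ov\beta$), and the factor $u_n$ makes the integrand \emph{larger} there, not smaller, so the Hölder argument simply does not apply where it matters. The paper closes this with the same $t$-splitting device used for $\mu_n$: on $\{|u_n|>t\}$, bound $|u_n|\le u_n^2/t$ so that
$$\gamma_n\int_{\{|u_n|>t\}} h\,|u_n|\,e^{b_n u_n^2}\,dv_g\le \frac{\gamma_n\mu_n}{t}=\frac{1+o(1)}{t},$$
while on $\{|u_n|\le t\}$ the exponential is bounded by $e^{b_n t^2}$ and $\|u_n\|_1\to 0$; letting $t\to\infty$ finishes the job. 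This step should be replaced by (something equivalent to) that splitting; as written, it does not go through.
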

\begin{proof}
The maximality of $u_n$ clearly implies $\|\nabla u_n\|_2=1$. 
Using Langrange multipliers theorem, it is simple to verify that $u_n$ satisfies
\begin{equation}\label{primaeq}
-\Delta_g u_n =  \nu_n b_n h(x) u_n e^{b_n u_n^2} + \lambda \nu_n \beta_n \mu_n \|u_n\|_q^{2-q}|u_n|^{q-2}u_n  -c_n.
\end{equation}
where $b_n$ is defined as in \eqref{b_n},  $\mu_n:= \int_{\Sigma}h\; u_n^2 e^{b_n u_n^2} dv_{g}$,
\begin{equation}\label{c_n2}
 c_n := \frac{1}{|\Sigma|}\left(\gamma_n\int_{\Sigma}  h u_n e^{b_n u^2_n} dv_{g} +\lambda \nu_n \beta_n \mu_n \|u_n\|_q^{2-q}\int_{\Sigma} |u_n|^{q-2}u_n dv_g\right),
\end{equation} and $\nu_n \in  \R$. 
We define $\gamma_n:=\nu_n b_n$, $\lambda_n:=\lambda \nu_n \beta_n \mu_n$ and $s_n(x) :=\lambda_n \|u_n\|_q^{2-q} |u_n|^{q-2} u_n-c_n$ \\so that \eqref{equn} and \eqref{sn2} are satisfied.
Observe also that
\begin{equation}\label{la1}
\left\|\|u_n\|_q^{2-q} |u_n|^{q-2} u_n\right\|_{\frac{q}{q-1}} = \|u_n\|_q \ra 0. 
\end{equation} and
\begin{equation}\label{la2}
\|u_n\|_q^{2-q}\left|\int_{\Sigma} |u_n|^{q-2}u_n dv_g\right| \le \|u_n\|_{q}|\Sigma|^\frac{1}{q}\ra 0
\end{equation}
If $s_0>1$ is such that $h\in L^{s_0}(\Sigma)$, using Lemma \ref{integr}  and standard Elliptic regularity, we find $u_n \in W^{2,s}(\Sigma)$ $\forall\; 1<s<\min\{s_0,\frac{q}{q-1}\}$.   Multiplying \eqref{primaeq} by $u_n$ and integrating on $\Sigma$  we get
$$
1= \nu_n b_n \mu_n + \lambda \nu_n \beta_n \mu_n \|u_n\|_q^2 = \nu_n b_n \mu_n (1+\frac{\lambda \beta_n\|u_n\|_q^2}{b_n})= \gamma_n \mu_n (1+o(1))
$$
from which we get the second part of \eqref{gamma_n}. As a consequence we also have 
\begin{equation}\label{la3}
\lambda_n= \lambda \nu_n \beta_n \mu_n =\lambda \gamma_n \mu_n \frac{\beta_n}{b_n} \ra \lambda.
\end{equation} 
Now we prove $\dis{\limsup_{n\to \infty}\gamma_n<+\infty}$ or,  equivalently, $\dis{\liminf_{n\to \infty }\mu_n   >0}$.   For any $t>0$, we have 
$$
E_n(u_n) \le \frac{1}{t^2}\int_{\{|u_n|>t\}} h\; u_n^2 e^{b_n u_n^2} dv_{g} +\int_{\{|u_n|\le t\}} h e^{b_n u_n^2} dv_{g} \le \frac{1}{t^2}\int_{\Sigma} h u_n^2 e^{b_n u_n^2} dv_{g} +|\Sigma|_{g_h} +o(1)
$$ from which 
$$
\liminf_{n\to \infty} \mu_n= \liminf_{n\to \infty}\int_{\Sigma} h\; u_n^2 e^{b_n u_n^2} dv_{g}\ge t^2 \left(\sup_{\mathcal{H}}E_{\Sigma,h}^{\ov{\beta},\lambda,q}-|\Sigma|_{g_h}\right)>0.
$$
It remains to prove that $c_n\ra 0$ which, with \eqref{la1} and \eqref{la3}, completes the proof of \eqref{sn1}. For any $t>0$
$$
\gamma_n \int_{\Sigma} h |u_n| e^{b_n u_n^2} dv_{g}\le \frac{\gamma_n}{t}\int_{\{|u_n|>t\}} h u_n^2 e^{b_n u_n^2} dv_{g} + \gamma_n \int_{\{|u_n|\le t\}}h |u_n| e^{b_n u_n^2} dv_{g} = \frac{1+o(1)}{t} +o(1).
$$ 
Since $t$ can be taken arbitrarily large we find 
$$
\gamma_n \int_{\Sigma}h u_n e^{b_n u_n^2} dv_{g}\ra 0. 
$$
Combined with \eqref{c_n}, \eqref{la2} and \eqref{la3}, this yields $c_n\to0$. 
\end{proof}

By Lemma \ref{equazione} we know that $u_n\in C^0(\Sigma)$, thus we can take a sequence $p_n$ such that 
\begin{equation}\label{m e p}
m_n:=\max_{\Sigma} |u_n| = u_n(p_n),
\end{equation}
where the last equality holds up to changing the sign of $u_n$. 
Clearly if $\sup_{n}m_n<+\infty$, then we would have $E_n(u_n)\ra |\Sigma|_{g_h}$ which contradicts Lemma \ref{conv sup}. Thus, up to subsequences, we will assume 
\begin{equation}\label{m e p2}
m_n\ra+\infty\qquad \mbox{ and } \qquad p_n\ra p. 
\end{equation}

\begin{lemma}\label{punto} 
Let $\Omega\subset\Sigma$ be an open subset such that 
$$ \limsup_{n\to+\infty} \| \nabla u_n \|_{L^2(\Omega)}<1.$$
Then $$ \|u_n\|_{L^\infty_{loc}(\Omega)}\le C.$$
\end{lemma}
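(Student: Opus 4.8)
The plan is to exploit the equation \eqref{equn} together with a local version of the subcritical Moser-Trudinger bound \eqref{MT Troy}. First I would fix a point $x_0\in\Omega$ and a small geodesic ball $B_{2\rho}(x_0)\subset\Omega$ such that $\limsup_n\|\nabla u_n\|_{L^2(B_{2\rho}(x_0))}^2=:a<1$; this is possible by the hypothesis, shrinking $\rho$ if necessary. On such a ball I want to show that the right-hand side of \eqref{equn}, namely $f_n:=\gamma_n h u_n e^{b_n u_n^2}+s_n$, is bounded in $L^{r}(B_{2\rho}(x_0))$ for some $r>1$, uniformly in $n$. Once this is established, elliptic estimates (applied to $u_n$ on $B_{2\rho}(x_0)$, using that $u_n$ is bounded in $L^2(\Sigma)$ since $u_n\rw 0$, hence bounded in $W^{1,2}$) give $u_n$ bounded in $W^{2,r}(B_\rho(x_0))\hookrightarrow L^\infty(B_\rho(x_0))$, and a covering argument yields the local $L^\infty$ bound on all of $\Omega$.

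The heart of the matter is the $L^r$ bound on $\gamma_n h u_n e^{b_n u_n^2}$ near $x_0$. Since $\limsup_n\gamma_n<+\infty$ by \eqref{gamma_n} and $b_n\to\ov\beta$, and since $s_n\to 0$ in $L^{q/(q-1)}$ by \eqref{sn1}, it suffices to bound $h\,u_n\,e^{b_n u_n^2}$ in $L^r$ for some $r\in(1,q/(q-1))$. Here I would write, for a cutoff $\eta\in C_c^\infty(B_{2\rho}(x_0))$ with $\eta\equiv 1$ on $B_\rho(x_0)$, the function $w_n:=\eta(u_n-\ov{u_n}|_{B_{2\rho}})$ or more simply estimate on $B_{2\rho}$ directly: the key is that $e^{rb_n u_n^2}$ is integrable on $B_{2\rho}(x_0)$ with a uniform bound as soon as $rb_n\limsup_n\|\nabla u_n\|_{L^2(B_{2\rho}(x_0))}^2<4\pi(1+\ov\alpha)$. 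Since $\limsup_n\|\nabla u_n\|_{L^2(B_{2\rho})}^2=a<1$ and $b_n\to 4\pi(1+\ov\alpha)$, one can choose $r>1$ close enough to $1$ so that $rb_n a<4\pi(1+\ov\alpha)$ for $n$ large; then a localized form of \eqref{MT Troy} (obtained by pulling back to a disk via the isothermal chart of \eqref{psi1}--\eqref{psi2}, subtracting the mean, and using that $h$ lies in $L^{s_0}$ for some $s_0>1$ to absorb the weight via Hölder, exactly as in the proof of Lemma \ref{integr}) gives $\|h\,e^{rb_n u_n^2}\|_{L^1(B_{2\rho}(x_0))}\le C$. Combined with the trivial pointwise bound $|u_n|e^{b_nu_n^2}\le e^{(b_n+1)u_n^2}$ absorbed by a slightly larger exponent, this produces the desired uniform $L^r$ bound on $f_n$ over $B_\rho(x_0)$.

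The main obstacle I anticipate is handling the mean-subtraction and the singular weight $h$ cleanly in the localization step: the inequality \eqref{MT Troy} is stated globally on $\Sigma$ for the space $\mathcal H$, so to use it locally I must convert $u_n$ restricted to $B_{2\rho}(x_0)$ (whose gradient norm is $<1$ but which is not mean-zero on the small ball and not compactly supported) into an admissible competitor. This is the standard device: pull back by the isothermal chart, multiply by a cutoff, subtract the mean, and control the error terms using $\|u_n\|_{L^2}\to 0$ together with Hölder to handle the $|x|^{2\alpha(p)}V(x)$ weight; the point $x_0$ may coincide with one of the $p_i$, but since $\alpha_i\ge\ov\alpha$ the weight exponent there is $\ge$ what the critical exponent $4\pi(1+\ov\alpha)$ requires, so the Troyanov inequality \eqref{MT Troy} still applies with room to spare. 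Everything else is routine elliptic bootstrap and a finite cover of a relatively compact exhaustion of $\Omega$.
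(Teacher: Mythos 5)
Your proposal is correct and follows the same strategy as the paper: cut off $u_n$ near the point of interest, verify that $\limsup_n\|\nabla(\xi u_n)\|_{L^2(\Sigma)}^2<1$ using the hypothesis plus $\|u_n\|_{L^2}\to 0$, invoke \eqref{MT Troy} to get a uniform $L^{s}$ bound (for some $s>1$) on the exponential term in \eqref{equn}, combine with \eqref{sn1} to bound $-\Delta_g u_n$ locally in $L^s$, and conclude by elliptic estimates and $W^{2,s}\hookrightarrow L^\infty$. The paper works with a second nested cut-off $\tilde\xi$ and a fixed $\tilde\Omega\Subset\Omega$ rather than balls and a covering, but that is a presentational difference only.
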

\begin{proof}
Fix $\tilde\Omega\Subset\Omega$. Take a cut-off function $\xi \in C_0^\infty(\Omega)$ such that $0\le \xi \le 1$ and $\xi \equiv 1$ in $\Omega'$ where $\tilde \Omega\Subset\Omega'\Subset\Omega$.
Since 
$$
\int_{\Sigma} |\nabla u_n \xi|^2 dv_g  = \int_{\Omega}|\nabla u_n|^2 \xi^2 dv_g + 2\int_{\Omega} u_n \xi \nabla u_n \cdot \nabla \xi \;dv_g +\int_{\Omega}|\nabla \xi|^2 u_n^2 dv_g \le 
$$
$$
\le (1+\eps)\int_{\Omega}|\nabla u_n|^2 \xi^2 dv_g + C_\eps \int_{\Omega} |\nabla \xi |^2 u_n^2 dv_g 
$$ 
and $\eps$ can be taken arbitrarily small, we find 
$$
\limsup_{n\to \infty}\|\nabla (u_n \xi)\|^2_{L^2(\Sigma)}< 1. 
$$
Thus, applying \eqref{MT Troy} to $v_n:= \frac{\xi u_n}{\|\nabla(\xi u_n)\|_{L^2(\Sigma)} }$ we find
\begin{equation}\label{soloqui}
\left\|e^{\ov{\beta} u_n^2 (1+\lambda \|u_n\|_q^2)}\right\|_{L^{s_0}(\Omega',g_h)} \le C
\end{equation}
for some $s_0>1$. From \eqref{sn1} and \eqref{soloqui},  $-\Delta_g u_n $ is uniformly bounded in $L^{s}(\Omega')$ for any $s<\min\{s_0,\frac{q}{q-1}\}$. If we take another cut-off function $\ti{\xi}\in C_0^\infty(\Omega') $ such that $\ti{\xi} \equiv 1$ in $\tilde\Omega$, applying elliptic estimates to $\ti{\xi} u_n$ in $\Omega'$ we find $\sup_{\Omega'} \tilde\xi u_n\le C$ and hence $\sup_{\tilde\Omega} u_n\le C$. 
\end{proof}

From Lemma \ref{punto} one can deduce that $|\nabla u_n|^2\rw\delta_p$, that is $u_n$ concentrates at $p$. Intuitively it is natural to expect that concentration for maximizing sequences happens in the regions in which $h$ is larger. 
We will show that $p$ must be a minimum point of the singularity index $\alpha$ defined in \eqref{ind}. This will clarify the difference between the cases $\ov{\alpha}<0$ and $\ov{\alpha}=0$: in the former, the blow-up point $p$ will be one of the singular points $p_1,\ldots,p_m$, while in the latter $p\in \Sigma\bs\{p_1,\ldots,p_m\}$ (cfr. Remark \ref{alfapalfabar} and Proposition \ref{keystep}).
The next step consists in studying the behaviour of $u_n$ around $p$. Arguing as in \cite{Li1} we will prove that a suitable scaling of $u_n$ converges to a solution of a (possibly singular) Liouville-type equation on $\R^2$ (see Proposition \ref{rescaling}).

\medskip

Again we consider a local chart $(\Omega, \psi)$ satisfying \eqref{psi1}-\eqref{psi2}. From now on we will denote $x_n:=\psi(p_n)$ and
\begin{equation}\label{defvenne}
v_n= u_n \circ \psi^{-1}.
\end{equation}

Define $t_n$ and $\tilde t_n$ so that

\begin{equation}\label{tn}
t_n^{2(1+\alpha(p))}\gamma_n m_n^2 e^{b_n m_n^2}=1,
\end{equation}
\begin{equation}\label{ttilden}
\tilde{t}_n^{2}|x_n|^{2\alpha(p)}\gamma_n m_n^2 e^{b_n m_n^2}=1.
\end{equation}

\begin{lemma}\label{tnttilden0}
For any $\beta<\ov\beta$ we have
\[
t_n^{2(1+\alpha(p))}m_n^2 e^{\beta m_n^2}\to 0,\qquad \tilde t^2_n |x_n|^{2\alpha(p)} m_n^2 e^{\beta m_n^2} \to 0
\]
as $n\to +\infty$.
In particular, for any $s\geq 0$ we have
\[
\lim_{n\to+\infty} t_n m_n^s=0,\qquad\lim_{n\to+\infty} \tilde t_n m_n^s=0.
\]
Moreover as $n\to+\infty$ we have
\begin{equation}\label{xntn iff}
\frac{|x_n|}{t_n}\to +\infty \Longleftrightarrow \frac{|x_n|}{\tilde t_n}\to +\infty.
\end{equation}
\end{lemma}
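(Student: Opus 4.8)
The plan is to read off the behaviour of $t_n$, $\tilde t_n$ directly from the defining relations \eqref{tn}, \eqref{ttilden} after taking logarithms, using only the facts $m_n\to+\infty$, $b_n\to\ov\beta=4\pi(1+\ov\alpha)$, and $\limsup_n\gamma_n<+\infty$ from Lemma \ref{equazione}. First I would take logarithms in \eqref{tn}: $2(1+\alpha(p))\log t_n = -\log\gamma_n -2\log m_n - b_n m_n^2$, so that
\[
\log\!\big(t_n^{2(1+\alpha(p))} m_n^2 e^{\beta m_n^2}\big) = -\log\gamma_n + (\beta-b_n)m_n^2 = -\log\gamma_n - (b_n-\beta)m_n^2 .
\]
Since $\beta<\ov\beta$ and $b_n\to\ov\beta$, for $n$ large we have $b_n-\beta\ge \tfrac12(\ov\beta-\beta)>0$, while $\log\gamma_n$ is bounded below because $\gamma_n>0$ with $\limsup_n\gamma_n<+\infty$ (and, by the second part of \eqref{gamma_n} together with $\mu_n$ bounded, $\gamma_n$ is also bounded away from $0$; in any case $\log\gamma_n$ stays bounded). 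Hence the right-hand side tends to $-\infty$, giving $t_n^{2(1+\alpha(p))} m_n^2 e^{\beta m_n^2}\to 0$. The identical computation with \eqref{ttilden}, replacing $t_n^{2(1+\alpha(p))}$ by $\tilde t_n^2|x_n|^{2\alpha(p)}$, gives $\tilde t_n^2|x_n|^{2\alpha(p)} m_n^2 e^{\beta m_n^2}\to 0$.

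Next, for the statements $t_n m_n^s\to 0$ and $\tilde t_n m_n^s\to 0$ for every $s\ge 0$: pick any $\beta\in(0,\ov\beta)$. From the first limit, $t_n^{2(1+\alpha(p))}\le e^{-\beta m_n^2}/m_n^2$ for $n$ large (dropping the bounded factor harmlessly, or more precisely $t_n^{2(1+\alpha(p))} m_n^2 e^{\beta m_n^2}\le 1$ eventually), so
\[
t_n \le \big(m_n^{-2} e^{-\beta m_n^2}\big)^{\frac{1}{2(1+\alpha(p))}},
\]
and since $1+\alpha(p)>0$, multiplying by $m_n^s$ and using $m_n\to+\infty$ shows $t_n m_n^s\to 0$ because the exponential $e^{-\beta m_n^2/(2(1+\alpha(p)))}$ beats any power of $m_n$. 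For $\tilde t_n$ I would argue the same way, but here one must be slightly careful when $\alpha(p)>0$, since then $|x_n|^{2\alpha(p)}$ sits in the denominator of $\tilde t_n^2$ and $|x_n|$ could be small; however $|x_n|\le \diam(D_{\delta_0})$ is bounded above, so when $\alpha(p)\ge0$ we have $|x_n|^{-2\alpha(p)}\ge c>0$ and the bound goes through, and when $\alpha(p)<0$ (so $\alpha(p)=\ov\alpha<0$ and $p=p_i$ for some $i$) we have $|x_n|^{-2\alpha(p)}=|x_n|^{2|\alpha(p)|}\to 0$, which only helps. Either way $\tilde t_n m_n^s\to 0$.

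Finally, for the equivalence \eqref{xntn iff}: from \eqref{tn} and \eqref{ttilden} we get the exact relation $t_n^{2(1+\alpha(p))} = \tilde t_n^2 |x_n|^{2\alpha(p)}$, i.e.
\[
\Big(\frac{|x_n|}{t_n}\Big)^{2(1+\alpha(p))} = \frac{|x_n|^{2(1+\alpha(p))}}{t_n^{2(1+\alpha(p))}} = \frac{|x_n|^{2(1+\alpha(p))}}{\tilde t_n^2 |x_n|^{2\alpha(p)}} = \frac{|x_n|^{2}}{\tilde t_n^2} = \Big(\frac{|x_n|}{\tilde t_n}\Big)^{2}.
\]
Since $1+\alpha(p)>0$, the left side $\to+\infty$ iff $|x_n|/t_n\to+\infty$, and the right side $\to+\infty$ iff $|x_n|/\tilde t_n\to+\infty$; the two being equal forces the equivalence.

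I do not expect any real obstacle here; the only point needing mild care is keeping track of the sign of $\alpha(p)$ (equivalently, whether the blow-up point is a conical point) when estimating $\tilde t_n$, since $|x_n|^{2\alpha(p)}$ switches between numerator and denominator — but in both cases the bound on $|x_n|$ from above makes the argument work, so this is a bookkeeping issue rather than a genuine difficulty.
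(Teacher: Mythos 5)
Your argument for the first limit has a genuine gap. You write
\[
\log\bigl(t_n^{2(1+\alpha(p))} m_n^2 e^{\beta m_n^2}\bigr) = -\log\gamma_n + (\beta-b_n)m_n^2,
\]
which is correct, and then you conclude this tends to $-\infty$ because $(\beta-b_n)m_n^2\to-\infty$ \emph{and} ``$\log\gamma_n$ stays bounded.'' That last claim is false, and your parenthetical justification (``$\mu_n$ bounded, hence $\gamma_n$ bounded away from $0$'') begs the question: nowhere is $\mu_n$ shown to be bounded, and in fact the proof of Lemma~\ref{equazione} establishes $\liminf_n\mu_n\ge t^2\bigl(\sup_{\mathcal{H}}E^{\ov\beta,\lambda,q}_{\Sigma,h}-|\Sigma|_{g_h}\bigr)$ for \emph{every} $t>0$, which forces $\mu_n\to+\infty$ and hence $\gamma_n\to 0$. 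So $-\log\gamma_n\to+\infty$, and your computation reduces the claim to a comparison of two divergent rates, $-\log\gamma_n$ versus $(b_n-\beta)m_n^2$, for which you have no control at this stage. Making your route work would require something like $\log\mu_n = o(m_n^2)$, i.e.\ $E_n(u_n)$ uniformly bounded, which is essentially the conclusion of the whole section (Proposition~\ref{finale}) and is not available here.

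The paper circumvents exactly this issue: instead of estimating $\gamma_n$ alone, it substitutes $\frac{1}{\gamma_n}=\mu_n(1+o(1))=\int_\Sigma h\,u_n^2 e^{b_n u_n^2}\,dv_g\,(1+o(1))$ back into the identity and then absorbs the prefactor $e^{(\beta-b_n)m_n^2}$ into the integrand pointwise: since $|u_n|\le m_n$ and $\beta-b_n\le 0$ eventually, $e^{(\beta-b_n)m_n^2}e^{b_n u_n^2}\le e^{\beta u_n^2}$. This turns the expression into $\int_\Sigma h\,u_n^2 e^{\beta u_n^2}\,dv_g\,(1+o(1))$, which tends to $0$ by H\"older's inequality combined with $\|u_n\|_{L^r}\to 0$ for all $r$ (from $u_n\rw 0$ and compact embedding) and finiteness of $\|e^{\ov\beta u_n^2}\|_{1,h}$ via the Troyanov bound~\eqref{MT Troy}. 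No lower bound on $\gamma_n$ is ever invoked. The remaining two parts of your proof---deducing $t_n m_n^s\to 0$, $\tilde t_n m_n^s\to 0$ from the first limit, and the equivalence~\eqref{xntn iff} from the identity $\bigl(|x_n|/t_n\bigr)^{1+\alpha(p)}=|x_n|/\tilde t_n$---are correct and agree with the paper.
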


\begin{proof}
Since the result can be proven both for $t_n$ and $\tilde t_n$ with the same argument, we will prove it here only for $t_n$. 
By \eqref{b_n}, \eqref{gamma_n} and \eqref{tn}
\[
\begin{split}
t_n^{2(1+\alpha(p))}m_n^2 e^{\beta m_n^2} &= \frac{e^{(\beta-b_n)m_n^2}}{\gamma_n}
= e^{(\beta-b_n)m_n^2} \int_{\Sigma} h u_n^2 e^{b_n u_n^2}dv_g (1+o(1)) \\&
\le  \int_{\Sigma} h u_n^2 e^{\beta u_n^2}  dv_g (1+o(1)).
\end{split}
\]
Take $s= \frac{\ov{\beta}}{\beta}'$ (i.e.  $1/s +\beta/\ov{\beta}=1$)  and $s_0>1$ such that $h \in L^{s_0}(\Sigma)$. Then 
$$
 \int_{\Sigma} h u_n^2 e^{\beta u_n^2}  dv_g \le \| u_n^2\|_{s,h} \| e^{\ov{\beta} u_n^2 }\|_{1,h}^{\frac{\, \beta \,}{ \ov{\beta}}} \le C \|h\|_{s_0}^\frac{1}{s} \|u_n^2\|_{s s_0'} \ra 0. 
$$
As for the last claim it is enough to observe that from \eqref{tn} and \eqref{ttilden} one computes

$$
\frac{|x_n|}{\tilde t_n}=\left(\frac{|x_n|}{t_n}\right)^{1+\alpha(p)}.
$$
\end{proof}

%
%
%



We define now 

\begin{align}\label{rn}
&r_n:=\begin{cases}
\tilde t_n & \text{ if $\frac{|x_n|}{t_n}\to +\infty$ as $n\to +\infty$,}\\
t_n & \text{ otherwise}
\end{cases}
\end{align}
and
\begin{align}
\label{etaenne}
\eta_n(x):=m_n\left(v_n(x_n+r_n x)-m_n\right)
\end{align}
where the function $\eta_n$ is defined in $D_{\frac{\delta_0}{r_n}}$.

\begin{prop}\label{rescaling}
Up to subsequences, $\eta_n\to \eta_0$ in $C^{0}_{loc}(\R^2)\cap H^1_{loc}(\R^2)$. Moreover
\begin{itemize}
\item[$(i)$]if $\frac{|x_n|}{r_n}\to +\infty$ as $n\to +\infty$ the function $\eta_0$ solves
\begin{align}\label{eqreg}
&-\Delta \eta_0=V(0) e^{2\ov\beta \eta_0}\\&
\label{intreg}
\int_{\R^2}V(0)e^{2\ov\beta\eta_0}\, dx=1;
\end{align}
\item[$(ii)$] if $\frac{|x_n|}{r_n}\to \ov x$ the function $\eta_0$ solves
\begin{align}\label{eqsing}
&-\Delta \eta_0=|x+\ov x|^{2\alpha(p)}V(0) e^{2\ov\beta \eta_0}\\&
\label{intsing}
\int_{\R^2}|x+\ov x|^{2\alpha(p)}V(0)e^{2\ov\beta\eta_0}\, dx=1.
\end{align}
\end{itemize}
\end{prop}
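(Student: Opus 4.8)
The plan is to run the standard blow-up rescaling argument of Li for Moser-Trudinger equations, adapted to the possibly singular weight. I would work throughout in the local chart $(\Omega,\psi)$, so that the equation \eqref{equn} for $u_n$ becomes, in the coordinates $x\in D_{\delta_0}$, an equation for $v_n=u_n\circ\psi^{-1}$ of the form
\[
-\Delta v_n = \gamma_n\,|x|^{2\alpha(p)}V(x)\,e^{\ph(x)}\,v_n\,e^{b_n v_n^2} + e^{\ph(x)}\,(s_n\circ\psi^{-1}),
\]
using \eqref{pullb}, \eqref{psi2} and the fact that $h(\psi^{-1}(x))=|x|^{2\alpha(p)}V(x)$ near $0$. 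Then I would write down the equation satisfied by $\eta_n$ defined in \eqref{etaenne}: a direct computation using the scaling $x\mapsto x_n+r_n x$ gives
\[
-\Delta\eta_n(x) = \gamma_n\,r_n^2\,m_n^2\,|x_n+r_n x|^{2\alpha(p)}\,V(x_n+r_n x)\,e^{\ph(x_n+r_n x)}\,\frac{v_n(x_n+r_n x)}{m_n}\,e^{b_n v_n(x_n+r_n x)^2} + r_n^2 m_n\,e^{\ph}\,(s_n\circ\psi^{-1}).
\]
The key algebraic point is that, by the definitions \eqref{tn}, \eqref{ttilden} of $t_n,\tilde t_n$ and the choice \eqref{rn} of $r_n$, the prefactor $\gamma_n r_n^2 m_n^2 e^{b_n m_n^2}$ equals either $1$ (case $r_n=t_n$, after absorbing $t_n^{2\alpha(p)}|x_n+r_n x|^{-2\alpha(p)}\to$ const or $|x+\bar x|^{2\alpha(p)}$) or $|x_n|^{-2\alpha(p)}$ (case $r_n=\tilde t_n$), so that the nonlinear coefficient stays bounded. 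One also uses $b_n v_n(x_n+r_n x)^2 = b_n m_n^2 + b_n(v_n-m_n)(v_n+m_n)$ and $v_n+m_n\approx 2m_n$, $m_n(v_n-m_n)=\eta_n$, to see that $e^{b_n v_n^2} = e^{b_n m_n^2}\,e^{2b_n\eta_n(1+o(1))}\cdot e^{b_n\eta_n^2/m_n^2}$, and that the last factor tends to $1$ locally uniformly once we have local bounds on $\eta_n$.

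The heart of the argument is therefore to establish the compactness: $\eta_n\le 0$ with $\eta_n(0)=0$, and I would show $\eta_n$ is locally uniformly bounded. The lower bound is the delicate step. First, from \eqref{sn1} and the fact that the nonlinear term is nonnegative where $u_n>0$, one gets that $-\Delta\eta_n \ge -\|r_n^2 m_n e^\ph s_n\circ\psi^{-1}\|_\infty$ is bounded below (indeed $r_n^2 m_n\to 0$ fast by Lemma \ref{tnttilden0}, so this goes to $0$), so $\eta_n$ is, up to a vanishing term, superharmonic. Combined with $\eta_n(0)=0$ and $\eta_n\le 0$, a Harnack-type/mean value argument on small balls gives $\eta_n\ge -C$ on compact sets (this is the classical step; one uses that a nonpositive function that is almost superharmonic and vanishes at a point cannot be too negative nearby, via the representation formula and the $L^1$ bound $\int |x+\bar x|^{2\alpha(p)}V e^{2\bar\beta\eta_n}\le 1+o(1)$ coming from \eqref{gamma_n}). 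Once $\eta_n$ is locally bounded, elliptic estimates applied to the equation above — whose right-hand side is then bounded in $L^s_{loc}$ for $s$ close to but below $1/|\alpha(p)|$ when $\alpha(p)<0$, using that $|x+\bar x|^{2\alpha(p)}\in L^s_{loc}$ for such $s$, and trivially bounded when $\alpha(p)\ge 0$ — give $\eta_n$ bounded in $W^{2,s}_{loc}$, hence precompact in $C^0_{loc}$ and in $H^1_{loc}$. Passing to the limit identifies the equation: in case $(i)$, $|x_n+r_nx|/r_n = |x_n|/r_n + O(1)\to\infty$ so $|x_n+r_n x|^{2\alpha(p)}t_n^{2\alpha(p)}$-type factors converge to the constant $V(0)$ (using $V,\ph$ continuous, $\ph(0)=0$, $V(0)=K(p)$ but here in the chart $V(0)$ as in \eqref{psi2}), giving \eqref{eqreg}; in case $(ii)$, $x_n/r_n\to\bar x$ and the weight converges to $|x+\bar x|^{2\alpha(p)}V(0)$, giving \eqref{eqsing}. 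The sign of $\eta_0$ ($\le 0$, $=0$ at a point) forces $\eta_0$ to be the relevant Liouville solution.

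Finally, the mass identities \eqref{intreg}, \eqref{intsing}. The upper bound $\int_{\R^2}(\cdots)e^{2\bar\beta\eta_0}\le 1$ follows by Fatou from the change of variables $\int_{D_{Lr_n}(x_n)}\gamma_n h v_n e^{b_n v_n^2} e^\ph\,dx \le \int_\Sigma \gamma_n h u_n e^{b_n u_n^2}\le 1+o(1)$ by \eqref{gamma_n} — more precisely using $v_n/m_n\to 1$ and $\eta_n\to\eta_0$ on $D_L$ for each fixed $L$, then $L\to\infty$. The reverse inequality is the standard ``no loss of mass at the concentration scale'': one shows that for every $\eps>0$ there is $L$ with $\int_{D_L(0)}(\cdots)e^{2\bar\beta\eta_0}\ge 1-\eps$, equivalently that the $\limsup$ of the energy $\gamma_n\int_{D_{Lr_n}(x_n)} h u_n^2 e^{b_n u_n^2}$ captures all of $\mu_n\to 1$; this uses Corollary \ref{mi serve} (the local Carleson-Chang estimate) to control the contribution of the ``neck'' region $D_\delta\setminus D_{Lr_n}(x_n)$ where $u_n\rw 0$, showing it is $O(\delta^{2(1+\alpha)})$ and hence the part of $\int h u_n^2 e^{b_n u_n^2}$ carried there is negligible after also using that away from the concentration point $u_n$ stays bounded by Lemma \ref{punto}. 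I expect this last ``mass $=1$'' step — ruling out energy leaking between the $r_n$-scale and macroscopic scale — to be the main obstacle, since it requires carefully combining the pointwise bound from Lemma \ref{punto}, the decay estimate Lemma \ref{decay} (or its chart analogue), and the local estimate of Corollary \ref{mi serve}; the rescaling and elliptic-compactness parts are routine given the earlier lemmas.
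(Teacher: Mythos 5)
Your setup — passing to the chart, writing the equation for $\eta_n$, using the sign and size bounds $\eta_n\le 0$, $|\eta_n|\le 2m_n^2$ together with $\eta_n(0)=0$ to get local $L^s$ bounds on $-\Delta\eta_n$ and hence $C^{0,\alpha}_{loc}$ compactness — is essentially the paper's argument, modulo some imprecision (the paper controls the error term $m_n r_n^2 s_n(x_n+r_n\cdot)$ in $L^{q/(q-1)}$ via \eqref{sd}, not in $L^\infty$ as you assert). The limit-equation identification in cases (i) and (ii) is also correct.

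The serious gap is in your treatment of the mass identities \eqref{intreg}, \eqref{intsing}. You correctly obtain $\int(\cdots)e^{2\ov\beta\eta_0}\le 1$ by Fatou and the energy bound. But for the reverse inequality you propose a direct ``no loss of mass in the neck'' argument using Corollary \ref{mi serve}, Lemma \ref{decay}, and Lemma \ref{punto}. That route does not work here: Corollary \ref{mi serve} estimates $\int h\,e^{4\pi(1+\alpha)u_n^2}$, not the energy density $\int h\,u_n^2 e^{b_n u_n^2}$, and gives only an upper bound on the concentrating scale, not control on the neck; nor is it a priori clear at this stage that the energy $\mu_n$ is carried entirely on scale $r_n$. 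The paper instead invokes the \emph{classification of finite-mass solutions of the (singular) Liouville equation}: by Chen--Li \cite{clChenLi} in case (i) the total mass is $\frac{1}{1+\ov\alpha}$, and by Prajapat--Tarantello \cite{praj-tar} in case (ii) it is $\frac{1+\alpha(p)}{1+\ov\alpha}$. Since $\ov\alpha\le\alpha(p)\le 0$, both quantities are $\ge 1$, so combining with the upper bound forces the mass to equal $1$ --- and, as a by-product, forces $\ov\alpha=0$ in case (i) and $\alpha(p)=\ov\alpha$ in case (ii) (Remark \ref{alfapalfabar}). This classification step is the key ingredient you are missing; without it the lower bound on the mass is not established.
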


\begin{proof}
If $\frac{|x_n|}{t_n}\to +\infty$ as $n\to +\infty$ then $r_n=\tilde t_n$ and 
 it follows that $\eta_n$ as in \eqref{etaenne} satisfies
$$
-\Delta \eta_n = m_n r_n^2 e^{\ph(x_n+r_n x)} \left( \gamma_n|x_n+r_n x |^{2\alpha(p)}  V(x_n+r_n x)e^{b_n v_n^2} v_n(x_n+r_n x) + s_n(x_n+r_n x)\right)=
$$
$$
=e^{\ph(x_n+r_n x)}\left( \left|\frac{x_n}{|x_n|}+\frac{r_n}{|x_n|} x\right|^{2\alpha(p)} V(x_n+r_nx) \left(1+\frac{\eta_n}{m^2_n}\right) e^{b_n  \left( 2  \eta_n + \frac{\eta_n^2}{m_n^2}\right)} +m_nr_n^2  s_n(x_n +r_n x)\right).
$$

Otherwise we have that $r_n=t_n$ and, up to subsequences, $\frac{|x_n|}{t_n}\to \ov x$ as $n\to+\infty$ and $\eta_n$ satisfies
$$-\Delta  \eta_n =m_n r_n^2e^{\ph(x_n+r_n x)}  \left(  \gamma_n \left|x_n + r_n x\right|^{2\alpha(p)} V(x_n+r_n x)  e^{b_n u_n^2} v_n(x_n+r_n x) +  s_n(x_n +r_n x) \right)=
$$
$$
=  e^{\ph(x_n+r_n y)} \left(\left|\frac{x_n}{r_n} + x\right|^{2\alpha(p)} V(x_n+r_n x)\left(1+\frac{\eta_n}{m^2_n}\right) e^{b_n  \left( 2  \eta_n + \frac{\eta_n^2}{m_n^2}\right)} +m_nr_n^2  s_n(x_n +r_n x)\right).
$$


Observe that from Lemma \ref{tnttilden0} and \eqref{sn1} we have
\begin{equation}\label{sd}
\begin{split}
\int_{D_L} |m_n r_n^2 s_n(x_n +r_n x)|^\frac{q}{q-1}\, dx &= m_n^\frac{q}{q-1} r_n^{\frac{2}{q-1}}\int_{D_{L r_n}(x_n)} \hspace{-0.3cm} |s_n(x)|^\frac{q}{q-1} \, dx \\
&\le  m_n^\frac{q}{q-1} r_n^{\frac{2}{q-1}} \|s_n\|^{\frac{q}{q-1}}_{\frac{q}{q-1}} \to 0
\end{split}
\end{equation}
for any $L>0$. 
Since $ 2  \eta_n + \frac{\eta_n^2}{m_n^2}\leq 0$ and $|\eta_n|\le 2 m^2_n$, for any $L>0$, 
in both cases $(i)$ and $(ii)$ we can find  $s>1$ such that
$$\|-\Delta \eta_n \|_{L^{s}(D_L)} \le C.$$
Moreover $\eta_n(0)=0$, thus we can exploit Sobolev's embeddings Theorems and Harnack's inequality to find a uniform bound for $\eta_n$ in $C^{0,\alpha}(D_{\frac{L}{2}})$ for any $L>0$. Hence with a diagonal argument, we find a subsequence of $\eta_n$ such that $\eta\ra \eta_0$ in $H^1_{loc}(\R^2)\cap C^0_{loc}(\R^2)$. 
Moreover $\eta_0$ is a solution of \eqref{eqreg} or \eqref{eqsing}, depending on our choice of $r_n$. It remains to prove \eqref{intreg} and \eqref{intsing} respectively.
In order to do this we observe that in case $(i)$ we compute 

\begin{equation}\label{contr1}
\begin{split}
1&=-\int_{\Sigma} \Delta_g u_n u_n  d v_g= \gamma_n \int_{\Sigma} h\;  u_n^2 e^{b_n u_n^2} d{v_{g}} +\lambda_n\|u_n\|_q^2 \\&\ge \gamma_n \int_{\Omega_{L r_n}(p_n)} h\; u_n^2 e^{b_n u_n^2} dv_{g} +o(1)
\\& = V(0)\int_{D_L} e^{2\ov{\beta} \eta_0} dx  +o(1).
\end{split}
\end{equation}
In particular it holds (see for instance \cite{clChenLi})
\begin{equation}\label{contoreg}
\lim_{L\to +\infty}V(0)\int_{D_L} e^{2\ov{\beta} \eta_0} dx = \frac{1}{1+\ov{\alpha}} \geq1
\end{equation}
where the last inequality follows from the fact that $\ov\alpha\leq 0$. Hence with \eqref{contr1} we obtain 
\eqref{intreg}. 
Similarly, in case $(ii)$ we have
\begin{equation}\label{contr2}
1=-\int_{\Sigma} \Delta_g u_n u_n  d v_g\geq V(0)\int_{D_L}|x+\ov x|^{2\alpha(p)} e^{2\ov{\beta} \eta_0} dx  +o(1).
\end{equation}
On the other hand (cfr. \cite{praj-tar}) 
\begin{equation}\label{contosing}
\lim_{L\to +\infty}V(0)\int_{D_L} |x+\ov x|^{2\alpha(p)}e^{2\ov{\beta} \eta_0} dx = \frac{1+\alpha(p)}{1+\ov{\alpha}} \geq1
\end{equation}
where now the last inequality follows from the minimality of $\ov\alpha$. Therefore \eqref{intsing} is proven. 
\end{proof}

\begin{oss}\label{alfapalfabar}
From the proof of Proposition \ref{rescaling} it follows that if $\ov\alpha<0$ then by \eqref{contr1} and \eqref{contoreg} we have that only case $(ii)$ is possible. Moreover from \eqref{contr2} and \eqref{contosing} we get $\alpha(p)=\ov\alpha$, that is $p$ must be one of the singular points $p_1,\dots,p_m$.

%
%
%
%

\medskip

We stress that Proposition \ref{rescaling} gives us information on the nature of the point $p$ only in the case $\ov\alpha<0$. To have a deeper understanding of the case $\ov\alpha=0$ and a more complete analysis of the blow-up behaviour of $u_n$ near the point $p$ we will need few more steps (see Proposition \ref{keystep}).
\end{oss}

\begin{lemma}\label{contointegrali}
We have
\begin{itemize}
\item[$(i)$] $\lim_{L\to +\infty} \lim_{n\to +\infty} \int_{\Omega_{L_{r_n}}(p_n)} \gamma_n m_n h u_n e^{b_n u_n^2}\, dv_g=1$
\item[$(ii)$] $\lim_{L\to +\infty} \lim_{n\to +\infty} \int_{\Omega_{L_{r_n}}(p_n)} \gamma_n h u_n^2 e^{b_n u_n^2}\, dv_g=1$
\item[$(iii)$] $\lim_{L\to +\infty} \lim_{n\to +\infty} \int_{\Omega_{L_{r_n}}(p_n)} h e^{b_n u_n^2}\, dv_g=\limsup_{n\to +\infty}\frac{1}{\gamma_n m_n^2}.$
\end{itemize}
\end{lemma}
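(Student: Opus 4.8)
The plan is to establish all three identities simultaneously by rescaling around the blow--up point and passing to the limit via Proposition \ref{rescaling}. First I would move to the local chart $(\Omega,\psi)$ of \eqref{psi1}--\eqref{psi2}, in which $dv_g$ pulls back to $e^{\ph}|dx|^2$ and $dv_{g_h}=h\,dv_g$ to $|x|^{2\alpha(p)}V(x)e^{\ph}|dx|^2$, and perform the change of variables $x=x_n+r_ny$; under this substitution $\Omega_{Lr_n}(p_n)$ corresponds to $D_L$ in the $y$--variable, and, writing $v_n=u_n\circ\psi^{-1}$ and $v_n(x_n+r_ny)=m_n+\eta_n(y)/m_n$, one has $e^{b_nv_n^2(x_n+r_ny)}=e^{b_nm_n^2}e^{b_n(2\eta_n+\eta_n^2/m_n^2)}$.

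The core of the argument is that the normalizations \eqref{tn}--\eqref{ttilden} are chosen precisely so that
\[
W_n(y):=\gamma_nm_n^2e^{b_nm_n^2}\,r_n^2\,|x_n+r_ny|^{2\alpha(p)}\,V(x_n+r_ny)\,e^{\ph(x_n+r_ny)}\,e^{b_n(2\eta_n+\eta_n^2/m_n^2)}
\]
converges on compact sets to a finite positive profile. Indeed, when $r_n=\tilde t_n$ (case $(i)$ of Proposition \ref{rescaling}) one computes $\gamma_nm_n^2e^{b_nm_n^2}r_n^2=|x_n|^{-2\alpha(p)}$ and $r_n/|x_n|\to0$, so $W_n\to V(0)e^{2\ov\beta\eta_0}$; when $r_n=t_n$ (case $(ii)$) one has $\gamma_nm_n^2e^{b_nm_n^2}r_n^2=r_n^{-2\alpha(p)}$ and $x_n/r_n\to\ov x$, so $W_n\to V(0)|y+\ov x|^{2\alpha(p)}e^{2\ov\beta\eta_0}$. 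Since $2\eta_n+\eta_n^2/m_n^2\le0$ the exponential factor is bounded by $1$, hence $W_n$ is dominated on $D_L$ by a constant (case $(i)$) or by a constant times $|x_n/r_n+y|^{2\alpha(p)}$ (case $(ii)$); as $\alpha(p)>-1$ and the location of the singularity converges, this family is uniformly integrable on $D_L$, so Vitali's theorem gives $\int_{D_L}W_n\to\int_{D_L}(\,\cdot\,)$, and by monotone convergence together with \eqref{intreg}--\eqref{intsing} this last quantity tends to $1$ as $L\to\infty$.

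With this in hand the three claims become a matter of rewriting. A direct change of variables shows that on $\Omega_{Lr_n}(p_n)$
\[
\gamma_nm_n\,h\,u_ne^{b_nu_n^2}\,dv_g=W_n(y)\Bigl(1+\tfrac{\eta_n}{m_n^2}\Bigr)dy,\qquad \gamma_n\,h\,u_n^2e^{b_nu_n^2}\,dv_g=W_n(y)\Bigl(1+\tfrac{\eta_n}{m_n^2}\Bigr)^{2}dy,
\]
while $h\,e^{b_nu_n^2}\,dv_g=(\gamma_nm_n^2)^{-1}W_n(y)\,dy$. Since $(1+\eta_n/m_n^2)^k\to1$ uniformly on $D_L$, integrating and letting first $n\to\infty$ (along the subsequence of Proposition \ref{rescaling}) and then $L\to\infty$ yields $(i)$ and $(ii)$. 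For $(iii)$ the third identity gives $\int_{\Omega_{Lr_n}(p_n)}h\,e^{b_nu_n^2}dv_g=(\gamma_nm_n^2)^{-1}\int_{D_L}W_n$; here $\int_{D_L}W_n$ converges to a quantity tending to $1$, but $\gamma_nm_n^2$ itself need not converge, so taking $\limsup_{n\to\infty}$ and then $L\to\infty$ produces exactly $\limsup_{n\to\infty}(\gamma_nm_n^2)^{-1}$.

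I expect the only genuine difficulty to be the passage to the limit under the integral sign in the presence of the possibly singular, possibly moving weight $|x_n+r_ny|^{2\alpha(p)}$ when $\alpha(p)<0$; this is exactly why the step above is phrased via uniform integrability and Vitali's theorem rather than plain dominated convergence. Everything else — the bookkeeping of the scaling factors $t_n,\tilde t_n,r_n$ and the pointwise limits of $V,\ph,\eta_n$ near $0$ — has already been carried out in the proof of Proposition \ref{rescaling} and is routine.
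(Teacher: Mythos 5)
Your proof is correct and follows the same route as the paper: rewrite the integrals in the rescaled coordinates of Proposition \ref{rescaling}, identify the factor $\gamma_n m_n^2 e^{b_n m_n^2}r_n^2$ from \eqref{tn}--\eqref{ttilden}, and pass to the limit using \eqref{intreg}--\eqref{intsing}. The paper simply states that $(i)$ and $(ii)$ ``follow easily from Proposition \ref{rescaling}'' and carries out the analogue of your computation only for $(iii)$; your write-up supplies the intermediate change-of-variables bookkeeping and the uniform-integrability step that the paper leaves implicit, but there is no difference in strategy.
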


\begin{proof}
Both $(i)$ and $(ii)$ follow easily from Proposition \ref{rescaling}. We are left with the proof of $(iii)$.

By Proposition \ref{rescaling}, for any $L>0$ we have
\[
\lim_{n\to+\infty} \gamma_n m_n^2 \int_{ \Omega_{L r_n}(p_n)}  h e^{b_n u_n^2} dv_g=1+o_L(1) 
\]
where $o_L(1)\ra 0$ as $L\to \infty$. Hence
$$
\limsup_{n\to \infty}\frac{1}{ \gamma_n m_n^2} = (1+o_L(1))\limsup_{n\to \infty } \int_{ \Omega_{L r_n(p_n)}}  h e^{b_n u_n^2} dv_g
$$
and we can conclude the proof letting $L\to +\infty$. 
\end{proof}

Following \cite{Li1}, for any $A>1$  we define 
\[
u_n^A:=\min\{u_n ,\frac{m_n}{A}\}.
\]

\begin{lemma}\label{letronc}
For any $A>1$ we have 
$$
\limsup_{n\to \infty} \int_{\Sigma}|\nabla u_n^A|^2 dv_g = \frac{1}{A}.
$$
\end{lemma}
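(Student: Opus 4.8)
The plan is to split the Dirichlet energy of $u_n$ over the super-level and sub-level sets of $u_n^A$ and exploit the equation \eqref{equn} to identify each piece. First I would test \eqref{equn} with $u_n^A$: multiplying by $u_n^A$ and integrating by parts gives
$$
\int_{\Sigma}\nabla u_n\cdot\nabla u_n^A\,dv_g=\gamma_n\int_{\Sigma}h\,u_n u_n^A e^{b_n u_n^2}dv_g+\int_{\Sigma}s_n u_n^A\,dv_g.
$$
Since $\nabla u_n\cdot\nabla u_n^A=|\nabla u_n^A|^2$ a.e. (the two gradients agree on $\{u_n<m_n/A\}$ and $\nabla u_n^A=0$ elsewhere), the left-hand side is exactly $\int_\Sigma|\nabla u_n^A|^2dv_g$. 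For the $s_n$ term, $\|u_n^A\|_q\le\|u_n\|_q\to0$ and $\|s_n\|_{q/(q-1)}\to0$ by \eqref{sn1}, so $\int_\Sigma s_n u_n^A\,dv_g\to0$. Thus
$$
\int_{\Sigma}|\nabla u_n^A|^2dv_g=\gamma_n\int_{\Sigma}h\,u_n u_n^A e^{b_n u_n^2}dv_g+o(1).
$$

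Next I would estimate the remaining integral from above and below by $1/A$. On $\{u_n\ge m_n/A\}$ we have $u_n^A=m_n/A$, so that part contributes $\frac{1}{A}\gamma_n\int_{\{u_n\ge m_n/A\}}h\,m_n u_n e^{b_n u_n^2}dv_g\le\frac1A\gamma_n\int_\Sigma h\,u_n^2 e^{b_n u_n^2}dv_g=\frac1A(1+o(1))$ using $u_n\le m_n$ and \eqref{gamma_n}. On $\{u_n<m_n/A\}$ we have $u_n^A=u_n$, so that part equals $\gamma_n\int_{\{u_n<m_n/A\}}h\,u_n^2 e^{b_n u_n^2}dv_g$; the idea is that the "mass" of $\gamma_n h u_n^2 e^{b_n u_n^2}$ concentrates at $p$ at scale $r_n$ (by Lemma \ref{contointegrali}(ii), the mass in $\Omega_{Lr_n}(p_n)$ tends to $1$), and inside such a small ball $u_n\to m_n$ so $u_n\ge m_n/A$ there for large $n$ — hence this ball is \emph{disjoint} from $\{u_n<m_n/A\}$ and the contribution from the sub-level set is $o(1)$. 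Combining, $\limsup_n\int_\Sigma|\nabla u_n^A|^2dv_g\le\frac1A$.

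For the matching lower bound I would note $\gamma_n\int_\Sigma h\,u_n u_n^A e^{b_n u_n^2}dv_g\ge\frac1A\gamma_n\int_{\{u_n\ge m_n/A\}}h\,m_n u_n e^{b_n u_n^2}dv_g$, and by Lemma \ref{contointegrali}(i) the integral of $\gamma_n m_n h u_n e^{b_n u_n^2}$ over $\Omega_{Lr_n}(p_n)\subset\{u_n\ge m_n/A\}$ tends to $1+o_L(1)$; letting $n\to\infty$ then $L\to\infty$ gives $\liminf_n\int_\Sigma|\nabla u_n^A|^2dv_g\ge\frac1A$, which closes the argument. The main obstacle is the bookkeeping showing that $\{u_n<m_n/A\}$ carries no mass of $\gamma_n h u_n^2 e^{b_n u_n^2}$: one needs that for every $L$, $\Omega_{Lr_n}(p_n)\subset\{u_n\ge m_n/A\}$ for $n$ large (which follows from $\eta_n\to\eta_0$ locally uniformly, so $u_n/m_n\to1$ on $\Omega_{Lr_n}(p_n)$ via \eqref{etaenne}), combined with Lemma \ref{contointegrali}(ii) to conclude that the leftover mass outside all such balls vanishes; this is exactly the blow-up localization already set up in Proposition \ref{rescaling} and Lemma \ref{contointegrali}, so it is available but must be invoked carefully.
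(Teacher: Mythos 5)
Your integration by parts and the use of \eqref{equn}, \eqref{sn1} to kill the $s_n$ term are exactly the paper's opening moves, and your lower bound (restrict to $\Omega_{Lr_n}(p_n)\subset\{u_n>m_n/A\}$ and apply Lemma~\ref{contointegrali}(i)) is precisely the paper's inequality~\eqref{troc}. The issue is with your upper bound. The displayed inequality
\[
\frac{1}{A}\gamma_n\int_{\{u_n\ge m_n/A\}}h\,m_n u_n e^{b_n u_n^2}\,dv_g\le\frac1A\gamma_n\int_\Sigma h\,u_n^2 e^{b_n u_n^2}\,dv_g
\]
``using $u_n\le m_n$'' is backwards: on the set $\{u_n\ge m_n/A\}$ one has $u_n>0$, so $u_n\le m_n$ gives $u_n^2\le m_n u_n$, i.e.\ the \emph{opposite} pointwise comparison. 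The step as written is unjustified (and the pointwise direction is against you), so the claimed $\limsup\le 1/A$ does not follow from this chain.

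Two ways to close the gap. A repair of your route: split the integral over $\Omega_{Lr_n}(p_n)$ and its complement instead of over super/sub-level sets. On $\Omega_{Lr_n}(p_n)$ one has $u_n^A=m_n/A$ for large $n$, so that piece is $\frac1A\gamma_n m_n\int_{\Omega_{Lr_n}(p_n)}h\,u_n e^{b_n u_n^2}\to\frac1A(1+o_L(1))$ by Lemma~\ref{contointegrali}(i); on the complement use $|u_n u_n^A|\le u_n^2$ (true since $|u_n^A|\le|u_n|$) together with Lemma~\ref{contointegrali}(ii) and \eqref{gamma_n} to see that piece is $o_L(1)$. What the paper actually does is different and avoids an upper bound entirely: it runs the \emph{same} lower-bound argument with $(u_n-m_n/A)^+$ in place of $u_n^A$, obtaining $\liminf_n\|\nabla(u_n-m_n/A)^+\|_2^2\ge\frac{A-1}{A}$, and then invokes the pointwise decomposition $u_n=u_n^A+(u_n-m_n/A)^+$ with the orthogonality $\int_\Sigma\nabla u_n^A\cdot\nabla(u_n-m_n/A)^+\,dv_g=0$ and the constraint $\|\nabla u_n\|_2^2=1$; the two $\liminf$'s must then both be limits and sum to $1$. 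That argument is shorter and bypasses the delicate cancellation you were trying to exploit on the super-level set.
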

\begin{proof}
Integrating by parts we have
\[
\liminf_{n\to \infty}\int_{\Sigma}|\nabla u_n^A|^2 dv_g = \liminf_{n\to \infty}\int_{\Sigma}\nabla u_n^A \cdot \nabla u_n dv_g = \liminf_{n\to+\infty}-\int_{\Sigma} \Delta_g u_n u_n^A\, dv_g.
\] 
Fix now $L>0$. By Proposition \ref{rescaling}, for sufficiently large $n$, $\Omega_{L r_n}(p_n) \subseteq\{u_n >\frac{m_n}{A}\}$, hence using \eqref{equn} and \eqref{sn2} we find
\[
-\int_{\Sigma} \Delta_g u_n \; u_n^A dv_g = \gamma_n \int_{\Sigma} h u_n e^{b_n u_n^2} u_n^Adv_{g} +o(1) \ge  \frac{\gamma_n m_n}{A} \int_{\Omega_{L r_n}(p_n)}  h\; u_n e^{b_n u_n^2} dv_{g} +o(1).
\]
Hence passing to the limit as $n,L\to+\infty$ we obtain 
\begin{equation}\label{troc}
\liminf_{n\to \infty}\int_{\Sigma}|\nabla u_n^A|^2 dv_g = \liminf_{n\to+\infty}-\int_{\Sigma} \Delta_g u_n u_n^A\, dv_g \ge \frac{1}{A}
\end{equation}
where the last inequality follows from Lemma \ref{contointegrali}.
Similarly 
$$
-\int_{\Sigma} \Delta_g u_n \left(u_n -\frac{m_n}{A}\right)^+ dv_g \ge \gamma_n \int_{\Omega_{L r_n}(p_n)} h\; u_n e^{b_n u_n^2}\left( u_n-\frac{m_n}{A}\right) dv_{g}+o(1)$$
we get
\begin{equation}\label{tronc}
\liminf_{n\to \infty} \int_{\Sigma} |\nabla (u_n-\frac{m_n}{A})^+|^2dv_g \ge \frac{A-1}{A},
\end{equation}
again from Lemma \ref{contointegrali}.
Clearly $u_n= u_n^A + (u_n-\frac{m_n}{A})^+$ and $\int_{\Sigma}\nabla u_n^A \cdot \nabla (u_n-\frac{m_n}{A})^+ dv_g=0$ thus $$
1=\int_{\Sigma} |\nabla u_n|^2 dv_g = \int_{\Sigma}|\nabla u_n^A|^2 dv_g +\int_{\Sigma}|\nabla \left(u_n-\frac{m_n}{A}\right)^+|^2 dv_g
$$ 
and from \eqref{troc} and  \eqref{tronc} we find 
$$
\lim_{n\to \infty} \int_{\Sigma}|\nabla u_n^A|^2 dv_g = \frac{1}{A} \quad \mbox{ and }\quad 
\lim_{n\to \infty} \int_{\Sigma}|\nabla \left(u_n-\frac{m_n}{A}\right)^+|^2 dv_g = \frac{A-1}{A}.
$$
\end{proof}

With Lemma \ref{letronc} we have a first rough version of Proposition \ref{finale}.

\begin{lemma}\label{supspezzato}
$$
\limsup_{n\to \infty} E_n(u_n)\le \lim_{L\to+\infty}\lim_{n\to+\infty}\int_{\Omega_{L_{r_n}}(p_n)}he^{b_n u_n^2}\, dv_g + |\Sigma|_{g_h}.
$$
\end{lemma}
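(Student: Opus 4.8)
The plan is to decompose $\Sigma$ into the concentration ball $\Omega_{L r_n}(p_n)$, an intermediate ``neck'' region, and the region where $u_n$ stays below $\frac{m_n}{A}$ (for a fixed $A>1$), and estimate the three pieces separately. Setting $u_n^A=\min\{u_n,\frac{m_n}{A}\}$ and using Proposition \ref{rescaling} (which gives $\Omega_{L r_n}(p_n)\subseteq\{u_n>\frac{m_n}{A}\}$ for $n$ large), I would write, for $n$ large,
\[
E_n(u_n)=S_n+P_{n,L}+N_{n,L},\qquad
\begin{cases}
S_n:=\int_{\{u_n\le m_n/A\}}h e^{b_n u_n^2}dv_g,\\[2pt]
P_{n,L}:=\int_{\Omega_{L r_n}(p_n)}h e^{b_n u_n^2}dv_g,\\[2pt]
N_{n,L}:=\int_{\{u_n>m_n/A\}\setminus\Omega_{L r_n}(p_n)}h e^{b_n u_n^2}dv_g.
\end{cases}
\]
Since $P_{n,L}$ is exactly the quantity on the right-hand side of the statement, it suffices to prove $\limsup_n S_n\le|\Sigma|_{g_h}$ and that $N_{n,L}$ is negligible with respect to $P_{n,L}$ in the iterated limit.

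For the term $S_n$ I would note that $u_n=u_n^A$ on $\{u_n\le\frac{m_n}{A}\}$, so $S_n\le\int_\Sigma h e^{b_n(u_n^A)^2}dv_g$. By Lemma \ref{letronc}, $\|\nabla u_n^A\|_2^2\to\frac1A<1$; moreover $|u_n^A|\le|u_n|$ and $u_n\to0$ in $L^1(\Sigma)$ with $u_n\to0$ a.e. (along a subsequence) force $\ov{u_n^A}:=\frac1{|\Sigma|}\int_\Sigma u_n^A\,dv_g\to0$ and $u_n^A\to0$ a.e. Writing $w_n:=u_n^A-\ov{u_n^A}$, the bound $(u_n^A)^2\le(1+\eps)w_n^2+C_\eps\ov{u_n^A}^2$ together with $\frac{w_n}{\|\nabla w_n\|_2}\in\mathcal H$ and $p(1+\eps)b_n\|\nabla w_n\|_2^2\to\frac{p(1+\eps)}{A}\ov\beta$ (which is $<\ov\beta$ for $p=\sqrt A$ and $\eps$ small) shows, via the singular Moser-Trudinger inequality \eqref{MT Troy}, that $e^{b_n(u_n^A)^2}$ is bounded in $L^p(\Sigma,g_h)$ for some $p>1$. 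Since $u_n^A\to0$ a.e., Vitali's convergence theorem gives $\int_\Sigma h e^{b_n(u_n^A)^2}dv_g\to|\Sigma|_{g_h}$, hence $\limsup_n S_n\le|\Sigma|_{g_h}$.

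For the neck term I would use that $u_n>\frac{m_n}{A}$ implies $1\le\frac{A^2 u_n^2}{m_n^2}$, so
\[
N_{n,L}\le\frac{A^2}{m_n^2}\int_{\Sigma\setminus\Omega_{L r_n}(p_n)}h u_n^2 e^{b_n u_n^2}dv_g=\frac{A^2}{m_n^2}\big(\mu_n-\widetilde\mu_{n,L}\big),
\]
with $\mu_n:=\int_\Sigma h u_n^2 e^{b_n u_n^2}dv_g$ and $\widetilde\mu_{n,L}:=\int_{\Omega_{L r_n}(p_n)}h u_n^2 e^{b_n u_n^2}dv_g$. Now $\gamma_n\mu_n\to1$ by \eqref{gamma_n}, $\gamma_n\widetilde\mu_{n,L}\to1$ in the iterated limit by Lemma \ref{contointegrali}$(ii)$, and $\gamma_n m_n^2 P_{n,L}\to1+o_L(1)$ as in the proof of Lemma \ref{contointegrali}$(iii)$. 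Dividing and multiplying by $\gamma_n m_n^2$ then yields
\[
N_{n,L}\le A^2\,\frac{\gamma_n\mu_n-\gamma_n\widetilde\mu_{n,L}}{\gamma_n m_n^2}=\big(o_L(1)+o_n(1)\big)P_{n,L},
\]
so that $E_n(u_n)\le S_n+(1+o_L(1)+o_n(1))P_{n,L}$; taking $\limsup_n$ and then $L\to\infty$, and recalling Lemma \ref{contointegrali}$(iii)$, gives the assertion. The main obstacle is precisely this neck estimate: one must show that, once divided by $m_n^2$, the integral over the intermediate region $\{u_n>\frac{m_n}{A}\}\setminus\Omega_{L r_n}(p_n)$ is controlled by $P_{n,L}$ up to errors vanishing as $L\to\infty$, which relies on the sharp normalizations $\gamma_n\mu_n\to1$ and $\gamma_n m_n^2 P_{n,L}\to1+o_L(1)$ coming from the blow-up analysis; the $S_n$ part is comparatively routine, the only care needed being to pass to the mean-zero truncation $w_n$ before invoking \eqref{MT Troy}.
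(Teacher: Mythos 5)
Your decomposition into $P_{n,L}$, $N_{n,L}$, $S_n$ and the subsequent estimates are correct, and the key ingredients (the normalization $\gamma_n\mu_n\to 1$ from \eqref{gamma_n}, Lemma \ref{letronc} for the truncation energy, Lemma \ref{contointegrali} for the iterated limits, and Vitali for $S_n$) are exactly the ones the paper uses. However, the paper avoids the separate neck estimate altogether: it keeps the two-piece split $E_n(u_n)=\int_{\{u_n\ge m_n/A\}}+\int_{\{u_n\le m_n/A\}}$, bounds the \emph{entire} superlevel set at once by
\[
\int_{\{u_n\ge m_n/A\}} h\, e^{b_n u_n^2}\,dv_g\le \frac{A^2}{m_n^2}\int_\Sigma h\,u_n^2 e^{b_n u_n^2}\,dv_g=\frac{A^2}{\gamma_n m_n^2}\bigl(1+o(1)\bigr),
\]
and only afterwards sends $A\to 1$ and invokes Lemma \ref{contointegrali}$(iii)$ to identify $\limsup_n\frac{1}{\gamma_n m_n^2}$ with the iterated limit of $P_{n,L}$. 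Your version introduces the intermediate step $\gamma_n m_n^2 P_{n,L}\to 1+o_L(1)$ to show $N_{n,L}=(o_L(1)+o_n(1))P_{n,L}$, which is extra bookkeeping that ultimately depends on the same asymptotic used in Lemma \ref{contointegrali}$(iii)$ anyway; the paper's route is shorter because it never needs to compare the neck contribution with $P_{n,L}$, only with $\frac1{\gamma_n m_n^2}$. On the other hand, your treatment of $S_n$ is slightly more careful than the paper's: you explicitly subtract the mean before applying \eqref{MT Troy}, whereas the paper passes this under the rug. Both proofs are valid; yours buys a more explicit isolation of the concentration ball at the cost of a redundant neck estimate.
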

\begin{proof}
For any $A>1$ we have 
$$
E_n(u_n) = \int_{\left\{u_n\ge \frac{m_n}{A}\right\}} h e^{b_n u_n^2} dv_g + \int_{\left\{u_n\le  \frac{m_n}{A}\right\}} h e^{b_n (u_n^A)^2} dv_g.
$$
By \eqref{gamma_n},
$$
\int_{\left\{u_n\ge \frac{m_n}{A}\right\}} h e^{b_n u_n^2} dv_g \le \frac{A^2}{m_n^2 }\int_{\Sigma} h u_n^2e^{b_n u_n^2} dv_g =  \frac{A^2}{\gamma_n m_n^2}(1+o(1)).
$$
For the last integral we apply Lemma \ref{letronc}. Since $\limsup_{n\to \infty} \|\nabla u_n^A\|_{2}^2\le \frac{1}{A}<1$, \eqref{MT Troy} implies that $e^{b_n (u_n^A)^2}$ is uniformly bounded in $L^s(\Sigma,g_h)$ for some $s>1$. Thus by Vitali's Theorem 
$$
\int_{\left\{u_n\le  \frac{m_n}{A}\right\}} h e^{b_n (u_n^A)^2} dv_g \le \int_{\Sigma} h e^{b_n (u_n^A)^2} dv_g \ra |\Sigma|_{g_h}.
$$
Therefore we proved 
$$
\limsup_{n\to \infty} E_n(u_n)\le \limsup_{n\to \infty} \frac{A^2}{\gamma_n m_n^2} + |\Sigma|_{g_h}.
$$
As $A \to 1$ we get the conclusion thanks to Lemma \ref{contointegrali}.
\end{proof}

\begin{lemma}\label{mis}
We have
$$\gamma_nm_nh \, u_n e^{b_n u_n^2}\rightharpoonup \delta_p$$
weakly as measures as $n\to +\infty$.
\end{lemma}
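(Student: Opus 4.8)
The plan is to deduce the weak‑$*$ convergence from a single mass statement, namely
\[
\int_\Sigma \gamma_n m_n h\,|u_n|\,e^{b_n u_n^2}\,dv_g \longrightarrow 1 .
\]
Granting this, the rest is soft. The sets $\Omega_{Lr_n}(p_n)=\psi^{-1}(D_{Lr_n}(x_n))$ shrink to $\{p\}$ as $n\to\infty$ (since $x_n\to0$ and $r_n\to0$), and by Lemma~\ref{contointegrali}$(i)$ the mass they carry tends to $1$ after letting $L\to\infty$; hence the $|\cdot|$‑mass outside $\Omega_{Lr_n}(p_n)$ — in particular the part where $u_n<0$ — tends to $0$. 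Then for $\phi\in C^0(\Sigma)$ one writes $\int_\Sigma\phi\,\gamma_n m_n h u_n e^{b_n u_n^2}dv_g$ as a contribution from $\Omega_{Lr_n}(p_n)$, where $u_n>0$ by Proposition~\ref{rescaling} and $\phi\to\phi(p)$ uniformly, plus a remainder bounded by $\|\phi\|_\infty$ times the outer mass; passing to the limit in $n$ and then in $L$ gives $\int_\Sigma\phi\,\gamma_n m_n h u_n e^{b_n u_n^2}dv_g\to\phi(p)$.

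The heart of the matter is therefore $\limsup_n\int_\Sigma\gamma_n m_n h|u_n|e^{b_n u_n^2}dv_g\le1$, the opposite inequality being immediate from Lemma~\ref{contointegrali}$(i)$. I would split $\Sigma$ using the truncation $u_n^A:=\min\{u_n,m_n/A\}$ of Lemma~\ref{letronc}. On $\{u_n\ge m_n/A\}$, which for large $n$ contains $\Omega_{Lr_n}(p_n)$ (there $u_n=m_n(1+o(1))$ by Proposition~\ref{rescaling}), the contribution of $\Omega_{Lr_n}(p_n)$ is $1+o_L(1)$ by Lemma~\ref{contointegrali}$(i)$; on the complement one uses $u_n\le\frac{A}{m_n}u_n^2$, so that contribution is at most $A\gamma_n\int_{\{u_n\ge m_n/A\}\setminus\Omega_{Lr_n}(p_n)}h u_n^2e^{b_n u_n^2}dv_g$, and combining Lemma~\ref{contointegrali}$(ii)$ with $\gamma_n\mu_n\to1$ (Lemma~\ref{equazione}) shows $\gamma_n\int_{\{u_n\ge m_n/A\}}h u_n^2e^{b_n u_n^2}dv_g\to1$, whence this piece is $o_L(1)$, and likewise $\gamma_n\int_{\{u_n<m_n/A\}}h u_n^2e^{b_n u_n^2}dv_g\to0$. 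The set $\{u_n\le-m_n/A\}$ is treated the same way via $|u_n|\le\frac{A}{m_n}u_n^2$, giving a contribution $\le A\gamma_n\int_{\{u_n<m_n/A\}}h u_n^2e^{b_n u_n^2}dv_g\to0$. Finally, on $\{|u_n|<m_n/A\}$ one has $u_n=u_n^A$ with $|u_n^A|<m_n/A$, so the contribution is $\le\frac{\gamma_n m_n^2}{A}\int_\Sigma he^{b_n(u_n^A)^2}dv_g$; since $\|\nabla u_n^A\|_2^2\to1/A<1$ (Lemma~\ref{letronc}), $b_n\to\ov{\beta}$, and $\ov{u_n^A}\to0$, applying \eqref{MT Troy} to $(u_n^A-\ov{u_n^A})/\|\nabla u_n^A\|_2\in\mathcal{H}$ bounds $\int_\Sigma he^{b_n(u_n^A)^2}dv_g$. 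Summing the four pieces and letting $L\to\infty$ and then $A\to\infty$ yields the claim.

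The delicate point — and the main obstacle — is that the last estimate carries a factor $\gamma_n m_n^2$, which must be shown to stay bounded; this is not yet available at this stage (hence the $\limsup$ in the statements of Lemma~\ref{contointegrali}$(iii)$ and Lemma~\ref{supspezzato}). I would establish it by contradiction: if $\gamma_n m_n^2\to+\infty$ along a subsequence, then $\limsup_n(\gamma_n m_n^2)^{-1}=0$ along it, so Lemma~\ref{contointegrali}$(iii)$ gives $\lim_L\lim_n\int_{\Omega_{Lr_n}(p_n)}he^{b_n u_n^2}dv_g=0$, and Lemma~\ref{supspezzato} then forces $\limsup_n E_n(u_n)\le|\Sigma|_{g_h}$; but choosing any fixed $v\in\mathcal{H}$ with $v\not\equiv0$ gives $E_n(u_n)\ge E_n(v)\to\int_\Sigma he^{\ov{\beta}v^2(1+\lambda\|v\|_q^2)}dv_g>|\Sigma|_{g_h}$, a contradiction. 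Hence $\gamma_n m_n^2$ is bounded and the scheme runs through. The remaining verifications — that $u_n=u_n^A$ on $\{u_n<m_n/A\}$, that $\Omega_{Lr_n}(p_n)\subset\{u_n\ge m_n/A\}$ eventually, and that $\ov{u_n^A}\to0$ (because $|u_n^A|\le|u_n|$ and $u_n\to0$ in $L^2$) — are routine.
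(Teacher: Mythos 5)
Your proof follows essentially the same strategy as the paper's, with two mild reorganizations. The paper tests $\gamma_n m_n h\, u_n e^{b_n u_n^2}$ directly against a test function $\xi\in C^0(\Sigma)$ and decomposes $\Sigma$ into three pieces: $\Omega_{Lr_n}(p_n)$, $\{u_n>m_n/A\}\setminus\Omega_{Lr_n}(p_n)$, and $\{u_n\le m_n/A\}$, using Lemma~\ref{contointegrali}$(i)$ for the first, $(ii)$ for the second, and H\"older's inequality on the third to get $|I_n^3|\le \gamma_n m_n\|\xi\|_\infty\|u_n\|_{s',h}\|e^{\ov\beta(u_n^A)^2}\|_{s,h}=\gamma_n m_n\,o(1)$. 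You instead first establish the $L^1$--mass statement $\int_\Sigma\gamma_n m_n h|u_n|e^{b_n u_n^2}\to 1$ via a four-piece decomposition, and then deduce the weak-$*$ convergence from it; this is correct but slightly indirect, since the weak-$*$ claim can be read off the decomposition without passing through the intermediate mass identity. On the ``small-$u_n$'' region you use the pointwise bound $|u_n|\le m_n/A$, producing a factor $\gamma_n m_n^2/A$ to be killed by $A\to\infty$, whereas the paper's H\"older route yields the weaker requirement $\gamma_n m_n\,o(1)$ directly. Your explicit contradiction argument for the boundedness of $\gamma_n m_n^2$ (via Lemma~\ref{contointegrali}$(iii)$, Lemma~\ref{supspezzato}, Lemma~\ref{conv sup}, and the strict inequality $\sup_{\mathcal H}E_{\Sigma,h}^{\ov\beta,\lambda,q}>|\Sigma|_{g_h}$) is a genuine contribution: the paper states ``$\gamma_n m_n\to 0$'' with only a terse citation of Lemmas~\ref{contointegrali}$(iii)$ and~\ref{supspezzato}, and the contradiction you spell out is precisely the clean way to justify it. So the two proofs are the same in substance; yours is marginally longer but fills in the one step the paper compresses.
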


\begin{proof}
Take $\xi \in C^0(\Sigma)$. For $L>0$, $A>1$ we have
\[
\begin{split}
&\gamma_n m_n \int_{\Sigma}  h\; u_n e^{b_n u_n^2} \xi dv_{g}\\&  
= \gamma_n m_n \int_{\Omega_{L r_n}(p_n)}  h u_n e^{b_n u_n^2} \xi  dv_{g}\\&  
+  \gamma_n m_n \int_{\{u_n>\frac{m_n}{A}\}\bs \Omega_{L r_n}(p_n)}h u_n  e^{b_n u_n^2} \xi dv_{g}\\&
+  \gamma_n m_n \int_{\{u_n\le\frac{m_n}{A}\}}  h u_n e^{b_n u_n^2} \xi dv_{g}\\&
=:I_n^1+I_n^2+I_n^3.
\end{split}
\] 
We have
\[
I_n^1=\gamma_n m_n\int_{\Omega_{L_{r_n}}(p_n)} h u_n e^{b_n u_n^2}(\xi-\xi(p))\, dv_g+\gamma_n m_n\int_{\Omega_{L_{r_n}}(p_n)} h u_n e^{b_n u_n^2}\xi(p)\, dv_g.
\]
From $\|\xi-\xi(p)\|_{L_\infty(\Omega_{L_{r_n}}(p_n))}\to 0$ as $n\to+\infty$ and Lemma \ref{contointegrali} we have
$$\lim_{L\to \infty}\lim_{n\to \infty} I_n^1 = \xi(p).$$
Similarly, using \eqref{gamma_n}
\[
\begin{split}
| I_n^2| &\leq m_n\int_{\{u_n>\frac{m_n}{A}\}\bs \Omega_{L r_n}(p_n)} \gamma_n h u_n e^{b_n u_n^2} | \xi |dv_{g} \\&
\le A \int_{\{u_n>\frac{m_n}{A}\}\bs \Omega_{L r_n}(p_n)}  \gamma_n h u_n^2 e^{b_n u_n^2} | \xi | dv_{g} \\&
\leq A \|\xi\|_{L^\infty(\Sigma)} \left(1 -\int_{\Omega_{L r_n}(p_n)} \gamma_n h u_n^2 e^{b_n u_n^2}  dv_{g} +o(1)\right).
\end{split}
\]
Therefore, from Lemma \ref{contointegrali},
$$
\lim_{L\to \infty} \lim_{n\to \infty} I_n^2 =0.
$$
For the last integral by Lemma \ref{letronc} and 
\eqref{MT Troy} there exist $s>1, C>0$ such that 
$$
\int_{\Sigma} h e^{s \ov{\beta}(u_n^A)^2} dv_{g}\le C
$$
thus 
$$
|I_n^3 |\le   \gamma_n m_n \|\xi\|_\infty \int_{\Sigma} h | u_n |e^{b_n (u_n)^2} dv_g \le \gamma_n m_n \|\xi\|_\infty \|u_n\|_{s',h} \| e^{\ov{\beta}(u_n)^2}\|_{s,h}= \gamma_n m_n o(1).
$$ 
By $(iii)$ in Lemma \ref{contointegrali} and Lemma \ref{supspezzato} we get that $\gamma_n m_n\ra 0$ and hence we find $|I_n^3| \ra 0 $ which gives the conclusion. 
\end{proof}

Let now $G^\lambda_p$ be the Green's function defined in \eqref{defGR}. Using  Lemma \ref{mis} we obtain:

\begin{lemma}\label{conv Gr}
$m_nu_n \ra G^\lambda_p$ in $C^0_{loc}(\Sigma\bs\{p\})\cap H^1_{loc}(\Sigma\bs\{p\})\cap L^s(\Sigma)$ $\forall\; s>1$.
\end{lemma}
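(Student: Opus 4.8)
Write $w_n:=m_nu_n$. Multiplying \eqref{equn} by $m_n$ we obtain
$$
-\Delta_g w_n=\underbrace{\gamma_n m_n h\,u_n e^{b_n u_n^2}}_{=:f_n}+\underbrace{m_n s_n}_{=:g_n},\qquad \int_\Sigma w_n\,dv_g=0,
$$
and Lemma \ref{mis} tells us that $f_n\rw\delta_p$ weakly as measures, whence also $\int_\Sigma f_n\,dv_g\ra 1$. The plan has three steps: (1) prove that $w_n$ is bounded in $W^{1,r}(\Sigma)$ for every $r\in[1,2)$; (2) pass to the limit in the displayed equation and identify the (subsequential) limit with the solution $G_p^\lambda$ of \eqref{defGR}; (3) upgrade the convergence to $C^0_{loc}$ and $H^1_{loc}$ on $\Sigma\bs\{p\}$ by interior elliptic estimates. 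The $L^s(\Sigma)$ convergence is then immediate from the compact embedding $W^{1,r}(\Sigma)\hookrightarrow L^s(\Sigma)$, $r<2$.

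For Step (1), a preliminary observation is $\|f_n\|_{L^1(\Sigma)}\le C$: split $\Sigma$ at the level $\{|u_n|>m_n/A\}$ (where $m_n|u_n|\le A u_n^2$, so $\gamma_n m_n\int h|u_n|e^{b_nu_n^2}\le A\,\gamma_n\int h u_n^2 e^{b_nu_n^2}$, bounded by \eqref{gamma_n}) versus $\{|u_n|\le m_n/A\}$ (where $u_n=u_n^A$, so Lemma \ref{letronc} and \eqref{MT Troy} give an $L^{s_0}(\Sigma,g_h)$ bound on $e^{b_n(u_n^A)^2}$ for some $s_0>1$, and the remaining factor $\gamma_nm_n\ra 0$ by Lemma \ref{mis}). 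The crucial point is then to bound $\Lambda_n:=m_n\|u_n\|_q=\|w_n\|_q$; granting this, $g_n=\lambda_n\Lambda_n^{\,2-q}|w_n|^{q-2}w_n-m_nc_n$ is bounded in $L^{q/(q-1)}(\Sigma)$, and the standard $W^{1,r}$ estimate for $-\Delta_g$ with right-hand side in $L^1+L^{q/(q-1)}$ gives the claim. To bound $\Lambda_n$ we argue by contradiction. Suppose $\Lambda_n\ra\infty$ along a subsequence and set $\tilde w_n:=w_n/\Lambda_n$, so that $\|\tilde w_n\|_q=1$, $\int_\Sigma\tilde w_n\,dv_g=0$, and
$$
-\Delta_g\tilde w_n=\lambda_n|\tilde w_n|^{q-2}\tilde w_n-\frac{m_nc_n}{\Lambda_n}+\frac{f_n}{\Lambda_n},
$$
where $\frac{f_n}{\Lambda_n}\ra 0$ in $L^1(\Sigma)$ and $\frac{m_nc_n}{\Lambda_n}$ stays bounded (using $\|f_n\|_1\le C$, $\bigl|\int|u_n|^{q-2}u_n\bigr|\le\|u_n\|_q^{q-1}|\Sigma|^{1/q}$, and $\lambda_n\ra\lambda$). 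Since the right-hand side is bounded in $L^{q/(q-1)}+L^1$, $\tilde w_n$ is bounded in $W^{1,r_0}(\Sigma)$ for any fixed $r_0<2$; choosing $r_0$ with $\frac{2r_0}{2-r_0}>q$, compactness gives $\tilde w_n\ra\tilde w$ strongly in $L^q$, with $\|\tilde w\|_q=1$ and $\int_\Sigma\tilde w\,dv_g=0$. The nonlinear term passes to the limit (continuity of $t\mapsto|t|^{q-2}t$ together with $\|\tilde w_n\|_q\ra\|\tilde w\|_q$), so $-\Delta_g\tilde w=\lambda|\tilde w|^{q-2}\tilde w-\kappa$ weakly for some constant $\kappa$. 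Elliptic regularity yields $\tilde w\in W^{2,q/(q-1)}(\Sigma)\subset H^1(\Sigma)$, so we may test with $\tilde w$ and use $\int_\Sigma\tilde w\,dv_g=0$ to get $\int_\Sigma|\nabla\tilde w|^2\,dv_g=\lambda\|\tilde w\|_q^q=\lambda$; but $\int_\Sigma|\nabla\tilde w|^2\,dv_g\ge\lambda_q(\Sigma,g)\|\tilde w\|_q^2=\lambda_q(\Sigma,g)$ by the very definition of $\lambda_q(\Sigma,g)$, contradicting $\lambda<\lambda_q(\Sigma,g)$. An analogous but simpler argument rules out $\Lambda_n\ra 0$: otherwise $g_n\ra-\tfrac1{|\Sigma|}$ in $L^{q/(q-1)}$ and $w_n$ would converge to the nonzero function $G_p^0$ solving $-\Delta_g G_p^0=\delta_p-\tfrac1{|\Sigma|}$, while $\|w_n\|_q=\Lambda_n\ra 0$ forces the limit to vanish. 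Hence $0<\liminf\Lambda_n\le\limsup\Lambda_n<\infty$.

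With Step (1) in hand, extract $w_n\rw w$ in $W^{1,r}(\Sigma)$ and $w_n\ra w$ in $L^s(\Sigma)$ for all $s$, with $\int_\Sigma w\,dv_g=0$. Because $\Lambda_n=\|w_n\|_q$ stays in a compact subset of $(0,\infty)$, the term $g_n$ converges in distributions to $\lambda\|w\|_q^{2-q}|w|^{q-2}w-\tfrac1{|\Sigma|}\bigl(1+\lambda\|w\|_q^{2-q}\int_\Sigma|w|^{q-2}w\,dv_g\bigr)$ (using $\int_\Sigma f_n\,dv_g\ra 1$ and $\lambda_n\ra\lambda$), while $f_n\rw\delta_p$; passing to the limit in the equation for $w_n$ shows that $w$ solves \eqref{defGR}, i.e. $w=G_p^\lambda$, and the limit being independent of the subsequence, the whole sequence converges. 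For Step (3), fix $\Omega\Subset\Sigma\bs\{p\}$. Since $|\nabla u_n|^2 dv_g\rw\delta_p$, Lemma \ref{punto} (applied on a slightly larger neighborhood) gives $\|u_n\|_{L^\infty(\Omega)}\le C$, hence by \eqref{equn} $-\Delta_g u_n$ is bounded in $L^{p_0}(\Omega)$ for some $p_0>1$ (here $h\in L^{p_0}_{loc}$ because each $\alpha_i>-1$), so $u_n$ is bounded in $W^{2,p_0}_{loc}(\Omega)$; combined with $u_n\ra 0$ in $L^s(\Sigma)$ this gives $u_n\ra 0$ in $C^0_{loc}(\Omega)$. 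Consequently $f_n\ra 0$ in $L^{p_0}_{loc}(\Omega)$ (using $\gamma_nm_n\ra 0$) and, $w_n$ being now locally bounded, $g_n$ converges in $L^{p_0}_{loc}(\Omega)$; interior elliptic estimates applied to $-\Delta_g w_n=f_n+g_n$ then give $w_n\ra G_p^\lambda$ in $W^{2,p_0}_{loc}(\Omega)\subset C^0_{loc}(\Omega)\cap H^1_{loc}(\Omega)$.

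The main difficulty is Step (1), and specifically the boundedness of $\Lambda_n=m_n\|u_n\|_q$: this is the only place in the blow-up analysis at this stage where the hypothesis $\lambda<\lambda_q(\Sigma,g)$ is used, and it is indispensable, since without an $L^q$ bound on $w_n$ one cannot even make sense of the limiting equation \eqref{defGR} (nor of the nonlinear terms appearing in the later estimates).
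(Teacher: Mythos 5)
Your proof is correct and follows essentially the same route as the paper's: bound $\|m_nu_n\|_q$ by a contradiction argument built on $\lambda<\lambda_q(\Sigma,g)$ applied to the normalization $u_n/\|u_n\|_q$, obtain uniform $W^{1,r}(\Sigma)$ bounds from the $L^1+L^{q/(q-1)}$ bound on $-\Delta_g(m_nu_n)$, identify the limit with $G_p^\lambda$ via Lemma \ref{mis}, and then upgrade to $C^0_{loc}\cap H^1_{loc}$ away from $p$ using Lemma \ref{punto} and interior elliptic estimates. You are somewhat more explicit than the paper in a few places — writing out the $L^1$ bound on $f_n$ (where the paper just says ``arguing as in Lemma \ref{mis}''), carrying out the regularity bootstrap and the test-function step that closes the contradiction, and noting that $\Lambda_n$ must stay away from $0$ — but these are details the paper's argument uses implicitly, not a different strategy.
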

\begin{proof}
First we observe that $\|m_n u_n \|_{q}$ is uniformly bounded. If not we could consider the sequence $w_n := \frac{ u_n}{\|u_n\|_q}$  which satisfies 
$$
-\Delta_g w_n = \gamma_n h \frac{u_n}{ \| u_n\|_q}  e^{b_n u_n^2} + \frac{ s_n}{\| u_n\|_q }
$$
Arguing as in Lemma \ref{mis} one can prove that  $\|\gamma_n h m_n u_n e^{b_n u_n^2}\|_{1}\le C$ and hence it follows
$$  \frac{\|\gamma_n h u_n  e^{b_n u_n^2}\|_1}{\| u_n\|_q}=\frac{\|\gamma_n h m_n u_n  e^{b_n u_n^2}\|_1}{\|m_n u_n\|_q}\to 0 $$
as $n\to +\infty$. Moreover it is easy to check with \eqref{sn2} and \eqref{c_n} that 
$$\|s_n\|_{1}\le C \|u_n\|_q $$
and we have a uniform bound for $-\Delta_g w_n$ in $L^1(\Sigma)$. Therefore $w_n$ is uniformly bounded in  $W^{1,s}(\Sigma)$ for any $1<s<2$ (see \cite{struL1} for a reference on open sets in $\R^2$). The weak limit $w$ of  $w_n$ will satisfy
\[
\int_{\Sigma} \nabla w\cdot \nabla \ph \; dv_g = \lambda \int_{\Sigma} |w|^{q-2} w \ph dv_g
\]
for any $\ph\in C^1(\Sigma)$ such that $\int_{\Sigma} \ph dv_g =0$.
But, since $\lambda<\lambda_q(\Sigma,g)$, this implies $w=0$ which contradicts $\|w_n\|_q=1$. Hence $\|m_n u_n\|_q \le C$.

This implies that $-\Delta_g (m_n u_n )$ is uniformly bounded in $L^1(\Sigma)$ and, as before, $m_n u_n$ is uniformly bounded in $W^{1,s}(\Sigma)$ for any $s\in (1,2)$. By Lemma \ref{mis} we have $m_n u_n\rw G_p^\lambda$ weakly in $W^{1,s}(\Sigma)$, $s\in (1,2)$ and strongly in $L^r$ for any $r\ge 1$.


From Lemma \ref{punto} we get $|\nabla u_n|^2 \rw \delta_p$ and $u_n$ is uniformly bounded in $L^\infty_{loc}(\Sigma\bs\{p\})$. This implies the boundedness of $-\Delta_g (m_n u_n)$ in $L^s_{loc}(\Sigma\bs\{p\})$ for some $s>1$ which gives a uniform bound for $m_n u_n$ in $W^{2,s}_{loc}(\Sigma\bs \{p\})$. Then, by elliptic estimates, we get $m_n u_n \ra G_p^\lambda$ in $H^1_{loc}(\Sigma\bs\{p\}) \cap C^0_{loc}(\Sigma\bs \{p\}).$
\end{proof}

As we did in the proof of Theorem \ref{teo 1}, in the next Proposition we will use an Onofri-type inequality (Corollary \ref{cor CC sing}) to control the energy on a small scale.

\begin{prop}\label{keystep}
We have $\alpha(p)=\ov\alpha$ and for any $L>0$
$$
\limsup_{n\to \infty}\int_{\Omega_{L r_n}(p_n)} h e^{b_n u_n^2} dv_g\le \frac{\pi K(p) e^{1+\ov{\beta} A^\lambda_p}}{1+\ov{\alpha}}.
$$
\end{prop}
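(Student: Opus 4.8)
The plan is to prove the estimate on the fixed ball $\Omega_\delta(p)$ and then let $\delta\to0$. Indeed, for $n$ large one has $\Omega_{Lr_n}(p_n)\subset\Omega_\delta(p)$, so by Lemma~\ref{contointegrali}
\[
\limsup_{n\to\infty}\frac{1}{\gamma_nm_n^2}=\lim_{L\to\infty}\lim_{n\to\infty}\int_{\Omega_{Lr_n}(p_n)}he^{b_nu_n^2}\,dv_g\le\limsup_{n\to\infty}\int_{\Omega_\delta(p)}he^{b_nu_n^2}\,dv_g ,
\]
so it suffices to bound the right hand side by $\tfrac{\pi K(p)e^{1+\ov{\beta}A_p^\lambda}}{1+\ov{\alpha}}\,\delta^{2(\alpha(p)-\ov{\alpha})}(1+o_\delta(1))$. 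I would fix a chart $(\Omega,\psi)$ at $p$ as in \eqref{exp Gr}, write $v_n:=u_n\circ\psi^{-1}$ on $D_\delta$, and use that $g_h$ pulls back to $|y|^{2\alpha(p)}V(y)e^{\ph(y)}|dy|^2$ with $V(0)=K(p)$. Two ingredients describe $u_n$ near $p$: from Lemma~\ref{conv Gr} and \eqref{exp Gr}, $m_nv_n\to G_p^\lambda$ uniformly on $\partial D_\delta$, hence $a_n:=\sup_{\partial D_\delta}v_n$ satisfies $m_na_n=-\tfrac1{2\pi}\log\delta+A_p^\lambda+O(\delta)+o_n(1)$; and from the $H^1_{loc}$ convergence of $m_nu_n$ together with Lemma~\ref{Grad Gr}, $\int_{D_\delta}|\nabla v_n|^2=1-\tfrac1{m_n^2}\big(-\tfrac1{2\pi}\log\delta+A_p^\lambda+\lambda\|G_p^\lambda\|_q^2\big)+O\!\big(\tfrac{\delta|\log\delta|}{m_n^2}\big)+o\!\big(\tfrac1{m_n^2}\big)$.

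The core of the argument is a singular Carleson--Chang estimate on $D_\delta$, in the spirit of Lemma~\ref{disketto}. Setting $\phi_n:=(v_n-a_n)^+$, for $n$ large $\phi_n\in H^1_0(D_\delta)$ (since $v_n\to0$ off $p$ while $a_n\to0$), so $\tau_n:=\|\nabla\phi_n\|_{L^2(D_\delta)}^2$ and $w_n:=\phi_n/\sqrt{\tau_n}$ give $\int_{D_\delta}|\nabla w_n|^2=1$, $w_n\rw0$, and $v_n=a_n+\sqrt{\tau_n}\,w_n$ on $\{v_n>a_n\}$, whence $b_nv_n^2=b_na_n^2+2b_na_n\sqrt{\tau_n}\,w_n+b_n\tau_nw_n^2$. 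I would estimate the quadratic term $e^{b_n\tau_nw_n^2}$ by the singular Carleson--Chang bound of Corollary~\ref{mi serve} (equivalently Theorem~\ref{teo 1}, scaled), applied on the shrinking balls $\Omega_{Lr_n}(p_n)\subset\Omega_\delta(p)$, and the remaining, essentially linear, term by the Onofri-type inequality of Corollary~\ref{cor CC sing}; here one uses that $p_n\in D_\delta$ and $\Omega_{Lr_n}(p_n)$ shrinks, and that $b_n-\ov{\beta}=O(m_n^{-2})$, $1-\tau_n=O(m_n^{-2})$ keep all exponents bounded. On the concentration region $w_n$ is of order $m_n$, so $a_n\sqrt{\tau_n}\,w_n=-\tfrac1{2\pi}\log\delta+A_p^\lambda+o_n(1)$: the cross term contributes the factor $e^{2\ov{\beta}A_p^\lambda}\delta^{-4(1+\ov{\alpha})}$, the Carleson--Chang/Onofri bounds contribute $\tfrac{\pi e}{1+\alpha(p)}K(p)\,\delta^{2(1+\alpha(p))}$, and the energy deficit from Lemma~\ref{Grad Gr} contributes the compensating factor $e^{-\ov{\beta}(1-\tau_n)w_n^2}\sim\delta^{2(1+\ov{\alpha})}e^{-\ov{\beta}A_p^\lambda}$. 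Collecting the powers of $\delta$ and of $e^{\ov{\beta}A_p^\lambda}$, and using $\alpha(p)\ge\ov{\alpha}$, yields exactly the bound announced in the first paragraph, the surviving $\delta^{2(\alpha(p)-\ov{\alpha})}$ being equal to $1$ precisely when $\alpha(p)=\ov{\alpha}$ and tending to $0$ (as $\delta\to0$) otherwise.

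Finally, I would let $\delta\to0$. If $\alpha(p)>\ov{\alpha}$ the above forces $\limsup_n\int_{\Omega_{Lr_n}(p_n)}he^{b_nu_n^2}\,dv_g=0$ for every $L$, hence $\limsup_n\tfrac1{\gamma_nm_n^2}=0$ by Lemma~\ref{contointegrali}; but $\limsup_nE_n(u_n)=\sup_{\mathcal H}E_{\Sigma,h}^{\ov{\beta},\lambda,q}>|\Sigma|_{g_h}$ (Lemma~\ref{conv sup}, together with $E^{\ov{\beta},\lambda,q}_{\Sigma,h}(u)>\int_\Sigma h\,dv_g=|\Sigma|_{g_h}$ for any non-zero $u\in\mathcal H$), and combining this with Lemma~\ref{supspezzato} and Lemma~\ref{contointegrali} gives $\limsup_n\tfrac1{\gamma_nm_n^2}\ge\sup_{\mathcal H}E_{\Sigma,h}^{\ov{\beta},\lambda,q}-|\Sigma|_{g_h}>0$, a contradiction; when $\ov{\alpha}<0$ the identity $\alpha(p)=\ov{\alpha}$ is anyway already contained in Proposition~\ref{rescaling} (cf.\ Remark~\ref{alfapalfabar}). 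Thus $\alpha(p)=\ov{\alpha}$, the factor $\delta^{2(\alpha(p)-\ov{\alpha})}$ disappears, and letting $\delta\to0$ gives $\limsup_n\int_{\Omega_{Lr_n}(p_n)}he^{b_nu_n^2}\,dv_g\le\tfrac{\pi K(p)e^{1+\ov{\beta}A_p^\lambda}}{1+\ov{\alpha}}$, as claimed. I expect the main difficulty to be the last estimate of the middle paragraph: because the Dirichlet energy of $u_n$ accumulates on the neck rather than on the core, a crude use of Moser--Trudinger or Onofri on $\Omega_\delta(p)$ loses a factor of the order $\delta^{-2(1+\ov{\alpha})}e^{\ov{\beta}A_p^\lambda}$, and one must exploit the precise value of the energy deficit $1-\tau_n$ (through Lemma~\ref{Grad Gr} and the expansion \eqref{exp Gr}) so that it cancels the boundary contribution down to the stated sharp constant.
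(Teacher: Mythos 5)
Your Paragraph~2 essentially reproduces the paper's argument: apply H\"older to the decomposition $b_nv_n^2=b_nd_n^2+2b_nd_n w_n+b_nw_n^2$ with $w_n=(v_n-d_n)^+$, control the quadratic factor by the shrinking-ball Carleson--Chang estimate (Corollary~\ref{mi serve}) and the linear factor by the singular Onofri inequality (Corollary~\ref{cor CC sing}), then use Lemma~\ref{Grad Gr}, \eqref{exp Gr} and Lemma~\ref{conv Gr} to compute $1-\tau_n$ and $d_n$ and make the powers of $\delta$ and of $e^{\ov\beta A_p^\lambda}$ cancel; the conclusion $\alpha(p)=\ov\alpha$ via Lemma~\ref{supspezzato} is also the paper's.

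There are, however, two related imprecisions you should fix. First, the reduction in Paragraph~1 --- ``it suffices to bound $\limsup_n\int_{\Omega_\delta(p)}he^{b_nu_n^2}$'' --- is not what Paragraph~2 actually delivers: H\"older plus Corollary~\ref{mi serve} applied on $\Omega_{Lr_n}(p_n)$ bounds the integral over $\Omega_{Lr_n}(p_n)$, not over the fixed ball $\Omega_\delta(p)$. If you really bounded $\int_{\Omega_\delta(p)}$, the Carleson--Chang ingredient would have to be Theorem~\ref{teo 1} on the full disk $D_\delta$, whose constant is $\pi(1+e)/(1+\ov\alpha)$, and the final bound would come out as $\frac{\pi(1+e)K(p)e^{\ov\beta A_p^\lambda}}{1+\ov\alpha}$, which is strictly weaker than the stated $\frac{\pi e K(p)e^{\ov\beta A_p^\lambda}}{1+\ov\alpha}$. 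Second, and for the same reason, the parenthetical ``(equivalently Theorem~\ref{teo 1}, scaled)'' is wrong: Corollary~\ref{mi serve} is genuinely sharper than Theorem~\ref{teo 1}, precisely because restricting to the shrinking balls $D_{Lr_n}(x_n)$ drops the extra additive $\pi$ (the ``background'' $\int_{D_\delta}|x|^{2\ov\alpha}dx$) that Theorem~\ref{teo 1} carries, and this is exactly what produces the factor $e$ rather than $1+e$. Since the H\"older exponent $\tau_n(1+\lambda\|u_n\|_q^2)\to1$ (note $\tau_n\to1$, not $0$), that factor survives in the limit and cannot be absorbed. So the statement to bound is really $\int_{\Omega_{Lr_n}(p_n)}$ from the start, with each H\"older factor then enlarged to $D_\delta$ only after the Carleson--Chang step on the shrinking ball has been performed --- which is what the paper does and what your Paragraph~2 in substance does.
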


\begin{proof}
Let us observe that 
\begin{equation}\label{cclo}
\begin{split}
\int_{D_{L r_n}(x_n)}|x|^{2\alpha(p)}e^{b_n v_n^2}\, dx &=\int_{D_{L r_n}(x_n)}|x|^{2(\alpha(p)-\ov\alpha)+2\ov\alpha}e^{b_n v_n^2}\, dx\\&
\leq \left(Lr_n \right)^{2(\alpha(p)-\ov\alpha)}\int_{D_{L r_n}(x_n)}|x|^{2\ov\alpha}e^{b_n v_n^2}\, dx.
\end{split}
\end{equation}
Fix $\delta>0 $ and set $\tau_n = \int_{\Omega_\delta}|\nabla u_n|^2 dv_g = \int_{D_\delta} |\nabla v_n |^2 dy$. Observe that, by Lemma \ref{conv Gr},
\begin{equation}\label{tau}
m_n^2(1-\tau_n) = \int_{\Sigma\bs \Omega_{\delta}}|\nabla G_p^\lambda|^2 dv_g +o(1),
\end{equation}
and 
\begin{equation}\label{normaq}
m_n^2 \|u_n\|_q^2 =   \|G_p^\lambda\|_{q}^2 +o(1).
\end{equation}
Since by Lemma \ref{Grad Gr} we have 
\begin{equation}\label{norm Gr}
\int_{\Sigma\bs \Omega_\delta} |\nabla G_p^\lambda|^2 dv_g = -\frac{1}{2\pi} \log \delta + O(1) \stackrel{\delta\to 0}{\ra}  +\infty,
\end{equation}
for $\delta$ sufficiently small, we obtain

\begin{equation}\label{tau e norma}
\begin{split}
\tau_n (1+\lambda \|u_n\|_{2}^2)&=\left(1-\frac{1}{m_n^2}\int_{\Sigma\setminus\Omega_\delta}|\nabla G_p^\lambda|^2 dv_g + o \left(\frac{1}{m_n^2}\right) \right)\left(1+\frac{\lambda}{m_n^2}\|G_p^\lambda\|_{q}^2 +o\left(\frac{1}{m_n^2}\right)\right)\\&
= 1-  \frac{1}{m_n^2}\left( \int_{\Sigma\setminus\Omega_\delta} |\nabla G_p^\lambda|^2 dv_g -\lambda \| G_p^\lambda\|_{q}^2\right) + o\left(\frac{1}{m_n^2}\right)<1.
\end{split}
\end{equation}
We denote $d_n:= \sup_{\partial D_{\delta}} v_n$ and $w_n := (v_n -d_n)^+ \in H^1_0(D_\delta)$. Applying Holder's inequality we have 
\begin{equation}\label{Hold}
\begin{split}
&\int_{D_{L_{r_n}}(x_n)} |x|^{2\ov{\alpha}}e^{b_n v_n^2} dx = e^{b_n d_n^2} \int_{D_{L_{r_n}}(x_n)} |x|^{2\ov{\alpha}} e^{b_n w_n^2+2b_nd_n w_n }dx  \\&
\le e^{b_n d_n^2}\left(\int_{D_{L r_n}(x_n)}|x|^{2\ov{\alpha}} e^{\beta_n\frac{w_n^2}{\tau_n}}  dx\right)^{\tau_n (1+\lambda \|u_n\|_q^2)}\left( \int_{D_{Lr_n}(x_n)} |x|^{2\ov{\alpha}}  e^{\frac{2 b_n w_n d_n }{1-\tau_n (1+\lambda \|u_n\|_q^2)}}\right)^{1-\tau_n (1+\lambda \|u_n\|_q^2)}.
\end{split}
\end{equation}
Observe that, for $n\to+\infty$, we have that $\frac{w_n}{\sqrt{\tau_n}}\ra 0$ uniformly on $D_{\delta}\bs D_{\delta'}$ for any $0<\delta'<\delta$.  Thus applying Corollary \ref{mi serve} to the function $\frac{w_n}{\sqrt{\tau_n}}$ with $\delta_n= L r_n$, we find
\begin{equation}\label{CCloc}
\limsup_{n\to \infty}\int_{D_{L r_n}(x_n)} |x|^{2\ov{\alpha}} e^{{\beta_n} \frac{w_n^2}{\tau_n}} dx \le \frac{\pi e}{1+\ov{\alpha}} \delta^{2(1+\ov\alpha)}.
\end{equation}

Using Corollary \ref{cor CC sing} we  find 
\[
\begin{split} 
\int_{D_{Lr_n(x_n)}} |x|^{2\ov{\alpha}}  e^{\frac{2b_n w_n d_n}{1-\tau_n (1+\lambda \|u_n\|_q^2)}}& \le \int_{D_{\delta}} |x|^{2\ov{\alpha}}  e^{\frac{2b_n w_n d_n}{1-\tau_n (1+\lambda \|u_n\|_q^2)}}dx\\& 
\le \frac{\pi e^{1+\frac{4 b_n^2 d_n^2 \tau_n }{16\pi (1+\ov{\alpha}){(1-\tau_n (1+\lambda \|u_n\|_q^2)^2}}}}{1+\ov\alpha}  \delta^{2(1+\ov\alpha)}\\&
\le \frac{\pi e^{1+\frac{ b_n d_n^2 \tau_n (1+\lambda\|u_n\|_q^2) }{(1-\tau_n (1+\lambda\|u_n\|_q^2)^2}}}{1+\ov\alpha}  \delta^{2(1+\ov\alpha)}.
\end{split}
\]
Combining this with \eqref{cclo}, \eqref{Hold} and \eqref{CCloc}, we find 
\begin{equation}\label{qu}
\limsup_{n\to \infty} \int_{D_{L_{r_n}(x_n)}} |x|^{2\alpha(p)}e^{b_n v_n^2} dx \le  \frac{\pi e\delta^{2(1+\ov\alpha)}}{1+\ov\alpha}\limsup_{n\to \infty} \left(Lr_n\right)^{2(\alpha(p)-\ov\alpha)} e^{\frac{ b_n d_n^2 }{1-\tau_n (1+\lambda\|u_n\|_q^2)}}.
\end{equation}
Using   \eqref{tau e norma} and Lemma \ref{conv Gr},
\begin{equation}\label{lim Gr}
\lim_{n\to \infty} \frac{ b_n d_n^2 }{1-\tau_n (1+\lambda\|u_n\|_q^2)} =  \frac{\ov{\beta} (\sup_{\partial B_\delta} G_p^\lambda)^2}{\left( \int_{\Sigma\setminus\Omega_\delta} |\nabla G_p^\lambda|^2 dv_g -\lambda \| G_p^\lambda\|_{q}^2\right) } =: H(\delta).
\end{equation}

Notice that by Lemma \ref{Grad Gr} and \eqref{exp Gr} we find 
\begin{equation}\label{accadelta}
H(\delta) = -2(1+\ov\alpha) \log \delta + \ov{\beta} A^\lambda_p +o_\delta(1).
\end{equation}

From \eqref{qu}, \eqref{lim Gr} we obtain
\begin{equation}\label{dai dai}
\begin{split}
\limsup_{n\to+\infty}\int_{\Omega_{Lr_n}(p_n)}he^{b_n u_n^2}\, dv_g&=\limsup_{n\to \infty} \int_{D_{L_{r_n}}(x_n)} V(x) |x|^{2\alpha(p)}e^{b_n v_n^2} dx \\&
\le  \frac{K(p) \pi e\delta^{2(1+\ov\alpha)}}{1+\ov\alpha} e^{H(\delta)} \limsup_{n\to+\infty}\left(Lr_n\right)^{2(\alpha(p)-\ov\alpha)}.
\end{split}
\end{equation}
If $\alpha(p)>\ov\alpha$ we would have $(Lr_n)^{2(\alpha(p)-\ov\alpha)}\to0$ as $n\to+\infty$. This would imply, using Lemma \ref{supspezzato}, that 
$$ \limsup_{n\to +\infty} E_n(u_n)\leq |\Sigma_{g_h}|,$$
which is a contradiction since $u_n$ is a maximizing sequence. Hence necessary we have $\alpha(p)=\ov\alpha$. Therefore combining \eqref{lim Gr}, \eqref{accadelta} and \eqref{dai dai} we get

\[
\limsup_{n\to+\infty}\int_{\Omega_{Lr_n}(p_n)}he^{b_n u_n^2}\, dv_g
\le  \frac{K(p) \pi e\delta^{2(1+\ov\alpha)}}{1+\ov\alpha} e^{H(\delta)}
=  \frac{K(p)\pi e^{1+\ov{\beta}A^\lambda_p +o_\delta(1)}}{1+\ov\alpha}.
\]

\end{proof}

\begin{proof}[Proof of Proposition \ref{finale}]
The proof follows at once from Lemma \ref{supspezzato} and Proposition \ref{keystep}.
\end{proof}


\section{Test Functions and Existence of Extremals.}\label{sez6}
By Proposition \ref{finale},  in order to prove existence of extremals for  $E^{\ov{\beta},\lambda,q}_{\Sigma,h}$ it   suffices  to show that the value 
$$
\frac{\pi e}{1+\ov\alpha}  \max_{p\in \Sigma, \; \alpha(p)=\ov{\alpha}} K(p)e^{\ov\beta A_p^\lambda} + |\Sigma|_{g_h}.
$$
is exceeded. In this section we will show that this is indeed the case if $\lambda$ is small enough.

\begin{prop}
There exists $\lambda_0>0$ such that $\forall$ $0\le \lambda< \lambda_0$ one has 
$$
\sup_{u\in \mathcal{H}}  E_{\Sigma,h}^{\ov\beta, \lambda q} > \frac{\pi e}{1+\ov\alpha}  \max_{p\in \Sigma,\; \alpha(p)=\ov{\alpha}} K(p)e^{\ov{\beta}A_p^\lambda} + |\Sigma|_{g_h}
$$
\end{prop}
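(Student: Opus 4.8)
The plan is to run the concentrating test function argument of Carleson--Chang and Flucher, adapted to the singular weight and the surface (cf. \cite{Flu}, \cite{Li1}, \cite{Csa}, \cite{LuYang}, \cite{Yang}). Fix $\lambda$ small (to be constrained at the end) and, using Remark \ref{ossmax}, pick a point $p^\ast$ with $\alpha(p^\ast)=\ov{\alpha}$ at which $K(p)e^{\ov{\beta}A^\lambda_p}$ attains its maximum over $\{\alpha(p)=\ov{\alpha}\}$. Take an isothermal chart $(\Omega,\psi)$ at $p^\ast$ as in \eqref{psi1}--\eqref{psi2}, so $\psi(p^\ast)=0$, $(\psi^{-1})^\ast g=e^{\ph}|dx|^2$ with $\ph(0)=0$, and $(\psi^{-1})^\ast g_h=|x|^{2\ov{\alpha}}V(x)e^{\ph}|dx|^2$ with $V(0)=K(p^\ast)$.

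For $\eps>0$ small I would define $\phi_\eps$ on $\Sigma$ by gluing, across $\partial\Omega_{L\eps}(p^\ast)$, the rescaled singular Liouville bubble
$$
\phi_\eps=c_\eps+\frac1{c_\eps}\Big(-\frac1{\ov{\beta}}\log\big(1+\kappa\,|x|^{2(1+\ov{\alpha})}\eps^{-2(1+\ov{\alpha})}\big)+B_\eps\Big)\qquad\text{in }\Omega_{L\eps}(p^\ast)
$$
(read in the chart) to $\frac1{c_\eps}G^\lambda_{p^\ast}$ on $\Sigma\setminus\Omega_{L\eps}(p^\ast)$, where $\kappa=\kappa(\ov{\alpha},K(p^\ast))>0$ makes the inner profile an affine image of a solution of $-\Delta\eta=|x|^{2\ov{\alpha}}V(0)e^{2\ov{\beta}\eta}$ — the equality case behind Corollary \ref{cor CC sing}, and the source of the factor $e$ — while $B_\eps=O(1)$ is fixed by continuity of $\phi_\eps$, which by \eqref{exp Gr} forces $c_\eps^2=-\frac1{2\pi}\log\eps+A^\lambda_{p^\ast}+O(1)$; here $L=L_\eps\to\infty$ slowly. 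Then I subtract the mean $\frac1{|\Sigma|}\int_\Sigma\phi_\eps\,dv_g=O(c_\eps^{-2})$ and renormalize so that $\int_\Sigma|\nabla\phi_\eps|^2dv_g=1$, hence $\phi_\eps\in\mathcal H$; since the inner bubble carries Dirichlet energy $O(c_\eps^{-2})$, this normalization combined with Lemma \ref{Grad Gr} applied to $\int_{\Sigma\setminus\Omega_{L\eps}(p^\ast)}|\nabla G^\lambda_{p^\ast}|^2$ keeps the above asymptotics of $c_\eps$, gives $\|\phi_\eps\|_q^2=c_\eps^{-2}\|G^\lambda_{p^\ast}\|_q^2+o(c_\eps^{-2})$, and so $c_\eps^2(1+\lambda\|\phi_\eps\|_q^2)=c_\eps^2+\lambda\|G^\lambda_{p^\ast}\|_q^2+o(1)$.

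The core computation is the expansion of $E^{\ov{\beta},\lambda,q}_{\Sigma,h}(\phi_\eps)$ as $\eps\to0$, split at $\partial\Omega_{L\eps}(p^\ast)$. Outside $\Omega_\delta(p^\ast)$ ($\delta$ fixed small) $\phi_\eps\to0$ uniformly, contributing $|\Sigma|_{g_h}-\int_{\Omega_\delta}h\,dv_g+o(1)$; on the annulus $\Omega_\delta(p^\ast)\setminus\Omega_{L\eps}(p^\ast)$ a direct Laplace-type estimate using \eqref{exp Gr} shows the contribution is $\int_{\Omega_\delta}h\,dv_g+o_\delta(1)+o_\eps(1)$ (no $e$ appears here); on $\Omega_{L\eps}(p^\ast)$ the change of variables $x=\eps y$, the identity $\phi_\eps^2=c_\eps^2+2(\text{profile})+(\text{profile})^2/c_\eps^2$, the asymptotics of $c_\eps^2$ (which cancels $\eps^{2(1+\ov{\alpha})}$ against $e^{\ov{\beta}c_\eps^2}$ and cancels the $\lambda\|G^\lambda_{p^\ast}\|_q^2$ terms), and $V(0)\int_{\R^2}|y|^{2\ov{\alpha}}(1+\kappa|y|^{2(1+\ov{\alpha})})^{-2}dy=\frac{\pi}{1+\ov{\alpha}}$ give $\frac{\pi e}{1+\ov{\alpha}}K(p^\ast)e^{\ov{\beta}A^\lambda_{p^\ast}}$ plus a strictly positive correction of order $c_\eps^{-2}$ produced by the quadratic term $(\text{profile})^2/c_\eps^2$ in $e^{\ov{\beta}\phi_\eps^2}$. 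Choosing $\delta=\delta_\eps\to0$ and $L=L_\eps\to\infty$ so that all $o_\delta(1)$, gluing and mean-shift errors stay below this positive $c_\eps^{-2}$ gain, one obtains $E^{\ov{\beta},\lambda,q}_{\Sigma,h}(\phi_\eps)>\frac{\pi e}{1+\ov{\alpha}}K(p^\ast)e^{\ov{\beta}A^\lambda_{p^\ast}}+|\Sigma|_{g_h}=\frac{\pi e}{1+\ov{\alpha}}\max_{\alpha(p)=\ov{\alpha}}K(p)e^{\ov{\beta}A^\lambda_p}+|\Sigma|_{g_h}$ for $\eps$ small. Finally, by Lemma \ref{as0}, $A^\lambda_p\to A^0_p$ uniformly in $p$ and $\|G^\lambda_p\|_q\le C$ as $\lambda\to0$, so all constants above are uniform for $\lambda$ in some $[0,\lambda_0)$, $\lambda_0\le\lambda_q(\Sigma,g)$, which is the claim.

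The main obstacle I anticipate is precisely this last bookkeeping: after the correct choice of $\kappa$ (so the leading inner term equals \emph{exactly} $\frac{\pi e}{1+\ov{\alpha}}K(p^\ast)e^{\ov{\beta}A^\lambda_{p^\ast}}$) one must show that the surviving $c_\eps^{-2}$-order coefficient is strictly positive and dominates the combined effect of the annulus remainder, the $O(|x|)$ tail in \eqref{exp Gr}, the gluing error at $\partial\Omega_{L\eps}(p^\ast)$, and the mean-zero shift, for a good choice of the scales $\delta_\eps$, $L_\eps$. The remaining ingredients (subcriticality/uniform convergence on the outer region, the Liouville integral identity, and the control of $G^\lambda_p$ via Lemmas \ref{as0} and \ref{Grad Gr}) are routine.
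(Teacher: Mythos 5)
Your test function and normalization are essentially the same as the paper's (the bubble you describe, after choosing $\kappa$ to match the singular Liouville profile and $L_\eps$ to grow like a power of $|\log\eps|$, is exactly the $w_\eps$ of \eqref{definizionewepsilon}, and your renormalization to unit Dirichlet energy is the division by $\sqrt{1+\lambda\|G_p^\lambda\|_q^2/c_\eps^2}$). Where you genuinely diverge from the paper is in where the strictly positive $c_\eps^{-2}$ gain is harvested. You propose to keep the quadratic term $(\text{profile})^2/c_\eps^2$ in the expansion of $\phi_\eps^2$ inside $\Omega_{L\eps}$ and integrate it against the bubble weight, which indeed produces a finite, strictly positive constant times $c_\eps^{-2}$ (the relevant integral $\int_{\R^2}|y|^{2\ov\alpha}(1+\kappa|y|^{2(1+\ov\alpha)})^{-2}\big(\log(1+\kappa|y|^{2(1+\ov\alpha)})+O(1)\big)^2\,dy$ converges for $\ov\alpha>-1$). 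The paper instead \emph{drops} this inner quadratic term --- it only uses the lower bound $\ov\beta(w_\eps-\ov w_\eps)^2\ge\ov\beta c_\eps^2-2L_\eps-2\log(1+(|\psi(x)|/\eps)^{2(1+\ov\alpha)})+o(c_\eps^{-2})$ --- and extracts the positive gain from the \emph{outer} region via $e^x\ge1+x$, which yields the term $\ov\beta\|G_p^\lambda\|_{L^2(\Sigma,g_h)}^2/c_\eps^2$ in \eqref{omegaepseps2}. Both sources of positivity are legitimate and of the same $c_\eps^{-2}$ order; the paper's choice makes the constant transparent ($\|G_p^0\|_{L^2(\Sigma,g_h)}^2>0$ via Lemma \ref{as0}) and avoids the weighted Liouville integral.

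Two points you should tighten. First, your claim that the $c_\eps$-asymptotics ``cancels the $\lambda\|G_{p^\ast}^\lambda\|_q^2$ terms'' is only true at linear order: after the renormalization there is an unavoidable residual $-\ov\beta\lambda^2\|G_p^\lambda\|_q^4/c_\eps^2$ in the exponent inside the bubble (this is the bound \eqref{aiutomaggiore} multiplied by $\ov\beta w_\eps^2\approx\ov\beta c_\eps^2$), which is a \emph{negative} $c_\eps^{-2}$ contribution. Your positive inner gain must be compared against it, and the comparison succeeds precisely because the negative term carries a factor $\lambda^2$ --- this is where the smallness restriction on $\lambda$ enters, and it should be stated explicitly rather than swept into the bookkeeping. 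Second, your plan delegates the whole comparison (positive gain vs.\ gluing/tail/mean errors) to ``a good choice of $\delta_\eps$, $L_\eps$'' without doing it; with $L_\eps\sim|\log\eps|^{1/(1+\ov\alpha)}$ the error terms are $O(L_\eps\eps|\log(L_\eps\eps)|)=o(|\log\eps|^{-2})=o(c_\eps^{-2})$, so the comparison does close, but this needs to be checked rather than asserted, as it is the core of the proof.
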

\begin{proof}
Let $p\in\Sigma$ be such that $\alpha(p)=\ov\alpha$ and
\[
K(p)e^{\ov{\beta}A^\lambda_p}=\max_{q\in\Sigma,\; \alpha(q)=\ov\alpha}K(q)e^{\ov{\beta}A^\lambda_q}.
\]
In local coordinates $(\Omega,\psi)$ satisfying \eqref{psi1}-\eqref{psi2} we define
\begin{equation}\label{definizionewepsilon}
w_\eps(x):=\begin{Si}{ll}
c_\eps -\frac{\log\left(1+\left(\frac{|\psi(x)|}{\eps}\right)^{2(1+\ov\alpha)}\right)+L_\eps}{\ov{\beta} c_\eps}\quad\; &  x\in \Omega_{\gamma_\eps \eps} \\
\frac{G_p^\lambda- \eta_\eps \xi }{c_\eps} & x\in \Omega_{2\gamma_\eps \eps}\bs \Omega_{\gamma_\eps \eps} \\\
\frac{G_p^\lambda}{c_\eps} & x\in \Sigma \bs \Omega_{2 \gamma_\eps \eps}
\end{Si}
\end{equation}
and 
$$
u_\eps:= \frac{w_\eps}{\sqrt{1+\frac{\lambda}{c_\eps^2}\|G_p^\lambda\|_q^2 }}
$$
where $c_\eps,L_\eps$ will be chosen later, $\gamma_\eps = |\log\eps|^\frac{1}{1+\ov\alpha}$, $\xi$ is defined as in \eqref{exp Gr} and $\eta_\eps$ is a cut-off function such that $\eta_\eps \equiv 1$ in $\Omega_{\gamma_\eps \eps}$, $\eta_\eps \in C^\infty_0(\Omega_{2\gamma_\eps \eps})$ and $\|\nabla \eta_\eps \|_{L^\infty(\Sigma)}= O(\frac{1}{\gamma_\eps \eps})$. In order to have $u_\eps \in H^1(\Sigma)$ we choose $L_\eps$ so that 
\begin{equation}\label{conti}
\ov{\beta} c^2_\eps-L_\eps =\log\left(\frac{1+\gamma_\eps^{2(1+\ov\alpha)}}{\gamma_\eps^{2(1+\ov\alpha)}}\right) +\ov{\beta} A_p^\lambda-2(1+\ov\alpha)\log \eps.
\end{equation}
Observe that
\begin{equation}
\int_{\Omega_{\gamma_\eps \eps}}|\nabla w_\eps|^2 dv_g = \frac{1}{\ov{\beta}c_\eps^2}\left(\log(1+\gamma_\eps^{2(1+\ov\alpha)})-1 +O(|\log \eps|^{-2})\right).
\end{equation}
Since $\xi\in C^1(\ov{D_{\delta_0}})$ and $\xi(x)= O(|x|)$ we have
\[
\begin{split}
&\int_{\Omega_{2\gamma_\eps \eps}\bs \Omega_{\gamma_\eps \eps}} |\nabla (\eta_\eps \xi )|^2 \, dv_g =  \int_{\Omega_{2\gamma_\eps \eps}\bs \Omega_{\gamma_\eps \eps}} |\nabla \eta_\eps|^2 \xi^2\, dv_g \\&
+ \int_{\Omega_{2\gamma_\eps \eps}\bs \Omega_{\gamma_\eps \eps}} |\nabla \xi|^2 \eta_\eps ^2\, dv_g + 2\int_{\Omega_{2\gamma_\eps \eps}\bs \Omega_{\gamma_\eps \eps}} \eta_\eps \xi  \nabla \eta_\eps \cdot\nabla \xi\, dv_g\\&
= O((\gamma_\eps \eps)^2),
\end{split}
\]
and similarly
$$
\int_{\Omega_{2\gamma_\eps \eps}\bs \Omega_{\gamma_\eps \eps}} \nabla G_p^\lambda \cdot \nabla (\eta_\eps \xi ) dv_g =O(\gamma_\eps \eps ).
$$
By Lemma \ref{Grad Gr} we have
$$
c_\eps^2\int_{\Sigma\bs \Omega_{\gamma_\eps \eps}}|\nabla w_\eps|^2 dv_g = \int_{\Sigma\bs \Omega_{\gamma_\eps \eps}}|\nabla G_p^\lambda|^2 +O(\gamma_\eps \eps) = $$
$$
= -\frac{1}{2\pi} \log{\gamma_\eps \eps } +A_p^\lambda +\lambda\|G^\lambda_p\|^2_q+O(\gamma_\eps \eps|\log(\gamma_\eps \eps)|).
$$
Observe that $\gamma_\eps \eps \log(\gamma_\eps \eps)=o(|\log \eps|^{-2})$,
therefore we get 
$$
\int_{\Sigma} |\nabla w_\eps|^2 dv_g  =  \frac{1}{\ov{\beta} c_\eps^2 } \left(   -1-2(1+\ov\alpha)\log \eps + \ov{\beta }A_p^\lambda +\ov{\beta}\lambda\|G_p^\lambda\|_q^2 +O(|\log \eps|^{-2})\right).
$$
If we chose $c_\eps$ so that
\begin{equation}\label{conti2}
\ov{\beta} c_\eps^2 =  -1-2(1+\ov\alpha)\log \eps + \ov{\beta }A_p^\lambda  +O(|\log \eps|^{-2}),
\end{equation}
then $u_\eps - \ov{u}_\eps \in  \mathcal{H}$. Observe also that  \eqref{conti}, \eqref{conti2} yield
\begin{equation}\label{conti 3}
L_\eps= -1 + O(|\log\eps|^{-2}).
\end{equation}
and 
\begin{equation}\label{conti 4}
2\pi c_\eps^2 = |\log \eps|+O(1).
\end{equation}
Since $0\le w_\eps \le O(c_\eps) $ in $\Omega_{\gamma_\eps \eps}$ we get
$$
\int_{\Omega_{\gamma_\eps \eps}} w_\eps  dv_g = O(c_\eps (\gamma_\eps \eps)^2) = o(|\log \eps|^{-2}).
$$ 
Moreover 
\[
\begin{split}
\int_{\Sigma\bs \Omega_{\gamma_\eps \eps}} w_\eps\,  dv_g &=   \int_{\Sigma\bs \Omega_{\gamma_\eps \eps}} \frac{G_p^\lambda}{c_\eps}\,  dv_g - \int_{\Omega_{2\gamma_\eps \eps} \bs \Omega_{\gamma_\eps \eps}} \frac{\eta_\eps \xi }{c_\eps } \, dv_g \\&
= O\left(\frac{ (\gamma_\eps \eps)^2 |\log(\gamma_\eps \eps)|}{c_\eps}\right) +O\left(\frac{(\gamma_\eps \eps)^3}{c_\eps}\right)\\&
 = o(|\log\eps|^{-2})
\end{split}
\]
therefore 
\begin{equation}\label{omegabar}
\ov{w}_\eps =o(|\log \eps|^{-2})= o(c_\eps^{-4}).
\end{equation}
From \eqref{conti2}, \eqref{conti 3} and \eqref{omegabar} it follows that in $\Omega_{\gamma_\eps \eps}$ we have
$$
\ov{\beta}(w_\eps- \ov{w}_\eps)^2 \ge  \ov\beta c_\eps^2 - 2 L_\eps - 2 \log\left(1+\left(\frac{|\psi(x)|}{\eps}\right)^{2(1+\ov\alpha)}\right)  +o(c_\eps^{-2}).
$$
We have
\[
c_\eps^2 \|w_\eps-\ov{w}_\eps \|^2_q  \ge \left( \int_{\Sigma\bs \Omega_{2\gamma_\eps \eps} }|G_p^\lambda -c_\eps\ov{w}_\eps|^q \, dv_g \right)^\frac{2}{q} \geq \|G_p^\lambda\|_q^2 +o(c_\eps^{-2}),
\]
where the last inequality follows from \eqref{definizionewepsilon} and Bernoulli's inequality after splitting the integral on regions where $|G_p^\lambda|\geq |c_\eps \ov{w}_\eps|$ and $|G_p^\lambda|\leq |c_\eps \ov{w}_\eps|$.
Therefore we find 
\begin{equation}\label{aiutomaggiore}
\begin{split}
&\frac{1}{1+\frac{\lambda}{c_\eps^2}\|G_p^\lambda\|_q^2 }  \left( 1 + \frac{\lambda\|w_\eps-\ov w_\eps \|_q^2}{1+\frac{\lambda}{c_\eps^2}\|G_p^\lambda\|_q^2}\right)\ge \frac{1+2\frac{\lambda}{c_\eps^2}\|G_p^\lambda\|_q^2+o(c_\eps^{-4})}{\left(1+\frac{\lambda}{c_\eps^2}\|G_p^\lambda\|_q^2\right)^2} \\&
= 1- \frac{\lambda^2 \|G_p^\lambda\|_q^4}{c_\eps^4} +o(c_\eps^{-4}).
\end{split}
\end{equation}
Therefore 
$$
\ov{\beta}(u_\eps- \ov{u}_\eps)^2 (1+\lambda\|u_\eps-\ov u_\eps\|_q^2)\ge   \ov\beta c_\eps^2 - 2 L_\eps - 2 \log\left(1+\left(\frac{|\psi(x)|}{\eps}\right)^{2(1+\ov\alpha)}\right) - \frac{\ov{\beta}\lambda^2 \|G_p^\lambda\|_q^4}{c_\eps^2} +o(c_\eps^{-2}).
$$

It follows that
\[
\begin{split}
&\int_{\Omega_{\gamma_\eps \eps}} h e^{\ov{\beta} (u_\eps-\ov{u}_\eps )^2(1+\lambda\|u_\eps-\ov{u}_\eps\|^2_q)} dv_g\\& 
\ge \int_{D_{\gamma_\eps \eps }}|x|^{2\ov\alpha} (V(0)+O(\gamma_\eps \eps ))\frac{ e^{\ov{\beta} c_\eps^2-2 L_\eps - \frac{\ov{\beta}\lambda^2 \|G_p^\lambda\|_q^4}{c_\eps^2} +o(c_\eps^{-2})}}{\left(1+\left(\frac{|x|}{\eps}\right)^{2(1+\ov\alpha)}
\right)^2} dx\\&
=\frac{\pi V(0) \eps^{2(1+\ov\alpha)} \gamma_\eps^{2(1+\ov\alpha)}}{(1+\ov\alpha)(1+\gamma_\eps^{2(1+\ov\alpha)})} e^{\ov{\beta} c_\eps^2 -2 L_\eps- \frac{\ov{\beta}\lambda^2 \|G_p^\lambda\|_q^4}{c_\eps^2} +o(c_\eps^{-2})} (1+O(\gamma_\eps \eps))\\&
= \frac{\pi V(0) \eps^{2(1+\ov\alpha)}}{(1+\ov\alpha)} e^{\ov{\beta} c_\eps^2 -2 L_\eps- \frac{\ov{\beta}\lambda^2 \|G_p^\lambda\|_q^4}{c_\eps^2} +o(c_\eps^{-2})} (1+O(c_\eps^{-4})).
\end{split}
\]
Using \eqref{conti2} and  \eqref{conti 3} we find 
\[
\ov{\beta}c_\eps^2 -2L_\eps=-2(1+\ov\alpha) \log\eps + 1+\ov{\beta} A_p^\lambda +O(c_\eps^{-4})
\]
so that 
\begin{equation}\label{omegaepseps}
\int_{\Omega_{\gamma_\eps \eps}} h e^{\ov{\beta} (u_\eps-\ov{u}_\eps )^2(1+\lambda\|u_\eps-\ov{u}_\eps\|^2_q)} =  \frac{\pi V(0) e^{1+\ov{\beta}A_p^\lambda }}{(1+\ov\alpha)} \left(1-\frac{\ov{\beta}\lambda^2 \|G_p^\lambda\|_q^4}{c_\eps^2} +o(c_\eps^{-2})  \right).
\end{equation}
Finally, with \eqref{omegabar} and \eqref{aiutomaggiore}, we observe that 
\begin{equation}\label{omegaepseps2}
\begin{split}
&\int_{\Sigma\bs \Omega_{2\gamma_\eps \eps}}  h e^{\ov{\beta} (u_\eps-\ov{u}_\eps)^2(1+\lambda\| u_\eps-\ov{u}_\eps\|_q^2)} dv_g \\&
\ge \int_{\Sigma\bs \Omega_{2\gamma_\eps \eps}} h \, dv_g +  \ov{\beta}(1+\lambda\| u_\eps-\ov{u}_\eps\|_q^2)\int_{\Sigma\bs \Omega_{2\gamma_\eps \eps }} h (u_\eps-\ov{u}_\eps)^2 \, dv_g\\&
\geq |\Sigma|_{g_h} +O((\gamma_\eps \eps)^{2(1+\ov\alpha)}) +\ov{\beta}\left(1-\frac{\lambda^2\| G^\lambda_p\|^4_q}{c_\eps^4}+o(c_\eps^{-4})\right)\int_{\Sigma\bs \Omega_{2\gamma_\eps \eps}} h (w_\eps-\ov{w}_\eps)^2 dv_g\\&
= |\Sigma|_{g_h} + \frac{\ov{\beta}\|G_p^\lambda\|_{L^2(\Sigma,g_h)}}{c_\eps^2} +o(c_\eps^{-2}).
\end{split}
\end{equation}
Hence from \eqref{omegaepseps} and \eqref{omegaepseps2} it follows that
\[
E^{\ov{\beta}, \lambda, q}_{\Sigma, h}(u_\eps-\ov{u}_\eps) \ge \frac{\pi K(p)}{1+\ov\alpha}e^{1+\ov{\beta} A_p^\lambda} + |\Sigma|_{g_h} +\frac{\ov{\beta}}{c_\eps^2} \left(\|G_p^\lambda\|_{L^2(\Sigma,g_h)} - \frac{\pi K(p)e^{1+\ov{\beta}A^\lambda_p}\lambda^2 \|G_p^\lambda\|_q^4}{1+\ov\alpha}\right) +o(c_\eps^{-2}),
\]
where we used that by definition $K(p)=V(0)$.
By Lemma \ref{as0}, we know that 
$$
\left(\|G_p^\lambda\|_{L^2(\Sigma,g_h)} - \frac{\pi K(p)e^{1+\ov{\beta}A^\lambda_p}\lambda^2 \|G_p^\lambda\|_q^4}{1+\ov\alpha}\right) \ra \|G_p^0\|_{L^2(\Sigma,g_h)}>0
$$
as $\lambda\to 0$ thus for sufficiently small $\lambda$ we get the conclusion. 
\end{proof}

To finish the proof of Theorem \ref{teo 3} we have to treat the case $\lambda>\lambda_{q}(\Sigma,g)$.   Will use a family of test functions similar to the one used in \cite{LuYang}. 
\begin{lemma}\label{infinito}
If $\beta > \ov{\beta}$ or $\beta= \ov{\beta}$ and $\lambda>\lambda_q(\Sigma,g)$, we have 
$$
\sup_{\mathcal{H}} E^{\ov\beta,\lambda,q}_{\Sigma,h} = +\infty. 
$$
\end{lemma}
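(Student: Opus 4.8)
\textbf{Proof strategy for Lemma \ref{infinito}.}

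The plan is to treat the two regimes separately. If $\beta>\ov\beta$ the claim is immediate: since $\lambda\ge 0$ we have $E^{\beta,\lambda,q}_{\Sigma,h}(u)\ge E^{\beta,0,q}_{\Sigma,h}(u)$ for every $u\in\mathcal{H}$, hence $\sup_{\mathcal{H}}E^{\beta,\lambda,q}_{\Sigma,h}\ge\sup_{\mathcal{H}}E^{\beta,0,q}_{\Sigma,h}=+\infty$ by the sharpness part of \eqref{MT Troy}. So the substance is the critical case $\beta=\ov\beta$ with $\lambda>\lambda_q(\Sigma,g)$. Here a concentrating Moser-type function alone does not work: it converges weakly to $0$, so its $L^q$ norm vanishes and the $\lambda$-term in the exponent is lost. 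Following \cite{LuYang}, the idea is to superimpose on the bubble a fixed, low-energy function whose $L^q$ norm is large compared with its Dirichlet energy; the strict surplus of $\lambda$ over $\lambda_q$ makes this exchange profitable and raises the effective exponent on the bubble strictly above $\ov\beta$, putting us in the ``supercritical'' regime locally.

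Concretely, I would fix $p\in\Sigma$ with $\alpha(p)=\ov\alpha$, a local chart $(\Omega,\psi)$ as in \eqref{psi1}--\eqref{psi2}, and a small $\rho>0$. Since $\lambda>\lambda_q(\Sigma,g)$ one can pick $\phi\in H^1(\Sigma)$ with $\int_\Sigma\phi\,dv_g=0$, $\supp\phi\cap\ov{\Omega_\rho}=\0$, and $\|\nabla\phi\|_2^2<\lambda\|\phi\|_q^2$: start from a function realizing a Rayleigh quotient strictly below $\lambda$, multiply by a cut-off vanishing near $p$ (harmless, since a point has zero $H^1$-capacity in dimension two), and restore the zero average by subtracting a fixed bump supported away from $p$. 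For small $\eps>0$ let $\psi_\eps\in H^1_0(\Omega_\rho)$ be the function which, in the coordinates $\psi$, equals the standard Moser profile ($=C_\eps$ on $D_\eps$, log-linear on $D_\rho\bs D_\eps$, $0$ outside $D_\rho$), normalised so that $\|\nabla\psi_\eps\|_2^2=1$; then $C_\eps^2=\tfrac{1}{2\pi}|\log\eps|\,(1+o(1))$, $\psi_\eps\rw 0$ in $H^1(\Sigma)$, $\|\psi_\eps\|_q\to 0$ and $\ov{\psi_\eps}\to 0$. Fix a small parameter $a>0$, put $b:=(1-a^2\|\nabla\phi\|_2^2)^{1/2}$, $v_\eps:=a\phi+b\psi_\eps$, and $u_\eps:=v_\eps-\ov{v_\eps}$. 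Since $\phi$ and $\psi_\eps$ have disjoint supports, $\|\nabla u_\eps\|_2^2=a^2\|\nabla\phi\|_2^2+b^2=1$ and $\int_\Sigma u_\eps\,dv_g=0$, so $u_\eps\in\mathcal{H}$.

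The computation then runs as follows. On $\Omega_\rho$ one has $v_\eps=b\psi_\eps$ and $\ov{v_\eps}=b\,\ov{\psi_\eps}\to 0$, so on the core $\Omega_\eps$ (where $\psi_\eps\equiv C_\eps\to+\infty$) we get $u_\eps^2=b^2\psi_\eps^2(1-o(1))$ uniformly, while $\|u_\eps\|_q^2\to a^2\|\phi\|_q^2$ because $\|\psi_\eps\|_q\to 0$. Hence on $\Omega_\eps$,
\[
\ov\beta\,u_\eps^2\bigl(1+\lambda\|u_\eps\|_q^2\bigr)\ge \ov\beta\,\psi_\eps^2\cdot b^2\bigl(1+\lambda\|u_\eps\|_q^2\bigr)(1-o(1)),\qquad
b^2\bigl(1+\lambda a^2\|\phi\|_q^2\bigr)=1+a^2\bigl(\lambda\|\phi\|_q^2-\|\nabla\phi\|_2^2\bigr)+O(a^4).
\]
By the choice of $\phi$ the bracket is positive, so for $a$ small and fixed there is $\delta>0$ with $b^2(1+\lambda a^2\|\phi\|_q^2)\ge 1+2\delta$; therefore $\ov\beta\,u_\eps^2(1+\lambda\|u_\eps\|_q^2)\ge\ov\beta(1+\delta)\psi_\eps^2$ on $\Omega_\eps$ for $\eps$ small. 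Using \eqref{KK} and \eqref{pullb}--\eqref{psi2} one has $h\,dv_g\ge c_0|x|^{2\ov\alpha}\,dx$ near $0$ (with $c_0>0$, after shrinking $\rho$), so
\[
E^{\ov\beta,\lambda,q}_{\Sigma,h}(u_\eps)\ \ge\ \int_{\Omega_\eps}h\,e^{\ov\beta\,u_\eps^2(1+\lambda\|u_\eps\|_q^2)}\,dv_g\ \ge\ c_0\int_{D_\eps}|x|^{2\ov\alpha}e^{\ov\beta(1+\delta)C_\eps^2}\,dx\ =\ \frac{\pi c_0}{1+\ov\alpha}\,\eps^{2(1+\ov\alpha)}\,e^{\ov\beta(1+\delta)C_\eps^2}.
\]
Since $\ov\beta C_\eps^2=2(1+\ov\alpha)|\log\eps|(1+o(1))$, the right-hand side equals $\tfrac{\pi c_0}{1+\ov\alpha}\,\eps^{-2(1+\ov\alpha)\delta+o(1)}\to+\infty$, which gives $\sup_{\mathcal{H}}E^{\ov\beta,\lambda,q}_{\Sigma,h}=+\infty$.

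The only non-routine points are (a) producing $\phi$ with zero average, support away from $p$, and Rayleigh quotient strictly below $\lambda$ — this is exactly where the strict inequality $\lambda>\lambda_q(\Sigma,g)$ and the vanishing $H^1$-capacity of a point are used — and (b) verifying that the effective exponent $b^2(1+\lambda\|u_\eps\|_q^2)$ on the concentration core exceeds $\ov\beta$, which is the heart of the matter; the rest is standard Moser-function bookkeeping. The subtle technical nuisance in (b) is that subtracting the mean $\ov{v_\eps}$ could a priori spoil the lower bound for $u_\eps^2$, but this is harmless because $\ov{v_\eps}\to 0$ while $\psi_\eps\to+\infty$ uniformly on the core.
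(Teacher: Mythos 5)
Your argument is correct, and for the critical case $\beta=\ov\beta$, $\lambda>\lambda_q(\Sigma,g)$ it takes a genuinely different (and somewhat cleaner) route than the paper's proof of Lemma \ref{infinito}. The paper superposes a cut-off Moser bubble and a scaled eigenfunction, $w_\eps=u_\eps\eta_\eps+t_\eps u_0$ with $t_\eps\to 0$, and must then estimate the cutoff cross-terms $\int|\nabla\eta_\eps|^2u_\eps^2$, $\int u_\eps\eta_\eps\nabla u_\eps\cdot\nabla\eta_\eps$ and $\int\nabla u_0\cdot\nabla(u_\eps\eta_\eps)$ (the last one using the Euler--Lagrange equation for $u_0$), subject to the balancing conditions $t_\eps^2|\log\eps|\to\infty$ and $\log^2 r_\eps/(t_\eps^2|\log\eps|)\to 0$ in \eqref{re}; the resulting exponent gain on $\Omega_\eps$ is the slowly diverging quantity $(\lambda-\lambda_q)t_\eps^2|\log\eps|$. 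You instead take a \emph{fixed} auxiliary function $\phi$ with Rayleigh quotient strictly below $\lambda$, supported disjointly from the bubble, and normalise $a^2\|\nabla\phi\|_2^2+b^2=1$ exactly. Disjoint supports kill all cross-terms, you do not need $\phi$ to be a genuine $\lambda_q$-minimizer (only a near-minimizer obtained by cut-off, zero-average correction and a two-capacity argument, which is indeed harmless), and there is no scaling parameter $t_\eps$: the effective exponent becomes $\ov\beta(1+\delta)C_\eps^2$ with a fixed $\delta>0$, giving the explicit polynomial blow-up $\eps^{-2(1+\ov\alpha)\delta}$. What you lose is the possibility of pushing the argument to the borderline $\lambda=\lambda_q$ discussed in Remark \ref{caso limite}, where the paper's scaling $t_\eps\to 0$ together with the extra hypothesis $u_0(p)\neq 0$ becomes essential; what you gain is a shorter, more transparent computation with no Euler--Lagrange input for $u_0$ and no cutoff estimates. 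Both proofs use the same two inputs --- the sharp Moser profile at a point with $\alpha(p)=\ov\alpha$ and the strict surplus $\lambda>\lambda_q$ to raise the exponent past $\ov\beta$ --- so the mechanism is the same; only the bookkeeping differs. Your treatment of $\beta>\ov\beta$ via monotonicity in $\lambda$ and \eqref{MT Troy} matches the paper's.
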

\begin{proof}
Take $p\in \Sigma$  such that $\alpha(p)=\ov\alpha$ and a local chart $(\Omega,\psi)$  satisfying  \eqref{psi1}-\eqref{psi2}. Let us define  $v_\eps :D_{\delta_0} \ra [0,+\infty)$,
$$
v_\eps(x):=\frac{1}{\sqrt{2\pi}}\begin{Si}{cl}
\sqrt{\log{\frac{\delta_0}{\eps}}} & \; |x|\le \eps\\
\frac{\log\frac{\delta_0}{|x|}}{\sqrt{\log{\frac{\delta_0}{\eps}}} } & \;\eps \le |x|\le \delta_0\\
\end{Si}
$$
and 
$$
u_\eps (x):= \begin{Si}{cl}
v_\eps(\psi (x)) & \; x\in \Omega\\
0 & \; x\in \Sigma \bs \Omega.
\end{Si}
$$
It is simple to verify that
$$
\int_{\Sigma}|\nabla u_\eps|^2 dv_g =  \int_{D_{\delta_0}} |\nabla v_\eps|^2 dx = 1,
$$
thus $u_\eps -\ov{u}_\eps \in \mathcal{H}$.
By direct computation one has 
\begin{equation}\label{mediaueps}
\ov{u}_\eps=O\left( \left(\log \frac{1}{\eps}\right)^{-\frac{1}{2}}\right),
\end{equation} 
hence in $\Omega_\eps$ we have 
$$
(u_\eps -\ov{u}_\eps)^2 =  \frac{1}{2\pi}\log \left(\frac{\delta_0}{\eps}\right) + O(1).
$$
Thus if $\beta>\ov{\beta}$ we have
$$
E_{\Sigma,h}^{\beta,\lambda,q} (u_\eps -\ov{u}_\eps)  \ge E_{\Sigma,h}^{\beta,0,q} (u_\eps -\ov{u}_\eps ) \ge \int_{\Omega_\eps } h e^{\beta(u_\eps -\ov{u}_\eps )^2} dv_g \ge   \frac{C}{\eps^{\frac{\beta}{2\pi}}} \int_{D_\eps} |x|^{2\ov{\alpha}} dx=$$
$$
= \frac{C \pi }{1+\ov\alpha} \eps^{2(1+\ov{\alpha})-\frac{\beta}{2\pi}} 
= \tilde{C} \eps^{\frac{\ov{\beta}-\beta}{2\pi}} \ra +\infty
$$
as $\eps\ra 0$.
For the case $\beta= \ov{\beta}$ and $\lambda> \lambda_q(\Sigma, g)$ we take a function $u_0\in H^1(\Sigma)$ such that 
\begin{equation}\label{autofun}
\begin{Si}{c}
\|\nabla u_0\|_2^2 = \lambda_q (\Sigma,g)\|u_0\|_q^2
\vspace{0.3cm}\\
\int_{\Sigma} u_0 \; dv_g = 0\vspace{0.3cm}\\
\|u_0\|_q^2 =1.
\end{Si}
\end{equation}
This function $u_0$ will also satisfy
\[
-\Delta_g u_0 = \lambda_q \|u_0\|_{q}^{2-q}|u_0|^{q-2} u_0 -c\; 
\]
where 
$$
c = \frac{\lambda_q}{|\Sigma|}\|u_0\|_q^{2-q} \int_{\Sigma}|u_0|^{q-2} u_0\; dv_g.
$$
Let us take $t_\eps, r_\eps \ra 0 $ such that
\begin{equation}\label{re}
t_\eps^2 |\log\eps| \ra +\infty, \qquad
\frac{r_\eps}{\eps}, \ra+\infty \qquad \mbox{and} \qquad \frac{\log ^2r_\eps}{t_\eps^2|\log\eps|} \ra 0. 
\end{equation}

We define
\begin{equation}\label{defweps}
w_\eps:=  u_\eps \eta_\eps + t_\eps u_0
\end{equation}
where $\eta_\eps \in C^\infty_0(\Omega_{2r_\eps})$ is a cut-off function such that $\eta_\eps \equiv 1 $ in $\Omega_{r_\eps}$, $0\le \eta_\eps \le 1$ and   $|\nabla \eta_\eps |=O(r_\eps^{-1})$. 
It is straightforward that 
\begin{equation}\label{mediaweps}
\ov w_\eps=O(|\log\eps|^{-\frac 12}).
\end{equation}
Observe that
$$
\|\nabla w_\eps\|_2^2 = \int_{\Sigma}|\nabla (u_\eps\eta_\eps)|^2  dv_g + t_\eps^2 \|\nabla u_0\|_2^2 + 2 t_\eps \int_{\Sigma} \nabla u_0 \cdot \nabla  (u_\eps \eta_\eps ) dv_g.
$$ 
Using the definition of $u_\eps, \eta_\eps$ and \eqref{re} we find
$$
\int_{\Sigma}|\nabla \eta_\eps|^2 u_\eps^2 dv_g = O(r_\eps^{-2})\int_{\Omega_{2r_\eps}\bs \Omega_{r_\eps}}  u_\eps^2 dv_g = O\left(|\log \eps|^{-1}\log^2 r_\eps \right)= o(t_\eps^2)
$$
and 
$$
\left|\int_{\Sigma} u_\eps \eta_\eps \nabla u_\eps \cdot \nabla \eta_\eps  dv_g \right| \le O(r_\eps^{-1}) \int_{\Omega_{2 r_\eps} \bs \Omega r_\eps} |\nabla u_\eps| u_\eps dv_g =  O(|\log r_\eps| |\log \eps|^{-1}) =o(t_\eps^2).
$$
Thus
$$
\|\nabla (u_\eps \eta_\eps)\|_2^2 = \int_{\Sigma}|\nabla u_\eps|^2 \eta_\eps^2  dv_g + o(t_\eps^2) \le 1+o(t_\eps^2).
$$
Moreover \eqref{autofun} gives $\|\nabla u_0\|_2^2 = \lambda_q$ and 
$$
\left|\int_{\Sigma} \nabla u_0 \cdot \nabla  (u_\eps \eta_\eps ) dv_g\right| =\left| \lambda_q  \int_{\Sigma} (|u_0|^{q-2} u_0-c) \eta_\eps u_\eps dv_g \right|= O(1) \int_{\Sigma} u_\eps dv_g = O(|\log \eps|^{-\frac{1}{2}}) =o(t_\eps).
$$
Hence we have 
$$
\|\nabla w_\eps\|^2_2 \le 1 + \lambda_q t_\eps^2 +o(t_\eps^2). 
$$
Furthermore by dominated convergence we have,
$$
\|w_\eps-\ov{w}_\eps \|_{q}^2\ge t_\eps^2 \left(\int_{\Sigma\bs \Omega_{2 r_\eps}} |u_0-\frac{\ov{w}_\eps}{t_\eps}|^q dv_g\right)^\frac{2}{q} = t_\eps^2 \| u_0\|_q^2 +o(t_\eps^2)= t_\eps^2 +o(t_\eps^2),
$$
thus we get 
$$
\frac{1}{\|\nabla w_\eps\|^2_2} \left(1+\lambda \frac{\|w_\eps-\ov w_\eps\|_q^2}{\|\nabla w_\eps\|_2^2}\right) \ge  1 + (\lambda-\lambda_q)t_\eps^2  +o(t_\eps^2).
$$
Finally, using \eqref{mediaweps}, in $\Omega_\eps$ we find 
\[
\begin{split}
&\frac{4\pi (1+\ov{\alpha})(w_\eps -\ov{w}_\eps)^2}{\|\nabla w_\eps\|^2_2} \left(1+\lambda \frac{\|w_\eps-\ov w_\eps\|_q^2}{\|\nabla w_\eps\|_2^2}\right) \\&
= \left(2(1+\ov{\alpha})|\log\eps|+O(1)\right)  \left(  1 + (\lambda-\lambda_q)t_\eps^2  +o(t_\eps^2)\right)\\&
=  -2(1+\ov{\alpha}) \log \eps + (\lambda -\lambda_q) t_\eps^2 |\log \eps | + O(1),
\end{split}
\]
so that
$$
E_{\Sigma,h}^{\ov\beta, \lambda, q} \left(\frac{w_\eps-\ov{w}_\eps}{\|\nabla w_\eps\|_2}\right) \ge \int_{\Omega_\eps} he^{\frac{4\pi (1+\ov{\alpha})(w_\eps -\ov{w}_\eps)^2}{\|\nabla w_\eps\|^2_2} \left(1+\lambda \frac{\|w_\eps\|_q^2}{\|\nabla w_\eps\|_2^2}\right)} dv_g \ge $$
$$
\ge  c \eps^{-2(1+\ov{\alpha})} e^{ (\lambda-\lambda_q) t_\eps^2|\log \eps|+O(1)} \int_{D_\eps}|y|^{2 \ov{\alpha}} dy = \tilde{c} e^{ (\lambda-\lambda_q) t_\eps^2|\log \eps|} \ra +\infty
$$
as $\eps \to0$. 
\end{proof}

\begin{oss}\label{caso limite}
If there exists a point $p\in \Sigma$ such that $\alpha(p)=\ov\alpha$ and $u_0(p)\neq 0$, then one can argue as in \cite{LuYang} to prove that,
$$
\sup_{\mathcal{H}} E^{\ov{\beta},\lambda,q}_{\Sigma,h} = +\infty
$$
also for $\lambda=\lambda_q(\Sigma,g_0)$. This is always true if $\ov{\alpha}=0$.
\end{oss}


\appendix
\section{Onofri-type Inequalities for Disks.}\label{sez2}

Let $(\Sigma,g)$ be a smooth, closed Riemannian surface. As a consequence of \eqref{MT3} one gets 
\begin{equation}\label{MT sup}
 \log\left(\frac{1}{|\Sigma|} \int_{\Sigma} e^{u-\ov{u}} dv_{g} \right) \le \frac{1}{16\pi} \int_{\Sigma}|\nabla_g u|^2 dv_g + C(\Sigma,g).
\end{equation}
While it is well known that the coefficient $\frac{1}{16\pi}$ is sharp, the optimal value of $C(\Sigma,g)$ is harder to determine. For the special case of the standard Euclidean sphere $(S^2,g_0)$, Onofri  (\cite{onofri}) proved that $C(S^2,g_0)=0$ and gave a complete characterization of the extremal functions for \eqref{MT sup}.
\begin{prop}[\cite{onofri}]\label{Onofri} $\forall\; u\in H^1(S^2)$ we have
$$\log \left(\frac{1}{4\pi} \int_{S^2} e^{u-\ov{u}} dv_{g_0} \right) \le \frac{1}{16\pi }\int_{S_2}|\nabla_{g_0} u|^2 dv_{g_0},$$
with equality holding if and only if $e^{u}g_0$ is a metric on $S^2$ with positive constant Gaussian curvature, or, equivalently, $u=\log |\det d\ph| +c$ with $c\in \R$ and  $\ph:S^2 \ra  S^2$ a conformal diffeomorphism of $S^2$.
\end{prop}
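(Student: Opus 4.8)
The plan is to write the gap between the two sides as a functional that is invariant under the conformal group of $(S^2,g_0)$, to show that this functional is nonnegative by a variational argument, and then to identify its minimisers through the Euler--Lagrange equation together with the classification of constant-curvature metrics conformal to $g_0$.

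First I would reduce to $u\in C^\infty(S^2)$ by density, and, using that both sides are unchanged when a constant is added to $u$, rephrase the claim as $J[u]\ge 0$ for all $u\in H^1(S^2)$, where
\[
J[u]:=\frac{1}{16\pi}\int_{S^2}|\nabla_{g_0}u|^2\,dv_{g_0}+\frac{1}{4\pi}\int_{S^2}u\,dv_{g_0}-\log\Big(\frac{1}{4\pi}\int_{S^2}e^u\,dv_{g_0}\Big),
\]
so that $J[c]=0$ for every constant $c$. The key structural point I would establish is that $J$ is invariant under the conformal group: if $\ph$ is a conformal diffeomorphism of $S^2$ and $w_\ph$ is defined by $\ph^*g_0=e^{w_\ph}g_0$ (so $w_\ph$ solves a Liouville equation and $\frac{1}{4\pi}\int_{S^2}e^{w_\ph}\,dv_{g_0}=1$), then a change of variables combined with the equation for $w_\ph$ yields $J[u\circ\ph+w_\ph]=J[u]$; in particular $J$ vanishes on the whole conformal orbit of the constants, which will later pin down the equality cases.

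Next I would run the variational argument. Set $\Lambda:=\inf_{u\in H^1(S^2)}J[u]$. The non-sharp Onofri-type bound \eqref{MT sup} gives $\Lambda\ge -C(S^2,g_0)>-\infty$, and $J[0]=0$ gives $\Lambda\le 0$. Assume for contradiction $\Lambda<0$ and pick a minimising sequence $u_n$ with $\int_{S^2}u_n\,dv_{g_0}=0$. If the probability measures $\frac{1}{4\pi}e^{u_n}\,dv_{g_0}$ stay away from concentration, the compactness of the Moser--Trudinger embedding below its critical threshold produces a minimiser $u_*$; if instead they concentrate at a point $p\in S^2$, a blow-up analysis (rescaling near $p$ in stereographic coordinates, the rescaled functions converging to a finite-mass solution of a Liouville equation on $\R^2$) shows $\liminf_n J[u_n]\ge 0$, contradicting $\Lambda<0$. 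Hence a minimiser $u_*$ exists; it solves $-\Delta_{g_0}u_*=8\pi\frac{e^{u_*}}{\int_{S^2}e^{u_*}\,dv_{g_0}}-2$, i.e.\ the metric $\ti g:=\frac{4\pi}{\int_{S^2}e^{u_*}\,dv_{g_0}}\,e^{u_*}g_0$ has constant Gauss curvature $+1$. By uniformisation together with the conformal rigidity of $S^2$ (any unit-curvature metric conformal to $g_0$ is isometric to $g_0$, and the isometry is automatically a conformal diffeomorphism), one gets $u_*=w_\ph+\mathrm{const}$ for some conformal diffeomorphism $\ph$; then the conformal and translation invariance of $J$ force $J[u_*]=J[0]=0$, contradicting $J[u_*]=\Lambda<0$. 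Therefore $\Lambda=0$, which is the asserted inequality, and if $J[u]=0$ then $u$ is a minimiser, so the same chain shows $e^u g_0$ has constant positive Gauss curvature, equivalently $u=\log|\det d\ph|+c$.

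The main obstacle I anticipate is the quantitative treatment of concentrating minimising sequences, namely proving that a blowing-up minimiser cannot beat the constants. A route that bypasses this is the \emph{competing symmetries} argument of Carlen--Loss: transfer $J$ to a functional on $\R^2$ via stereographic projection and then alternately apply symmetric decreasing rearrangement (which does not increase the transferred functional, by the P\'olya--Szeg\H{o} and Hardy--Littlewood inequalities) and pull-back by a fixed non-rotational conformal map (which leaves it invariant); the iterates converge in $L^1$ to a joint fixed point, necessarily constant, whence $J[u]\ge 0$. This avoids the blow-up analysis at the cost of a more delicate discussion of when equality is strict.
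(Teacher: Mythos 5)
The paper does not actually prove Proposition \ref{Onofri}: it is stated as a citation to \cite{onofri} and used as a black box (it is the input to the paper's proof of Proposition \ref{OnofrisuD} via stereographic projection), so there is no internal proof to compare against. Judged on its own, your proposal has a genuine gap in its main line of argument. The variational scheme reduces everything to the dichotomy ``either a minimiser exists or the minimising sequence concentrates,'' and you then assert that concentration forces $\liminf_n J[u_n]\ge 0$. That assertion is exactly the hard content of the theorem: it is a Carleson--Chang-type estimate at the Onofri level, and nothing you write supplies it. (Compare Section \ref{sez3} of the paper, where establishing the analogous statement on the disk requires the scaled Onofri bound of Corollary \ref{cor CC}, the decay estimate of Lemma \ref{decay}, and the splitting in Lemmas \ref{disketto} and \ref{raggio CC} --- that chain cannot be invoked here because the paper \emph{derives} those tools from Proposition \ref{Onofri} itself, so using them would be circular.) You acknowledge the obstacle, but acknowledging it does not close it, so as written the first route is incomplete. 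The non-concentrating branch of your dichotomy (EL equation, uniformisation, conformal rigidity of $S^2$, and the conformal invariance identity $J[u\circ\ph+w_\ph]=J[u]$) is sound.

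Your fallback, the Carlen--Loss competing-symmetries argument, is a legitimate and genuinely different route that avoids blow-up analysis altogether. However, it is only gestured at: you would still need to specify the fixed non-rotational conformal map, verify that rearrangement strictly decreases the transferred functional off the fixed-point set, prove convergence of the alternating iteration, and --- as you note --- sharpen the rearrangement step to extract the equality cases, which is delicate because P\'olya--Szeg\H{o} equality cases are not immediate. As a proof proposal it identifies the right landmarks, but neither route is carried far enough to constitute a proof.
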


We will prove now Proposition \ref{OnofrisuD} 
by means of the stereographic projection.

\begin{proof}
Let us fix Euclidean coordinates  $(x_1,x_2,x_3)$ on $S^2\subseteq \R^3$ and denote $N:=(0,0,1)$ and $S=(0,0,-1)$ the north and the south pole. Let us consider the stereographic projection $\pi:S^{2}\bs\{N\}\ra\R^2$
$$
\pi(x):=\left(\frac{x_1}{1-x_3},\frac{x_2}{1-x_3}\right).
$$
It is well known that $\pi$ is a conformal diffeomorphism and 
\begin{equation}\label{pullbb}
 \left(\pi^{-1}\right)^* g_0 = e^{u_0} |dx|^2
\end{equation}
where
\begin{equation}\label{u0}
u_0(x) = \log\left( \frac{4}{\left(1+|x|^2\right)^2}\right)\end{equation}
satisfies
\begin{equation}\label{eq u0}
-\Delta u_0 = 2 e^{u_0}\qquad \mbox{ on }\R^2.
\end{equation}

Given $r>0$, let $D_r:=\{x\in \R^2\;:\; |x|<r\}$ be the disk of radius $r$ and $S_r^2 = \pi^{-1}(D_r)$. We consider the map $T_r:H^1_0(D_r)\ra H^1(S^2)$ defined by
$$
T_r u(x):= \begin{Si}{cc}
u( \pi(x))-u_0(\pi(x)) & \mbox{ on } S^2_r \\
-2\log\left(\frac{2}{1+r^2}\right) & \mbox{ on } S^2\bs S^2_r.
\end{Si}
$$ 
Using \eqref{pullbb}  we find
\begin{equation}\label{exp}
\int_{S^2}  e^{T_r u} dv_{g_0} \ge \int_{S^2_r}  e^{T_r u} dv_{g_0}= \int_{D_r}  e^{T_r u ( \pi^{-1}(y))} e^{u_0} dy= \int_{D_r} e^{u(y)} dy.
\end{equation}
Moreover, by \eqref{eq u0},
\[
\begin{split}
\int_{S^2_r} |\nabla T_r u|^2dv_{g_0}& = \int_{D_r}|\nabla u|^2 dx-2\int_{D_r} \nabla u_0 \cdot \nabla u\; dy +\int_{D_r} |\nabla u_0|^2 dy  \\&
=  \int_{D_r}|\nabla u|^2 dy- 4 \int_{D_r} u e^{u_0}dy +\int_{D_r} |\nabla u_0|^2dy \\&
= \int_{D_r}|\nabla u|^2 dy -4 \int_{S^2_r} T_r u \; dv_{g_0} + \left(  \int_{D_r} |\nabla u_0|^2 dy -4\int_{D_r} u_0 e^{u_0} dy \right).  
\end{split}
\]
It is easy to check with a direct computation that one has 
$$
\int_{D_r} |\nabla u_0|^2 dy = 16\pi\left( \log(1+r^2)-\frac{r^2}{1+r^2} \right)
$$
and
$$
\int_{D_r} u_0 e^{u_0} dy= 8\pi \log 2- 8\pi+o(1), 
$$
where $o(1) \ra 0$ as $r\to +\infty$. 
Thus we get
\begin{equation}\label{grad}
\begin{split}
&\int_{S^2}|\nabla T_ru|^2\, dv_{g_0} +4 \int_{S^2} T_ru \, dv_{g_0} \\&
=\int_{D_r}|\nabla u|^2 dy+  16\pi \left( \log(1+r^2)  +1 -2 \log 2 +o(1)\right).
\end{split}
\end{equation}
Using \eqref{exp}, \eqref{grad} and Proposition \ref{Onofri} we can conclude 
\begin{equation}\label{Dr}
\begin{split}
\log\left( \frac{1}{\pi} \int_{D_r} e^{u} dy \right)&\le  \log \left(\frac{1}{\pi}  \int_{S^2}  e^{T_r u} dv_{g_0}  \right) \\&
\le  \frac{1}{16\pi} \left( \int_{S^2} |\nabla Tu|^2 dv_{g_0} + 2 \int_{S^2} Tu\,  dv_{g_0} \right)  + 2\log 2 \\&
\le \frac{1}{16 \pi}\int_{D_r} |\nabla u|^2 dy +   \log(1+r^2)+1 +o(1).
\end{split}
\end{equation}
Now if $u\in H^1_0(D)$ we can apply \eqref{Dr} to $u_r(y) = u(\frac{y}{r})$ and since 
$$
\int_{D} e^{u} dx = \frac{1}{r^2} \int_{D_r}e^{u_r(y)}  dy \qquad \mbox{and} \qquad\int_D |\nabla u|^2 dx = \int_{D_r}|\nabla u_r|^2 dy,
$$ we find
$$
\log\left(\frac{1}{\pi}\int_{D}  e^{u} dx \right)\le \frac{1}{16\pi}\int_{D} |\nabla u|^2 dx + 1 +o(1).
$$
As $r\to \infty$ we get the conclusion. 
\end{proof}

As in \cite{adim}, starting from \eqref{Onofri Disco} we can use a simple change of variables to obtain singular Onofri-type inequalities for the unit disk.

\begin{prop}\label{prop2prop3}
Let $-1<\alpha\leq 0$.  Then for any $u\in H^1_0(D)$ we have
\begin{equation}\label{eqprop3}
\log\left( \frac{1+\alpha}{\pi} \int_{D} |x|^{2\alpha} e^{u} dx\right) \le \frac{1}{16\pi(1+\alpha)}\int_{D}|\nabla u|^2 \, dx +  1.
\end{equation}
Moreover, if we restrict ourselves to the space $H^1_{0,rad}(D)$, we have that \eqref{eqprop3} holds true for any $\alpha\in (-1,+\infty]$. 
\end{prop}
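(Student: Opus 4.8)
The plan is to derive both assertions from the non-singular Onofri inequality on the disk (Proposition~\ref{OnofrisuD}, i.e.\ \eqref{Onofri Disco}) by means of the radial change of variables $s=r^{1+\alpha}$, in the spirit of \cite{adim}, and then to remove the radial restriction by symmetrization.

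\textbf{Step 1 (radial case, any $\alpha>-1$).} Given a radial $u\in H^1_{0,rad}(D)$, define $v\in H^1_{0,rad}(D)$ by $v(x):=u\big(|x|^{1/(1+\alpha)}\big)$, where on the right $u$ denotes the radial profile of $u$. Since $r\mapsto r^{1/(1+\alpha)}$ is an increasing homeomorphism of $[0,1]$ fixing the endpoints, $v$ is again radially decreasing and vanishes on $\partial D$; carrying out the substitution $r=s^{1/(1+\alpha)}$ in the polar-coordinate integrals $2\pi\int_0^1(\,\cdot\,)\,r\,dr$ yields
$$\int_D e^{v}\,dx=(1+\alpha)\int_D |x|^{2\alpha}e^{u}\,dx,\qquad \int_D|\nabla v|^2\,dx=\frac{1}{1+\alpha}\int_D|\nabla u|^2\,dx,$$
the second identity showing in particular that $v\in H^1_0(D)$. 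Applying \eqref{Onofri Disco} to $v$ and rewriting both sides via these two identities produces exactly \eqref{eqprop3}. The sign of $\alpha$ is never used, so this already establishes the last assertion of the proposition.

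\textbf{Step 2 (general $u$, $-1<\alpha\le 0$).} For arbitrary $u\in H^1_0(D)$, replacing $u$ by $|u|$ can only increase $\int_D|x|^{2\alpha}e^{u}\,dx$ and leaves $\int_D|\nabla u|^2\,dx$ unchanged, so we may assume $u\ge 0$. Let $u^*$ be its symmetric decreasing rearrangement. Then $u^*\in H^1_{0,rad}(D)$, one has $\int_D|\nabla u^*|^2\,dx\le\int_D|\nabla u|^2\,dx$ by \eqref{poly}, and---this is the one place $\alpha\le 0$ is needed---inequality \eqref{dec} gives $\int_D|x|^{2\alpha}e^{u}\,dx\le\int_D|x|^{2\alpha}e^{u^*}\,dx$. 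Applying Step 1 to $u^*$ and chaining these three inequalities yields \eqref{eqprop3} for $u$.

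\textbf{Expected obstacle.} There is no real difficulty: the only points to watch are the exact placement of the factors $(1+\alpha)^{\pm1}$ in the change of variables and the verification that $v$ still lies in $H^1_0(D)$, which is immediate from the Dirichlet-energy identity together with the fact that $r\mapsto r^{1/(1+\alpha)}$ is a diffeomorphism near $\partial D$. The hypothesis $\alpha\le 0$ in the non-radial case is essential for this method, since \eqref{dec} fails for $\alpha>0$, as remarked right after its statement; this is why the inequality is only claimed for $H^1_{0,rad}(D)$ when $\alpha>0$.
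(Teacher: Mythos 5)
Your proof is correct and follows essentially the same route as the paper: the radial change of variables $v(x)=u(|x|^{1/(1+\alpha)})$ reduces the singular inequality to \eqref{Onofri Disco}, and symmetric decreasing rearrangement (valid for $\alpha\le 0$ via \eqref{poly} and \eqref{dec}) removes the radial restriction. Your write-up is slightly more careful than the paper's in two harmless respects: you record the two change-of-variables identities explicitly, and you pass to $|u|$ before rearranging, a step the paper leaves implicit.
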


\begin{proof}
As we did in the proof of Proposition \ref{CC radial}, for $u\in H^1_{0,rad}(D)$ we consider the function $v(x)=u(|x|^{\frac{1}{1+\alpha}})$, which is again in $H^1_{0,rad}(D)$. The second claim follows at once applying \eqref{Onofri Disco} to $v$. 
As for the first claim, if $\alpha\le 0$ we can use again symmetric rearrangements as we did in the proof of Theorem \ref{teo 1} to remove the symmetry assumption.


\end{proof}

%

Since 
$$
\int_{D} |x|^{2\alpha} dx = \frac{\pi}{1+\alpha},
$$ Proposition \ref{prop2prop3} can be written in a simpler form in terms of the singular metric $g_\alpha=|x|^{2\alpha} |dx|^2$. 
\begin{cor} If $u\in H^{1}_0(D)$ and $-1< \alpha\le 0$ (or $\alpha>0$ and $u\in H^1_{0,rad}(D)$), we have 
$$
\log\left( \frac{1}{|D|_\alpha} \int_{D} e^{u} dv_{g_\alpha}\right) \le \frac{1}{16\pi(1+\alpha)}\int_{D}|\nabla u|^2 dv_{g_\alpha} +  1 
$$
where $|D|_{\alpha}= \frac{\pi}{(1+\alpha)}$ is the measure of $D$ with respect to $g_\alpha$. 
\end{cor}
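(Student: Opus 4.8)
The plan is to observe that the stated corollary is nothing more than a reformulation of Proposition \ref{prop2prop3} once the notation for the singular metric $g_\alpha=|x|^{2\alpha}|dx|^2$ is unwound. First I would record the two elementary identities attached to $g_\alpha$. The associated volume form is $dv_{g_\alpha}=\sqrt{\det g_\alpha}\,dx=|x|^{2\alpha}\,dx$, so that
$$
\int_D e^u\,dv_{g_\alpha}=\int_D |x|^{2\alpha}e^u\,dx,
$$
and a direct computation in polar coordinates gives
$$
|D|_\alpha=\int_D|x|^{2\alpha}\,dx=2\pi\int_0^1 r^{2\alpha+1}\,dr=\frac{\pi}{1+\alpha},
$$
which is finite precisely because $\alpha>-1$. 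Thus the left-hand side of the claimed inequality equals $\log\!\left(\frac{1+\alpha}{\pi}\int_D|x|^{2\alpha}e^u\,dx\right)$.

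Next I would treat the Dirichlet term, where the only (still standard) point is the conformal invariance of the Dirichlet integral in dimension two. For $g_\alpha=|x|^{2\alpha}|dx|^2$ one has $g_\alpha^{ij}=|x|^{-2\alpha}\delta^{ij}$, hence $|\nabla_{g_\alpha}u|_{g_\alpha}^2=|x|^{-2\alpha}|\nabla u|^2$ with $|\nabla u|$ the Euclidean gradient; multiplying by $dv_{g_\alpha}=|x|^{2\alpha}\,dx$ the weight cancels and
$$
\int_D |\nabla u|^2\,dv_{g_\alpha}=\int_D|\nabla u|^2\,dx.
$$
Therefore the right-hand side of the asserted inequality is exactly $\frac{1}{16\pi(1+\alpha)}\int_D|\nabla u|^2\,dx+1$.

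Combining these observations, the inequality to be proved reads
$$
\log\!\left(\frac{1+\alpha}{\pi}\int_D|x|^{2\alpha}e^u\,dx\right)\le\frac{1}{16\pi(1+\alpha)}\int_D|\nabla u|^2\,dx+1,
$$
which is precisely \eqref{eqprop3}. For $-1<\alpha\le0$ this holds for every $u\in H^1_0(D)$ by the first part of Proposition \ref{prop2prop3}, while for $\alpha>0$ it holds for $u\in H^1_{0,rad}(D)$ by the second part; in either case the conclusion is immediate. Since the whole argument is a change of notation, there is no real obstacle: the only items needing a line of verification are the elementary integral $\int_D|x|^{2\alpha}\,dx=\pi/(1+\alpha)$ and the two-dimensional conformal invariance of the Dirichlet energy, both of which are classical.
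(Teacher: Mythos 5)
Your proof is correct and follows exactly the intended approach: the corollary is a direct reformulation of Proposition \ref{prop2prop3}, using $dv_{g_\alpha}=|x|^{2\alpha}\,dx$, the two-dimensional conformal invariance of the Dirichlet energy, and the elementary computation $|D|_\alpha=\pi/(1+\alpha)$. The paper itself introduces the corollary with precisely this observation and gives no further proof.
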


We stress that the constant 1 appearing in Proposition \ref{prop2prop3} is sharp.   
\begin{prop}\label{inf}
For any $-1<\alpha\leq 0$ we have
$$
\inf_{u\in H^1_{0}(D)} \frac{1}{16\pi (1+\alpha)} \int_D |\nabla u|^2 dx -\log\left(\frac{1}{|D|_\alpha}\int_{D} |x|^{2\alpha}e^{u} dx \right) =-1.
$$
Moreover, if we restrict ourselves to the space $H^1_{0,rad}(D)$, the conclusion above holds true for any $\alpha\in (-1,+\infty)$. 
\end{prop}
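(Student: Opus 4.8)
The plan is to prove the two inequalities $\ge -1$ and $\le -1$ separately. The lower bound is already in hand: Proposition \ref{prop2prop3} states precisely that
$$
\frac{1}{16\pi(1+\alpha)}\int_D|\nabla u|^2\,dx-\log\left(\frac{1}{|D|_\alpha}\int_D|x|^{2\alpha}e^u\,dx\right)\ge -1
$$
for every $u\in H^1_0(D)$ when $-1<\alpha\le 0$, and for every $u\in H^1_{0,rad}(D)$ when $\alpha>-1$. Hence in both settings the infimum is $\ge -1$, and it remains only to exhibit an admissible minimizing sequence.

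For the reverse bound I would use the radial change of variables already employed in the proof of Proposition \ref{prop2prop3} to reduce to the case $\alpha=0$. If $v\in H^1_{0,rad}(D)$, then $u(x):=v(|x|^{1+\alpha})$ again belongs to $H^1_{0,rad}(D)$, vanishes on $\partial D$, and a direct computation in polar coordinates gives $\int_D|\nabla u|^2\,dx=(1+\alpha)\int_D|\nabla v|^2\,dx$ together with $(1+\alpha)\int_D|x|^{2\alpha}e^{u}\,dx=\int_D e^{v}\,dx$; since $|D|_\alpha=\frac{\pi}{1+\alpha}$ this yields
$$
\frac{1}{16\pi(1+\alpha)}\int_D|\nabla u|^2\,dx-\log\left(\frac{1}{|D|_\alpha}\int_D|x|^{2\alpha}e^{u}\,dx\right)=\frac{1}{16\pi}\int_D|\nabla v|^2\,dx-\log\left(\frac1\pi\int_D e^{v}\,dx\right).
$$
Since these $u$ are radial, they are admissible competitors both for the infimum over $H^1_0(D)$ (when $\alpha\le 0$) and for the infimum over $H^1_{0,rad}(D)$ (for every $\alpha>-1$). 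Thus it suffices to find radial $v_\eps\in H^1_0(D)$ such that the right-hand side above tends to $-1$ as $\eps\to 0$.

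Here I would take the rescaled Liouville bubbles
$$
v_\eps(x):=\log\frac{(1+\eps^2)^2}{(\eps^2+|x|^2)^2}=u_0(x/\eps)+\log\frac{(1+\eps^2)^2}{4\eps^4},
$$
with $u_0$ as in \eqref{u0}; these are smooth on $\ov{D}$ and vanish on $\partial D$, so $v_\eps\in H^1_{0,rad}(D)$. An elementary computation gives $\int_D|\nabla v_\eps|^2\,dx=32\pi\log\frac1\eps-16\pi+o(1)$ and $\frac1\pi\int_D e^{v_\eps}\,dx=\frac{1+\eps^2}{\eps^2}$, so $\log\left(\frac1\pi\int_D e^{v_\eps}\,dx\right)=2\log\frac1\eps+o(1)$, and therefore the right-hand side of the displayed identity, evaluated at $v_\eps$, equals $2\log\frac1\eps-1-2\log\frac1\eps+o(1)=-1+o(1)$. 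Combining this with the lower bound gives the claimed value $-1$ of the infimum, over $H^1_0(D)$ when $-1<\alpha\le 0$ and over $H^1_{0,rad}(D)$ for all $\alpha>-1$.

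There is no serious conceptual difficulty here; the points that need care are (i) tracking the $o(1)$ terms in the two explicit integrals precisely enough to recover the \emph{sharp} constant $-1$ rather than a larger value, and (ii) applying the radial change of variables only to radially symmetric functions, since for $\alpha>0$ inequality \eqref{eqprop3}, and hence the value $-1$, genuinely fails on the non-radial part of $H^1_0(D)$, as noted after \eqref{dec}.
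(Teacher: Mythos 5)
Your proof is correct. The lower bound is exactly Proposition~\ref{prop2prop3}, as you say, and the change of variables $u(x)=v(|x|^{1+\alpha})$ is verified by the same polar-coordinate computation that underlies Proposition~\ref{CC radial}: $\int_D|\nabla u|^2\,dx=(1+\alpha)\int_D|\nabla v|^2\,dx$ and $(1+\alpha)\int_D|x|^{2\alpha}e^u\,dx=\int_D e^v\,dx$, so $E_\alpha(u)=E_0(v)$ on radial functions. Your bubble computations check out: $\frac1\pi\int_D e^{v_\eps}\,dx=\frac{1+\eps^2}{\eps^2}$ and $\int_D|\nabla v_\eps|^2\,dx=32\pi\log\frac1\eps-16\pi+o(1)$, which gives $E_0(v_\eps)\to-1$, and since the $u_\eps$ produced from $v_\eps$ by the change of variables are radial they are admissible in both settings.

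The route differs from the paper's in one respect worth noting. The paper keeps the singular weight throughout and uses a \emph{truncated} bubble: a standard Liouville profile on $|x|\le\gamma_\eps\eps$ glued to the Green-function tail $-4(1+\alpha)\log|x|$ on the annulus, with a matching constant $L_\eps$ tuned so that the resulting $u_\eps$ lies in $H^1_0(D)$. This forces one to carry the cut-off scale $\gamma_\eps$ and verify that the matching errors vanish. Your approach instead pushes the singular exponent into the change of variables — which was already in play for the lower bound — and observes that the untruncated rescaled bubble $v_\eps(x)=\log\frac{(1+\eps^2)^2}{(\eps^2+|x|^2)^2}$ already vanishes on $\partial D$, so no truncation or matching is needed; only two closed-form integrals have to be evaluated. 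This is slightly cleaner for the present purpose, though the paper's truncated construction is chosen to parallel the more delicate test-function family $w_\eps$ of \eqref{definizionewepsilon} in Section~\ref{sez6}, where the external tail genuinely matters. Both give the sharp constant $-1$ and the arguments are interchangeable here.
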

\begin{proof}
Let us denote 
$$E_\alpha(u):=\frac{1}{16\pi (1+\alpha)} \int_D |\nabla u|^2 dx -\log\left(\frac{1}{|D|_\alpha}\int_{D} |x|^{2\alpha}e^{u} dv_g \right). 
$$ 
It is sufficient to exhibit a family of functions $u_\eps\in H^1_{0,rad}(D)$ such that $E_\alpha (u_\eps) \stackrel{\eps \to 0}{\ra}-1$. Take  $\gamma_\eps \stackrel{\eps\to 0}{\ra} +\infty$ such that $\eps \gamma_\eps \stackrel{\eps\to 0}{\ra}0$, and define 
$$
u_\eps (x)= \begin{Si}{cl}
-2\log\left(1+\left(\frac{|x|}{\eps}\right)^{2(1+\alpha)}\right) + L_\eps & \mbox{ for }|x|\le \gamma_\eps \eps\\
-4(1+\alpha)\log|x| & \mbox{ for } \gamma_\eps \eps \le |x|\le 1
\end{Si}
$$
where the quantity 
$$\dis{L_\eps}:= 2\log \left( \frac{1+\gamma_\eps^{2(1+\alpha)}}{\gamma_\eps^{2(1+\alpha)}}\right)-4(1+\alpha)\log \eps$$ 
is chosen so that $u_\eps \in H^1_0(D)$. Simple computations show that
\[
\frac{1}{16\pi(1+\alpha)}\int_{D} |\nabla u_\eps|^2 dx
=-1-2(1+\alpha)\log \eps +o_\eps(1)
\]
and 
\[
\begin{split}
\int_{D} |x|^{2\alpha} e^{u_\eps} dx
&=\frac{\eps^{2(1+\alpha)} \gamma_\eps^{2(1+\alpha)}e^{L_\eps} \pi}{(1 + \alpha) (1 + \gamma_\eps^{2 (1 + \alpha)})} +\frac{\pi}{1+\alpha} \left(\frac{1}{(\gamma_\eps \eps)^{2(1+\alpha)}}-1\right)\\&
= \frac{\pi \eps^{-2(1+\alpha)}}{1+\alpha} (1+o_\eps(1)).
\end{split}
\]
Thus 
$$
E_\alpha(u_\eps) \ra -1. 
$$
\end{proof}

To conclude we remark that Propositions \ref{prop2prop3} and \ref{inf} can also be deduced directly using the singular versions of Proposition \ref{Onofri} proved in \cite{mio2}, \cite{mio4}.

\bibliographystyle{plain}
\bibliography{bib}

\begin{thebibliography}{10}

\bibitem{AdDr}
Adimurthi and O.~Druet.
\newblock Blow-up analysis in dimension 2 and a sharp form of
  {T}rudinger-{M}oser inequality.
\newblock {\em Comm. Partial Differential Equations}, 29(1-2):295--322, 2004.

\bibitem{adim}
Adimurthi and K.~Sandeep.
\newblock A singular {M}oser-{T}rudinger embedding and its applications.
\newblock {\em NoDEA Nonlinear Differential Equations Appl.}, 13(5-6):585--603,
  2007.

\bibitem{Beck}
William Beckner.
\newblock Sharp {S}obolev inequalities on the sphere and the
  {M}oser-{T}rudinger inequality.
\newblock {\em Ann. of Math. (2)}, 138(1):213--242, 1993.

\bibitem{CC}
Lennart Carleson and Sun-Yung~A. Chang.
\newblock On the existence of an extremal function for an inequality of {J}.\
  {M}oser.
\newblock {\em Bull. Sci. Math. (2)}, 110(2):113--127, 1986.

\bibitem{ChenMT}
Wen~Xiong Chen.
\newblock A {T}r\"udinger inequality on surfaces with conical singularities.
\newblock {\em Proc. Amer. Math. Soc.}, 108(3):821--832, 1990.

\bibitem{clChenLi}
Wen~Xiong Chen and Congming Li.
\newblock Classification of solutions of some nonlinear elliptic equations.
\newblock {\em Duke Math. J.}, 63(3):615--622, 1991.

\bibitem{Csa2}
G.~Csatò and P.~Roy.
\newblock The singular moser-trudinger inequality on simply connected domains.
\newblock {\em preprint}, http://arxiv.org/pdf/1507.00323v1.pdf, 2015.

\bibitem{Csa}
G.~Csatò and P.~Roy.
\newblock Extremal functions for the singular moser-trudinger inequality in 2
  dimensions.
\newblock {\em Calc. Var. Partial Differential Equations},
  10.1007/s00526-015-0867-5, to appear.

\bibitem{Flu}
Martin Flucher.
\newblock Extremal functions for the {T}rudinger-{M}oser inequality in {$2$}
  dimensions.
\newblock {\em Comment. Math. Helv.}, 67(3):471--497, 1992.

\bibitem{fontana}
Luigi Fontana.
\newblock Sharp borderline {S}obolev inequalities on compact {R}iemannian
  manifolds.
\newblock {\em Comment. Math. Helv.}, 68(3):415--454, 1993.

\bibitem{kesa}
S.~Kesavan.
\newblock {\em Symmetrization \& applications}, volume~3 of {\em Series in
  Analysis}.
\newblock World Scientific Publishing Co. Pte. Ltd., Hackensack, NJ, 2006.

\bibitem{Li3}
Yu~Xiang Li.
\newblock Remarks on the extremal functions for the {M}oser-{T}rudinger
  inequality.
\newblock {\em Acta Math. Sin. (Engl. Ser.)}, 22(2):545--550, 2006.

\bibitem{Li1}
Yuxiang Li.
\newblock Moser-{T}rudinger inequality on compact {R}iemannian manifolds of
  dimension two.
\newblock {\em J. Partial Differential Equations}, 14(2):163--192, 2001.

\bibitem{Li2}
Yuxiang Li.
\newblock Extremal functions for the {M}oser-{T}rudinger inequalities on
  compact {R}iemannian manifolds.
\newblock {\em Sci. China Ser. A}, 48(5):618--648, 2005.

\bibitem{Lin}
Kai-Ching Lin.
\newblock Extremal functions for {M}oser's inequality.
\newblock {\em Trans. Amer. Math. Soc.}, 348(7):2663--2671, 1996.

\bibitem{Lions}
P.-L. Lions.
\newblock The concentration-compactness principle in the calculus of
  variations. {T}he limit case. {I}.
\newblock {\em Rev. Mat. Iberoamericana}, 1(1):145--201, 1985.

\bibitem{LuYang}
Guozhen Lu and Yunyan Yang.
\newblock Sharp constant and extremal function for the improved
  {M}oser-{T}rudinger inequality involving {$L^p$} norm in two dimension.
\newblock {\em Discrete Contin. Dyn. Syst.}, 25(3):963--979, 2009.

\bibitem{mio2}
G.~Mancini.
\newblock Onofri-type inequalities for singular liouville equations.
\newblock {\em Journal of Geometric Analysis}, 2015.

\bibitem{mio4}
G.~Mancini.
\newblock Singular liouville equations on {$S^2$}: Sharp inequalities and
  existence results.
\newblock {\em preprint}, 2015.

\bibitem{mos}
J.~Moser.
\newblock A sharp form of an inequality by {N}. {T}rudinger.
\newblock {\em Indiana Univ. Math. J.}, 20:1077--1092, 1970/71.

\bibitem{onofri}
E.~Onofri.
\newblock On the positivity of the effective action in a theory of random
  surfaces.
\newblock {\em Comm. Math. Phys.}, 86(3):321--326, 1982.

\bibitem{praj-tar}
J.~Prajapat and G.~Tarantello.
\newblock On a class of elliptic problems in {${\Bbb R}^2$}: symmetry and
  uniqueness results.
\newblock {\em Proc. Roy. Soc. Edinburgh Sect. A}, 131(4):967--985, 2001.

\bibitem{stru}
Michael Struwe.
\newblock Critical points of embeddings of {$H^{1,n}_0$} into {O}rlicz spaces.
\newblock {\em Ann. Inst. H. Poincar\'e Anal. Non Lin\'eaire}, 5(5):425--464,
  1988.

\bibitem{struL1}
Michael Struwe.
\newblock Positive solutions of critical semilinear elliptic equations on
  non-contractible planar domains.
\newblock {\em J. Eur. Math. Soc. (JEMS)}, 2(4):329--388, 2000.

\bibitem{Tint}
Cyril Tintarev.
\newblock Trudinger-{M}oser inequality with remainder terms.
\newblock {\em J. Funct. Anal.}, 266(1):55--66, 2014.

\bibitem{troyanov}
Marc Troyanov.
\newblock Prescribing curvature on compact surfaces with conical singularities.
\newblock {\em Trans. Amer. Math. Soc.}, 324(2):793--821, 1991.

\bibitem{Trud}
Neil~S. Trudinger.
\newblock On imbeddings into {O}rlicz spaces and some applications.
\newblock {\em J. Math. Mech.}, 17:473--483, 1967.

\bibitem{Yang-C}
Yunyan Yang.
\newblock A sharp form of the {M}oser-{T}rudinger inequality on a compact
  {R}iemannian surface.
\newblock {\em Trans. Amer. Math. Soc.}, 359(12):5761--5776 (electronic), 2007.

\bibitem{Yang}
Yunyan Yang.
\newblock Extremal functions for {T}rudinger-{M}oser inequalities of
  {A}dimurthi-{D}ruet type in dimension two.
\newblock {\em J. Differential Equations}, 258(9):3161--3193, 2015.

\end{thebibliography}

\end{document}